\def \To{\longrightarrow}
\def \dim{\operatorname{dim}}
\def \Q{\operatorname{Q}}
\def \bi{\bowtie}
\def \q{\mathbbm{q}}
\def \dim{\operatorname{dim}}
\def \Hom{\operatorname{Hom}}
\def \ord{\operatorname{ord}}
\def \k{\mathbbm{k}}
\def \C{\mathcal{C}}
\def \D{\Delta}
\def \d{\delta}
\def \dl{\d_{_L}}
\def \dr{\d_{_R}}
\def \e{\varepsilon}
\def \M{\mathrm{M}}
\def \N{\mathbbm{N}}
\def \unit{\mathbbm{1}}
\def \gr{\operatorname{gr}}
\numberwithin{equation}{section}
\newtheorem{theorem}{Theorem}[section]
\newtheorem{lemma}[theorem]{Lemma}
\newtheorem{proposition}[theorem]{Proposition}
\newtheorem{corollary}[theorem]{Corollary}
\newtheorem{definition}[theorem]{Definition}
\newtheorem{remark}[theorem]{Remark}
\newtheorem{problem}[theorem]{Problem}
\begin{document}
\title{Quasi-Frobenius-Lusztig kernels for simple Lie algebras}
\author{Gongxiang Liu}
\address{Department of Mathematics, Nanjing University, Nanjing 210093, China} \email{gxliu@nju.edu.cn}
\author{Fred Van Oystaeyen}
\address{Department of Mathematics and Computer Science, University of Antwerp, Antwerp, Belgium}
 \email{fred.vanoystaeyen@ua.ac.be}
\author{Yinhuo Zhang}
\address{Department of Mathematics and Statistics, University of Hasselt, 3590 Diepenbeek, Belgium}
\email{yinhuo.zhang@uhasselt.be}
\maketitle

\begin{abstract} In \cite{Liu}, the quasi-Frobenius-Lusztig kernel associated with $\mathfrak{sl}_{2}$ was constructed. In this paper
 we construct the  quasi-Frobenius-Lusztig kernel for any simple Lie algebra $\mathfrak{g}$.
\end{abstract}

\section{Introduction}

Considering a product on representations of an algebra, an idea useful in physics, leads to the consideration of a coproduct on the algebra and hence to the study of a bialgebra, or more in particular, a Hopf algebra structure.   The theory of algebraic groups is dual to the theory of commutative and cocommutative Hopf algebras. More general Hopf algebras then fit in a  theory of quantum groups as defined by Drinfeld \cite{Dri1,Dri2}, Jimbo \cite{Ji}, Lusztig \cite{Lus,Lus1} and others. By allowing non-canonical isomorphisms for triple products of representations, leading to so-called associators, one obtains a generalization of a Hopf algebra to a quasi-Hopf algebra, termed quasi-algebra for short. This raises the natural
question whether it is possible to find essentially new quasi-quantum groups corresponding to such quasi-algebras? Now, for a simple finite dimensional Lie algebra $\mathfrak{g}$ over $\mathbbm{C}$, a result
  of Drinfeld \cite[Prop 3.16]{Dri} states that  a quasitriangular quantized quasi-Hopf  enveloping algebra ${U}{\mathfrak{g}[[h]]}$
  is twist equivalent to the usual quantum group $U_{h}\mathfrak{g}$. This means that the quasi-quantum group associated
 to a simple finite dimensional Lie algebra is essentially not  new. But what happens in the restricted case? In other words, does there exist a quasi-algebra analogue for Lusztig's definition of a small quantum group, that is, do we have quasi-Frobenius-Lusztig kernels?

A remarkable recent development in Hopf algebra theory is the Andruskiewitsch-Schneider's classification of finite dimensional pointed Hopf algebras, cf. \cite{AH}; here the Frobenius-Lusztig kernels play a dominant role. So it is reasonable to expect that the theory of quasi-FL kernels (short for Frobenius-Lusztig kernels) will provide insight in the structure of finite dimensional quasi-Hopf algebras. Another direction relates to Conformal Field Theory (CFT).  It has been established by Majid, cf. \cite{Maj1}, that there is a quasitriangular quasi-algebra associated to a Topological Field Theory (TFT, for short). The relevance of quasi-Hopf algebras in TFT has been studied by Dijkgraaf, Pasquier, and Roche \cite{DPR}; in {\it loc.cit}. a new class of semisimple quasitriangular quasi-Hopf algebras, denoted by $D^{\omega}(G)$, has been constructed. Further development in CFT, in particular Logorithmic Conformal Field Theory, are pressing for the systematic construction and deeper study of finite dimensional quasitriangular quasi-algebras, in particular to look at nonsemisimple ones. We refer to \cite{GSTF} and references therein for more detail. It is fair to say that in the present situation there is a lack of such examples.

However,  the answer to the question about the existence of quasi-FL kernels is positive! The simplest quasi-FL kernel has been constructed in \cite{Liu}. It was denoted by ${\Q}\mathbf{u}_{q}(\mathfrak{sl}_{2})$ and it was associated
to $\mathfrak{sl}_{2}$. The aim of the present paper is to define ${\Q}\mathbf{u}_{q}(\mathfrak{g})$, the quasi-FL kernel associated
to an arbitrary simple finite dimensional Lie algebra $\mathfrak{g}$, extending  the ideas found in \cite{Liu}. Inspired by the classical FL kernel theory, one may believe that the quasi-FL kernel associated to a finite dimensional Lie algebra $\mathfrak{g}$ should be the Drinfeld double of the half small quasi-quantum group as defined in \cite{EG1}. Our primary mission
is to compute them and to make a comparison  with the Hopf algebra case. It turns out that the computation of quasi-FL kernels is really much more difficult than in the Hopf case.

Half small quasi-quantum groups appeared in the work of Etingof and Gelaki \cite{EG1} and the notation used for them was $A_{q}(\mathfrak{g})$ where $q$ is an $n^{2}$-th primitive root of unity. In case $n$ is odd and prime to the determinant of the Cartan matrix, they established that $D(A_{q}(\mathfrak{g}))$ is twist equivalent to $\mathbf{u}_{q}(\mathfrak{g})$ \cite{EG} (in case $\mathfrak{g}$ is not of type $G_{2}$). We go on to show that the conditions cannot be removed. More precisely, we establish that $D(A_{q}(\mathfrak{g}))$ is not twist equivalent to any Hopf algebra in many cases. This leads to new examples of nonsemisimple quasitriangular quasi-Hopf algebras and their corresponding braided tensor categories, which have independent interest by themselves.

In Section 2, we include some preliminaries including the definition of $A_{q}(\mathfrak{g})$, some facts about quiver Majid algebras and a useful criterion for a $3$-cocycle to be a $3$-coboundary. The Majid algebra $M_{q}(\mathfrak{g}):=(A_{q}(\mathfrak{g}))^{\ast}$ is studied in detail in Section 3, and we pay particular attention to the Serre relation in Proposition \ref{p3.4}. Section 4 is devoted to the computation of the Drinfeld double $D(A_{q}(\mathfrak{g}))$. The computations are explicit  and some of them are rather tedious. This makes for the technical heart of the paper.

In Section 5 we then go on to provide a presentation for $D(A_{q}(\mathfrak{g}))$ in terms of generators and relations.
 We discover some similarities between $D(A_{q}(\mathfrak{g}))$ and $\mathbf{u}_{q}(\mathfrak{g})$. In particular in Theorem \ref{t5.3},
   we obtain that $D(A_{q}(\mathfrak{g}))\cong{\Q}\mathbf{u}_{q}(\mathfrak{g})$. The final Section 6 is devoted to detecting when ${\Q}\mathbf{u}_{q}(\mathfrak{g})$ is twist equivalent to a Hopf algebra and the cases where this does not happen are identified.

 Throughout, we work over an algebraically closed field $\k$ of characteristic $0$ and $[\frac{\bullet}{\bullet}]$ stands for the floor function, that is, for any natural numbers $a,b$, $[\frac{a}{b}]$ denotes the biggest integer which is not bigger than $\frac{a}{b}$.  For general background knowledge, the reader is referred to \cite{Dri} for quasi-Hopf algebras, to \cite{CE,kassel} for general theory about tensor categories,  and to \cite{HLY} for pointed Majid algebras.

\section{Preliminaries} In this section we will recall the Etingof-Gelaki's constructions of half quasi-quantum groups \cite{EG1},
quiver Majid algebras \cite{H,HLY}, and the Drinfeld double of a quasi-Hopf algebra \cite{FN,Maj,Sch}. Then we
formulate a criterion for a $3$-cocycle of a finite abelian group to
  be a $3$-coboundary.

  \subsection{Half small quasi-quantum group $A_{q}(\mathfrak{g})$.}
  A quasi-bialgebra $(H,\M, \mu, \Delta, \varepsilon, \phi)$ is a
$\k$-algebra $(H,\M,\mu)$ with algebra morphisms $\Delta:\;H\to
H\otimes H$ (the comultiplication) and $\varepsilon:\; H\to
\k$ (the counit), and an invertible element $\phi\in H\otimes
H\otimes H$ (the reassociator), such that:
\begin{gather*}
(id\otimes \Delta)\Delta(a)\phi=\phi(\Delta\otimes id)\Delta(a),\;\;a\in H,\\
(id\otimes id\otimes\Delta)(\phi)(\Delta\otimes id\otimes id)(\phi)=(1\otimes \phi)
(id\otimes \Delta\otimes id)(\phi)(\phi\otimes 1),\\
(\varepsilon\otimes id)\Delta=id=(id\otimes \varepsilon)\Delta,\\
(id\otimes \varepsilon\otimes id)(\phi)=1\otimes 1.
\end{gather*}

Denote $\sum X^{i}\otimes Y^{i}\otimes Z^{i}$ by $\phi$ and
$\sum \overline{X}^{i}\otimes \overline{Y}^{i}\otimes
\overline{Z}^{i}$ by $\phi^{-1}$. A quasi-bialgebra $H$ is called a
quasi-Hopf algebra if there are a linear algebra antimorphism
$S:\;H\to H$ (called the antipode) and two elements
$\alpha,\beta\in H$ satisfying for all $a\in H$:
\begin{gather*}
\sum S(a_{(1)})\alpha a_{(2)}=\alpha \varepsilon(a),\;\;\sum a_{(1)}\beta S(a_{(2)})=\beta \varepsilon(a),\\
\sum X^{i}\beta S(Y^{i})\alpha Z^{i}=1=\sum S(\overline{X^{i}})\alpha \overline{Y^{i}}\beta S(\overline{Z^{i}}).
\end{gather*}
Here and below we use the Sweedler sigma notation $\D(a)=a_{(1)} \otimes
a_{(2)}$ (or $a'\otimes a''$) for the comultiplication and $a_{(1)}\otimes a_{(2)}\otimes \cdots \otimes a_{(n+1)}$
 for the $n$-iterated coproduct $\D^n(a)$ of $a$.
We call an invertible element $J\in H\otimes H$  a (Drinfeld) \emph{twist} of
$H$ if it satisfies $(\varepsilon\otimes id)(J)=(id\otimes
\varepsilon)(J)=1$. For a twist $J=\sum f_{i}\otimes g_{i}$ with
inverse $J^{-1}=\sum \overline{f_{i}}\otimes \overline{g_{i}}$, let:
\begin{equation}\alpha_{J}:= \sum S(\overline{f_{i}})\alpha \overline{g_{i}},\;\;
\beta_{J}:= \sum f_{i}\beta S(g_{i}).
\end{equation}
Given a twist $J$ of $H$, if $\beta_{J}$ is invertible, then one can
construct a new quasi-Hopf algebra structure
$H_{J}=(H,\Delta_{J},\varepsilon,\phi_{J},S_{J},\beta_{J}\alpha_{J},1)$
on the algebra $H$, where:
$$\Delta_{J}(a)=J\Delta(a)J^{-1},\;\;a\in H,$$
$$\phi_{J}=(1\otimes J)(id\otimes \Delta)(J)\phi(\Delta\otimes id)(J^{-1})(J\otimes 1)^{-1}$$
and,
$$S_{J}(a)=\beta_{J}S(a)\beta_{J}^{-1},\;\;a\in H.$$

Next we will define the quasi-Hopf algebra $A_{q}(\mathfrak{g})$.
Given an $m\times m$ Cartan matrix $(a_{ij})$ of finite type, it is known that there is a vector $(d_{1},\ldots,d_{m})$ with integer entries $d_{i}\in \{1,2,3\}$ such that the matrix $(d_{i}a_{ij})$ is symmetric. Let $n\geq 2$ be a natural number and $q$ be an $n^{2}$-th primitive root of unity.

 Let $N, M$, $d\geq 0$ be integers.  Following Gauss, we define:
  $$[N]^{!}_{d}=\prod_{h=1}^{N}\frac{q^{dh}-q^{-dh}}{q^{d}-q^{-d}},\;\;\;\;\left [
\begin{array}{c} M+N\\N
\end{array}\right]_{d}=\frac{[M+N]^{!}_{d}}{[M]^{!}_{d}[N]^{!}_{d}}.$$

 Let $H$ be a finite dimensional Hopf algebra generated by grouplike elements $g_{i}$ and skew-primitive
 elements $e_{i}$, $i=1,\ldots,m$, such that:
 $$g_{i}^{n^{2}}=1,\;\;g_{i}g_{j}=g_{j}g_{i},\;\;g_{i}e_{j}g_{i}^{-1}=q^{\delta_{i,j}}e_{j},\;\;e_{i}^{l_{i}}=0,$$
 $$\sum_{r+s=1-a_{ij}}(-1)^{s}\left [
\begin{array}{c} 1-a_{ij}\\s
\end{array}\right]_{d_{i}}e_{i}^{r}e_{j}e_{i}^{s}=0,\;\;\textrm{if}\;\;i\neq j,$$
and $$\Delta(e_{i})=e_{i}\otimes K_{i}+1\otimes e_{i}$$
where $l_{i}=\textrm{ord}(q^{d_{i}a_{ii}})$, the order of $q^{d_{i}a_{ii}}$, and $K_{i}:=\prod_{j}g_{j}^{d_{i}a_{ij}}$. From now on, we use
\begin{equation}c_{ij}:=d_{i}a_{ij}\end{equation}
to denote the entries of the symmetrized Cartan matrix.

Consider the subalgebra $A\subset H$ generated by $g_{i}^{n}, e_{i}$ for $i=1,\ldots,m$. It is clear that $A$ is not a Hopf subalgebra. However, we will see that it is a quasi-Hopf subalgebra of $H_{J}$ for some twist $J$ of $H$.

Let $\{1_{a}|a=(a_{1},\ldots,a_{m})\in (\mathbbm{Z}_{n^{2}})^{m}\}$ be the set of primitive idempotents of $\k (\mathbbm{Z}_{n^{2}})^{m}$. Define $1^{i}_{k}:=\frac{1}{n^{2}}\sum_{j=0}^{n^{2}-1}(q^{n^{2}-k})^{j}g_{i}^{j}$, and denote by $\epsilon_{i}\in (\mathbbm{Z}_{n^{2}})^{m}$ the vector with $1$ in the $i$-th place and $0$ otherwise. Note that
\begin{equation}1_{a}=1^{1}_{a_{1}}1^{2}_{a_{2}}\cdots 1^{m}_{a_{m}},\;\;\;\;1_{a}g_{i}=q^{a_{i}}1_{a},\;\;\;\;\;1_{a}e_i=e_i1_{a-\epsilon_{i}}.
\end{equation}
Let
$$\q:=q^{n},\;\;h_{i}:=g_{i}^{n}.$$
So the subgroup generated by $h_{i}$ is isomorphic to $(\mathbbm{Z}_{n})^{m}$. Similarly, let $\{\mathbf{1}_{a}|a=(a_{1},\ldots,a_{m})\in (\mathbbm{Z}_{n})^{m}\}$ be the set of primitive idempotents of $\k (\mathbbm{Z}_{n})^{m}$, $\mathbf{1}^{i}_{k}:=\frac{1}{n}\sum_{j=0}^{n-1}(\q^{n-k})^{j}h_{i}^{j}$ and $\epsilon_{i}\in (\mathbbm{Z}_{n})^{m}$ the vector with $1$ in the $i$-th place and $0$ otherwise. For later use, we let $\mathbf{1}_{0}$ stand for
the element $\prod_{i=1}^{m}\mathbf{1}^{i}_{0}$. Then we have the following identities:
\begin{equation} \mathbf{1}^{i}_{k}=\sum_{s=0}^{n-1}1^{i}_{k+sn},\;\;\mathbf{1}_{a}h_{i}=\q^{a_{i}}\mathbf{1}_{a},\;\;\;\;\;\mathbf{1}_{a}e_i=e_i\mathbf{1}_{a-\epsilon_{i}}.
\end{equation}
 For any natural number $x,y$, define $c(x,y):=q^{-x(y-y')}$, where $y'$ denotes the remainder after dividing $y$ by $n$. Let
 \begin{equation} J:=\sum_{a,b\in (\mathbbm{Z}_{n^{2}})^{m}}\prod_{i,j=1}^{m}c(a_{i},b_{j})^{c_{ij}}1_{a}\otimes 1_{b}.
 \end{equation}

Define $d(J):=(1\otimes J)(id\otimes \Delta)(J)(\Delta\otimes id)(J^{-1})(J\otimes 1)^{-1}$, {\it the differential} of $J$.
 The following result is a combination of  Lemma 4.2 and Theorem 4.3 in \cite{EG1}.
 \begin{lemma}\label{l1} \emph{(1)} $d(J)=\sum_{a,b,c\in (\mathbbm{Z}_{n})^{m}}(\prod_{i,j=1}^{m}\q^{-c_{ij}a_{i}[\frac{b_{j}+c_{j}}{n}]})\mathbf{1}_{a}
 \otimes \mathbf{1}_b\otimes \mathbf{1}_{c}.$

 \emph{(2)} The subalgebra $A$ generated by $h_i=g_i^n$ and $e_i, i=1,\cdots, m$, is a quasi-Hopf subalgebra of $H_{J}$ with coproduct $\Delta_{J}$ and reassociator $\phi=d(J)$.
  \end{lemma}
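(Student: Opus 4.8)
The plan is to prove both parts as a single package by treating $d(J)$ as the obstruction to $J$ being a genuine Hopf-algebra twist, and then cutting everything down to the subalgebra $A$. First I would observe that since $J$ is defined purely in terms of the idempotents $1_a$ of $\k(\mathbbm{Z}_{n^2})^m$, all the factors appearing in $d(J)=(1\otimes J)(id\otimes \Delta)(J)(\Delta\otimes id)(J^{-1})(J\otimes 1)^{-1}$ commute with one another, so the computation reduces to multiplying scalars. Using $\Delta(1_a)=\sum_{b+c=a}1_b\otimes 1_c$ together with the explicit formula $c(x,y)=q^{-x(y-y')}$, I would expand each of the four tensor factors on the triple product $\sum 1_a\otimes 1_b\otimes 1_c$ and collect the exponent of $q$. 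The key arithmetic identity is that $(y-y')$ is always a multiple of $n$, namely $y-y'=n[y/n]$, so the surviving exponents are all divisible by $n$; writing $\q=q^n$ and $h_i=g_i^n$ (so that $1_a\otimes1_b\otimes1_c$ collapses onto $\mathbf{1}_a\otimes\mathbf{1}_b\otimes\mathbf{1}_c$ after summing over the $\bmod n$ ambiguity via $\mathbf{1}^i_k=\sum_{s}1^i_{k+sn}$) gives exactly the stated formula $d(J)=\sum_{a,b,c\in(\mathbbm{Z}_n)^m}\bigl(\prod_{i,j}\q^{-c_{ij}a_i[(b_j+c_j)/n]}\bigr)\mathbf{1}_a\otimes\mathbf{1}_b\otimes\mathbf{1}_c$. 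This proves (1); alternatively, since the excerpt explicitly grants us Lemma 4.2 of \cite{EG1} for this fact, one may simply cite it, and I would do so to keep the argument short, but I would still carry out the collapsing computation so the reader sees why $d(J)$ lands inside $A^{\otimes 3}$.

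For part (2), the twisting machinery recalled just above the lemma tells us that $H_J=(H,\Delta_J,\varepsilon,\phi_J,S_J,\dots)$ is a quasi-Hopf algebra with $\phi_J=(1\otimes J)(id\otimes\Delta)(J)\,\phi\,(\Delta\otimes id)(J^{-1})(J\otimes 1)^{-1}$; since the ambient $H$ is an honest Hopf algebra, $\phi=1$, hence $\phi_J=d(J)$. By part (1), $\phi_J=d(J)\in A\otimes A\otimes A$. It remains to check that $A$ is closed under the twisted structure maps: (a) $A$ is a subalgebra by construction; (b) $\varepsilon(h_i)=\varepsilon(e_i)=0$... wait, $\varepsilon(h_i)=1$, $\varepsilon(e_i)=0$, so $\varepsilon(A)\subseteq\k$; (c) for the coproduct, $\Delta_J(a)=J\Delta(a)J^{-1}$, and I would verify $\Delta_J(h_i)=h_i\otimes h_i$ (the conjugation by $J$ fixes grouplikes since $J$ is built from the $1_a$, which are polynomials in the $g_i$, and $\Delta(g_i^n)=g_i^n\otimes g_i^n$ commutes through), and that $\Delta_J(e_i)\in A\otimes A$ — this is the one genuinely substantive check, because $\Delta(e_i)=e_i\otimes K_i+1\otimes e_i$ with $K_i=\prod_j g_j^{c_{ij}}$, and $K_i$ need not lie in $A$ on the nose; one has to use the defining exponents $c_{ij}$ of $J$ precisely so that the conjugation $J(e_i\otimes K_i+1\otimes e_i)J^{-1}$ rewrites $K_i$ up to scalars into $\prod_j h_j^{c_{ij}/\gcd}$-type elements of $A$ together with idempotents, i.e. so that the "defect" $K_i$ vs. its $n$-th-power part is absorbed. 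Finally (d) the antipode: $S_J(a)=\beta_J S(a)\beta_J^{-1}$, and one checks $\beta_J\in A$ (again because it is assembled from $J$ and $\beta\in\k$) and $S(h_i)=h_i^{-1}\in A$, $S(e_i)=-e_iK_i^{-1}$ which lies in $A$ by the same rewriting as in (c).

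The main obstacle I anticipate is exactly step (c)–(d): showing the twisted coproduct and antipode preserve $A$. This is not formal — it is the whole reason the numbers $c(x,y)=q^{-x(y-y')}$ and the exponents $c_{ij}$ in $J$ were chosen the way they were, and it amounts to an exponent bookkeeping in which one verifies that $J$ conjugates $e_i\otimes K_i$ to a scalar multiple of $e_i\otimes(\text{element of }A)$, using the identities $1_ae_i=e_i1_{a-\epsilon_i}$ and $g_ie_jg_i^{-1}=q^{c_{ij}}e_j$ repeatedly. Since the excerpt explicitly tells us that Lemma~\ref{l1} is "a combination of Lemma 4.2 and Theorem 4.3 in \cite{EG1}", I would in practice quote those two results verbatim for the final write-up; the proof above is the argument one would reconstruct if a self-contained treatment were wanted, and I would at minimum include the collapsing computation of part (1) and the observation $\phi_J=d(J)$ explicitly, since these are short and make the rest of the paper's conventions transparent.
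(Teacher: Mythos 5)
Your fallback plan --- to quote Lemma 4.2 and Theorem 4.3 of \cite{EG1} --- is exactly what the paper does: it gives no proof of Lemma \ref{l1} beyond that citation. Your sketch of part (1) is also the right computation: all factors of $d(J)$ lie in the commutative algebra $\k(\Z_{n^2})^m$, $c(x,y)=\q^{-x[\frac{y}{n}]}$, so one only has to collect exponents, check they depend on the entries of $a,b,c$ modulo $n$ alone, and collapse onto the idempotents $\mathbf{1}_a$ via $\mathbf{1}^i_k=\sum_s 1^i_{k+sn}$; likewise the observation $\phi_J=d(J)$ because $\phi=1$ in $H$ is correct, and your step (c) (closure of $A$ under $\Delta_J$) is precisely the substantive computation the paper carries out later in Lemma \ref{l2}.

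Your self-contained argument for part (2) has, however, a genuine error at step (d). The element $\beta_J=\sum_a\prod_{s,t}c(a_s,-a_t)^{c_{st}}1_a$ does \emph{not} lie in $A$: its coefficient at $1_a$ is $\prod_{s,t}\q^{c_{st}a_s(1+[\frac{a_t}{n}])}$ (when the $a_t'$ are nonzero), which depends on $[\frac{a_t}{n}]$ and not only on $a$ modulo $n$; already for $m=1$, $c_{11}=2$, $n\ge 3$ the coefficients at $a$ and at $a+n$ differ by $\q^{2a}$. The heuristic ``it is assembled from $J$'' proves too much, since it would equally show $J\in A\otimes A$, which is false. Similarly $S(e_i)=-e_iK_i^{-1}\notin A$. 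What is actually true, and what the paper verifies in Lemma \ref{l2}, is that after the gauge normalization $\beta=1$ the element $\alpha=\alpha_J\beta_J$ lies in $\k(\Z_n)^m\subset A$, and that the antipode value on $e_i$, obtained from the antipode axiom applied to $\Delta_J(e_i)=e_i\otimes\flat_i^{-1}+1\otimes\sum_{j=1}^{n-1}\mathbf{1}^i_je_i+H_i^{-1}\otimes\mathbf{1}^i_0e_i$, equals $-(\alpha\sum_{j=1}^{n-1}\mathbf{1}^i_je_i+H_i\alpha\mathbf{1}^i_0e_i)\flat_i\alpha^{-1}\in A$. So a self-contained proof of (2) must verify $\alpha_J\beta_J\in A$ and the stability of $A$ under the gauged antipode $S_J=\beta_JS(\cdot)\beta_J^{-1}$ by the same exponent bookkeeping as in (c), rather than by asserting $\beta_J\in A$; with that repair your outline goes through.
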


\begin{definition} The quasi-Hopf algebra in Lemma \ref{l1} is called the half small quasi-quantum group of $\mathfrak{g}$, denoted by $A_{q}(\mathfrak{g})$.
  \end{definition}
For simplicity, we introduce two more notations:
   \begin{equation} \label{eq;1}
   \flat_{i}:=\sum_{a\in (\mathbbm{Z}_{n})^{m}}\prod_{j=1}^{m}q^{-c_{ij}a_{j}}\mathbf{1}_{a},\ \ H_i=\prod_{j=1}^{m}h_{j}^{c_{ji}}=\prod_{j=1}^{m}h_{j}^{c_{ij}}.
   \end{equation}
 In \cite{EG,EG1}, there are no explicit formulas for the coproduct, the elements $\alpha, \beta$ and the antipode for $A_{q}(\mathfrak{g})$. In fact, they have the following expressions.

  \begin{lemma}\label{l2}
  For the quasi-Hopf algebra $A_{q}(\mathfrak{g})$, we have, for $i=1,\ldots,m$,
 $$\begin{array}{ll}
 \Delta_{J}(e_{i}) = e_{i}\otimes \flat_{i}^{-1}+1\otimes \sum_{j=1}^{n-1}\mathbf{1}^{i}_{j}e_{i}+H_{i}^{-1}\otimes \mathbf{1}^{i}_{0}e_{i}, &
  \Delta_{J}(h_{i})=h_{i}\otimes h_{i},\\
 \alpha = \sum_{a\in (\mathbbm{Z}_{n})^{m}}\prod_{s,t=1}^{m}\q^{c_{st}a_{s}[\frac{n-1+a_{t}}{n}]}\mathbf{1}_{a},& \beta=1,\\
 S(e_{i})= -(\alpha\sum_{j=1}^{n-1}\mathbf{1}^{i}_{j}e_{i}+H_{i}\alpha \mathbf{1}^{i}_{0}e_{i})\flat_{i}\alpha^{-1},& S(h_{i})=h_{i}^{-1}.
 \end{array}$$
   \end{lemma}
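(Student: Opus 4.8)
The strategy is to \emph{derive} all four formulas from the twist transformation $H_J$, using that $A_q(\mathfrak g)$ sits inside $H_J$ with the twist $J$ of equation (2.5) restricted to $A$. The starting data are the Hopf-algebra structure on $H$ (with $\Delta(e_i)=e_i\otimes K_i+1\otimes e_i$, $\Delta(h_i)=h_i\otimes h_i$, antipode $S(e_i)=-e_iK_i^{-1}$, $S(h_i)=h_i^{-1}$, $\varepsilon$ trivial, $\phi=1$), and the general formulas $\Delta_J(a)=J\Delta(a)J^{-1}$, $S_J(a)=\beta_J S(a)\beta_J^{-1}$ with $\alpha_J=\sum S(\overline{f_i})\alpha\overline{g_i}$, $\beta_J=\sum f_i\beta S(g_i)$, and $\alpha=\beta=1$ in $H$. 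So concretely: first compute $J\Delta(e_i)J^{-1}$ and $J\Delta(h_i)J^{-1}$; second compute $\beta_J$ and $\alpha_J$ from $J$; third assemble $S_J$ and check the identity for $\alpha$ as $\beta_J\alpha_J$ (the new $\alpha$ of $H_J$ is $\beta_J\alpha_J$, the new $\beta$ is $1$, per the displayed formula $H_J=(H,\Delta_J,\varepsilon,\phi_J,S_J,\beta_J\alpha_J,1)$).

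For the coproduct: since $J=\sum_{a,b}\big(\prod_{i,j}c(a_i,b_j)^{c_{ij}}\big)1_a\otimes 1_b$ is a sum of idempotents, conjugating $\Delta(h_i)=h_i\otimes h_i$ does nothing because $h_i$ (being a power of $g_i$) commutes with every $1_a$; this gives $\Delta_J(h_i)=h_i\otimes h_i$ immediately. For $e_i$, I would use the commutation rule $1_a e_i=e_i 1_{a-\epsilon_i}$ from (2.3) to move the left tensor-factors of $J$ and $J^{-1}$ past $e_i\otimes K_i$ and $1\otimes e_i$. The ``$1\otimes e_i$'' term picks up a factor depending only on the second-coordinate shift, which is $c(a_i,b_j)^{c_{ij}}$ versus $c(a_i,b_j-\epsilon_i)^{c_{ij}}$; after summing over $a$ this collapses (via $\sum_a 1_a=1$ on the first leg, but keeping track of which residues survive) to the two pieces $1\otimes\sum_{j=1}^{n-1}\mathbf 1^i_j e_i$ and $H_i^{-1}\otimes\mathbf 1^i_0 e_i$ — the split at $j=0$ is exactly where the remainder function $y'$ in $c(x,y)=q^{-x(y-y')}$ jumps, and $H_i^{-1}=\prod_j h_j^{-c_{ij}}$ emerges from $c(a_i,b_j-1)/c(a_i,b_j)$ on the $b_j\equiv 0$ component. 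The ``$e_i\otimes K_i$'' term similarly produces $e_i\otimes(\text{something})$, and one checks that ``something'' is precisely $\flat_i^{-1}=\sum_a\prod_j q^{c_{ij}a_j}\mathbf 1_a$ by comparing the $c$-factors incurred from the first-leg shift $a\mapsto a-\epsilon_i$ against the fixed grouplike $K_i$ reduced modulo $h_j^n$; note $\varepsilon$-normalization $(\varepsilon\otimes\mathrm{id})\Delta_J(e_i)=e_i$ gives a free consistency check. For $\beta_J=\sum f_i\beta S(g_i)=\sum f_iS(g_i)$: since each $1_b$ is fixed by $S$ (it is a symmetric polynomial in $g_i,g_i^{-1}$ up to the sign in the exponent — one must verify $S(1^i_k)=1^i_{-k}$ or similar and that the double sum telescopes), one finds $\beta_J=1$, matching the stated $\beta=1$. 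Then $\alpha=\beta_J\alpha_J=\alpha_J=\sum S(\overline{f_i})\overline{g_i}=\sum_{a,b}\prod_{i,j}c(a_i,b_j)^{-c_{ij}}S(1_a)1_b$, and evaluating $S(1_a)=1_{?}$ and resumming over one index yields the closed form $\sum_a\prod_{s,t}\q^{c_{st}a_s[\frac{n-1+a_t}{n}]}\mathbf 1_a$ — here the floor function appears through $\sum_{k}c(k,k)^{-1}$-type partial sums, exactly paralleling how $d(J)$ acquired floor functions in Lemma \ref{l1}. Finally $S(e_i)=S_J(e_i)=\beta_J S(e_i)\beta_J^{-1}=S(e_i)=-e_iK_i^{-1}$ in $H$, but rewritten inside $A$: since $K_i=\prod_j g_j^{c_{ij}}$ and only $g_j^n=h_j$ lies in $A$, one must re-express $-e_iK_i^{-1}$ using the idempotent decomposition $1=\sum_k \mathbf 1^i_k$, absorbing the ``fractional'' part of $K_i^{-1}$ into the $\flat_i$ and $\alpha^{\pm1}$ factors; the claimed formula $S(e_i)=-(\alpha\sum_{j=1}^{n-1}\mathbf 1^i_j e_i+H_i\alpha\mathbf 1^i_0 e_i)\flat_i\alpha^{-1}$ is then verified by multiplying out, using $\mathbf 1^i_j e_i=e_i\mathbf 1^i_{j-1}$ and the defining relations of $\flat_i$, $\alpha$, $H_i$, and checking it against the antipode axiom $\sum S_J((e_i)_{(1)})\,\alpha\,(e_i)_{(2)}=\alpha\varepsilon(e_i)=0$ using the already-computed $\Delta_J(e_i)$.

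The main obstacle is bookkeeping, not conceptual: the function $c(x,y)=q^{-x(y-y')}$ is only ``multiplicative up to the floor correction'', so every time a coordinate is shifted by $\epsilon_i$ one incurs a factor that is $1$ on most idempotent components but jumps on the component where the second argument is $\equiv 0\pmod n$, and it is precisely the careful tracking of these jumps — across the first leg for $\flat_i^{-1}$, across the second leg for the $\mathbf 1^i_0$ vs.\ $\sum_{j\ge1}\mathbf 1^i_j$ split, and across the diagonal for $\alpha$ — that must be done without error. A secondary subtlety is that $S$ on $H$ is an \emph{anti}algebra map, so in $\beta_J=\sum f_iS(g_i)$ and in $\alpha_J=\sum S(\overline{f_i})\overline{g_i}$ the order of multiplication matters; one must also confirm $S$ acts on the idempotents $1_a$ by the involution $a\mapsto -a$ so that the sums over group elements can be reindexed cleanly. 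Once these sign and order conventions are pinned down, each of the four formulas follows by a direct (if lengthy) computation, and the antipode and $\alpha$-axioms of the quasi-Hopf structure serve as independent checks that nothing was dropped.
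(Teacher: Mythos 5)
Your computation of $\Delta_J(h_i)$ and $\Delta_J(e_i)$ follows the same route as the paper (conjugate by $J$, use $1_ae_i=e_i1_{a-\epsilon_i}$, and track the jumps of $y\mapsto y-y'$ across the component $b_i\equiv 0$), and that part is sound. The genuine gap is your claim that $\beta_J=1$. It is false: since $S(1_b)=1_{-b}$, one gets
$\beta_J=\sum_{a}\prod_{s,t}c(a_s,-a_t)^{c_{st}}1_a=\sum_a\prod_{s,t}\q^{-c_{st}a_s[\frac{n^2-a_t}{n}]}1_a$,
whose coefficients are not $1$ (already for $\mathfrak{sl}_2$ and $a=1$ the coefficient is $\q^{2}$). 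Nothing ``telescopes'': after hitting $S$ the two legs of $J$ are multiplied on the same idempotent, and the nontrivial $c$-factors survive. The statement $\beta=1$ in the lemma does not require $\beta_J=1$; it is the normalization built into the twisted structure $H_J=(H,\Delta_J,\varepsilon,\phi_J,S_J,\beta_J\alpha_J,1)$, needing only invertibility of $\beta_J$.

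This error propagates into your two remaining formulas. First, you set $\alpha=\beta_J\alpha_J=\alpha_J$; but $\alpha_J=\sum_a\prod\q^{-c_{st}a_s[\frac{a_t}{n}]}1_a$ alone has coefficients depending on $a$ modulo $n^2$, so it does not even lie in $A$, let alone equal the stated $\alpha$. The paper computes the product $\alpha_J\beta_J$, in which the exponents combine to $a'_t+(n^2-a_t)'\in\{0,n\}$, and only then do the coefficients descend to $(\mathbbm{Z}_n)^m$ and give $\sum_a\prod\q^{c_{st}a_s[\frac{n-1+a_t}{n}]}\mathbf{1}_a$. Second, your derivation $S_J(e_i)=\beta_JS(e_i)\beta_J^{-1}=S(e_i)=-e_iK_i^{-1}$ uses $\beta_J=1$ again, and the subsequent ``absorb the fractional part of $K_i^{-1}$'' step is not carried out; as written it would not land on the claimed expression. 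The correct (and paper's) derivation of $S(e_i)$ is the one you mention only as a consistency check: apply the antipode axiom $S(e_{i(1)})\alpha e_{i(2)}=\alpha\varepsilon(e_i)=0$ to the already-computed $\Delta_J(e_i)$, which immediately forces $S(e_i)=-(\alpha\sum_{j=1}^{n-1}\mathbf{1}^i_je_i+H_i\alpha\mathbf{1}^i_0e_i)\flat_i\alpha^{-1}$. So you should delete the $\beta_J=1$ argument, take $\beta=1$ from the definition of $H_J$, compute $\alpha$ as the genuine product $\alpha_J\beta_J$, and promote the axiom check to the actual derivation of $S(e_i)$.
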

  \begin{proof} We have:
  \begin{eqnarray*}
  \Delta_{J}(e_{i})&=&J\Delta(e_{i})J^{-1}\\
  &=&\sum_{a,b\in (\mathbbm{Z}_{n^{2}})^{m}}\prod_{s,t=1}^{m}c(a_{s},b_{t})^{c_{st}}1_{a}\otimes 1_{b}(e_{i}\otimes K_{i}+1\otimes e_{i})\\&&\times\sum_{c,d\in (\mathbbm{Z}_{n^{2}})^{m}}\prod_{s,t=1}^{m}c(c_{s},d_{t})^{-c_{st}}1_{c}\otimes 1_{d}\\
  &=&\sum_{c,b\in (\mathbbm{Z}_{n^{2}})^{m}}\prod_{s,t=1}^{m}c((c+\epsilon_{i})_{s},b_{t})^{c_{st}}
\prod_{s,t=1}^{m}c(c_{s},b_{t})^{-c_{st}}q^{\sum_{j}c_{ij}b_{j}}1_{c+\epsilon_{i}}e_{i}\otimes 1_{b}\\
  &&+\sum_{a,d\in (\mathbbm{Z}_{n^{2}})^{m}}\prod_{s,t=1}^{m}c(a_{s},(d+\epsilon_{i})_{t})^{c_{st}}
  \prod_{s,t=1}^{m}c(a_{s},d_{t})^{-c_{st}}1_{a}\otimes 1_{d+\epsilon_{i}}e_{i}\\
 &=&\sum_{c,b\in (\mathbbm{Z}_{n^{2}})^{m}}\prod_{s\neq i,t}c(c_{s},b_{t})^{c_{st}-c_{st}}
  \prod_{t=1}^{m}c(c_{i}+1, b_t)^{c_{it}}c(c_{i}, b_t)^{-c_{it}}q^{\sum_{j}c_{ij}b_{j}}1_{c+\epsilon_{i}}e_{i}\otimes 1_{b}\\
  &&+
  \sum_{a,d\in (\mathbbm{Z}_{n^{2}})^{m}}\prod_{s,t\neq i}c(a_{s},d_{t})^{c_{st}-c_{st}}\prod_{s=1}^{m}c(a_{s},d_{i}+1)^{c_{si}}c(a_{s},d_{i})^{-c_{si}}
  1_{a}\otimes 1_{d+\epsilon_{i}}e_{i}\\
 \end{eqnarray*}
\begin{eqnarray*}
   &=&\sum_{c,b\in (\mathbbm{Z}_{n^{2}})^{m}}
  q^{\sum_{j}(-c_{ij}b_{j}+c_{ij}b'_{j})}q^{\sum_{j}c_{ij}b_{j}}1_{c+\epsilon_{i}}e_{i}\otimes 1_{b}\\
  &&+
  \sum_{a,d\in (\mathbbm{Z}_{n^{2}})^{m}}\prod_{s=1}^{m}c(a_{s},d_{i}+1)^{c_{si}}c(a_{s},d_{i})^{-c_{si}}
  1_{a}\otimes 1_{d+\epsilon_{i}}e_{i}\\
  &=&\sum_{a,b\in (\mathbbm{Z}_{n^{2}})^{m}}\prod_{t=1}^{m}q^{c_{it}b_{t}'}1_{a+\epsilon_{i}}e_{i}\otimes 1_{b}+
  \sum_{a,b\in (\mathbbm{Z}_{n^{2}})^{m}}\prod_{s=1}^{m}q^{c_{si}a_{s}((b_{i}+1)'-b_{i}'-1)}1_{a}\otimes 1_{b+\epsilon_{i}}e_{i}\\
  &=&e_{i}\otimes \sum_{b\in (\mathbbm{Z}_{n})^{m}}\prod_{t=1}^{m}q^{c_{it}b_{t}}\mathbf{1}_{b}
  +  \sum_{a,b\in (\mathbbm{Z}_{n})^{m}}\prod_{s=1}^{m}q^{c_{si}a_{s}((b_{i}+1)'-b_{i}'-1)}\mathbf{1}_{a}\otimes \mathbf{1}^{i}_{b_{i}+1}e_{i}\\
  &=&e_{i}\otimes \flat_{i}^{-1}
  +  \sum_{a\in (\mathbbm{Z}_{n})^{m}}\mathbf{1}_{a}\otimes  \sum_{b_{i}\neq n-1}\mathbf{1}^{i}_{b_{i}+1}e_{i}
  +\sum_{a\in (\mathbbm{Z}_{n})^{m}}\prod_{s=1}^{m}\q^{-c_{si}a_{s}}\mathbf{1}_{a}\otimes  \mathbf{1}^{i}_{0}e_{i}\\
  &=& e_{i}\otimes \flat_{i}^{-1}+1\otimes \sum_{j=1}^{n-1}\mathbf{1}^{i}_{j}e_{i}+H_{i}^{-1}\otimes \mathbf{1}^{i}_{0}e_{i}.
  \end{eqnarray*}
  By definition:
  $$\alpha_{J}=\sum S(\overline{f}_{i})\overline{g}_i=\sum_{a\in (\mathbbm{Z}_{n^{2}})^{m}}\prod_{s,t=1}^{m}c(-a_{s},a_{t})^{-c_{st}}1_{a}=
  \sum_{a\in (\mathbbm{Z}_{n^{2}})^{m}}\prod_{s,t=1}^{m}q^{-c_{st}a_{s}(a_{t}-a_{t}')}1_{a},$$
  $$\beta_{J}=\sum {f}_{i}S({g}_i)=\sum_{a\in (\mathbbm{Z}_{n^{2}})^{m}}\prod_{s,t=1}^{m}c(a_{s},-a_{t})^{c_{st}}1_{a}=
  \sum_{a\in (\mathbbm{Z}_{n^{2}})^{m}}\prod_{s,t=1}^{m}q^{-c_{st}a_{s}(-a_{t}-(n^{2}-a_{t})')}1_{a},$$
  and so:
   \begin{eqnarray*} \alpha&=&\alpha_{J}\beta_{J}\\
   &=&\sum_{a\in (\mathbbm{Z}_{n^{2}})^{m}}\prod_{s,t=1}^{m}q^{c_{st}a_{s}(a'_{t}+(n^{2}-a_{t})')}1_{a}\\
   &=&\sum_{a\in (\mathbbm{Z}_{n})^{m}}\prod_{s,t=1}^{m}q^{c_{st}a_{s}(a_{t}+(n-a_{t})')}\mathbf{1}_{a}\\
   &=&\sum_{a\in (\mathbbm{Z}_{n})^{m}}\prod_{s,t=1}^{m}\q^{c_{st}a_{s}[\frac{n-1+a_{t}}{n}]}\mathbf{1}_{a}.
    \end{eqnarray*}

   By the comultiplication formula for $e_{i}$ and the definition of the antipode, we obtain:
   $$S(e_{i})\alpha\flat_{i}^{-1}+\alpha\sum_{j=1}^{n-1}\mathbf{1}^{i}_{j}e_{i}+H_{i}
   \alpha\mathbf{1}^{i}_{0}e_{i}=\alpha\varepsilon(e_{i})=0.$$
It follows that $S(e_{i})=-(\alpha\sum_{j=1}^{n-1}\mathbf{1}^{i}_{j}e_{i}+H_{i}\alpha \mathbf{1}^{i}_{0}e_{i})\flat_{i}\alpha^{-1}$. The formulas for elements $h_i$
   are obvious.
  \end{proof}

\begin{remark} \emph{In \cite{EG1} Etingof and Gelaki used the Cartan matrix $(a_{ij})$ to define the half small quasi-quantum group $A_{q}(\mathfrak{g})$. In order to keep the consistency with Lusztig's definition \cite{Lus1}, we use the symmetrized Cartan matrix $(c_{ij})$ instead of $(a_{ij})$ throughout this paper. To show the difference, we use $A'_{q}(\mathfrak{g})$ to denote Etingof-Gelaki's half small quasi-quantum group. The two are equal in the simply laced case. But, in general,
$A_{q}(\mathfrak{g})\not\cong A'_{q}(\mathfrak{g})$ and they are not even twist equivalent (see Section 6 for the definition of twist equivalence).
The reason is that they have different reassociators. For example, take the Cartan matrix of type $G_{2}$  and assume that they are twist equivalent. Denote by $\phi_{A}$ (resp. $\phi_{A'}$) the reassociator of
$A_{q}(\mathfrak{g})$ (resp. $A'_{q}(\mathfrak{g})$).  If the representation categories of $A_{q}(\mathfrak{g})$ and $A'_{q}(\mathfrak{g})$ are monoidal equivalent, then their tensor subcategories generated by simple objects
are also monoidal equivalent. This implies that $((\k G)^{\ast}, \phi_{A})$ is twist equivalent to $((\k G)^{\ast}, \phi_{A'})$, where $G\cong (\mathbbm{Z}_{n})^{m}$ is the
set of group-like elements of both $A_{q}(\mathfrak{g})$ and $A'_{q}(\mathfrak{g})$. However, $((\k G)^{\ast}, \phi_{A})$ and $((\k G)^{\ast},\phi_{A'})$ are twist equivalent if and only if $\phi_{A}$ and $\phi'_{A}$ are cohomologous cocycles of $G$.  But clearly they are not cohomologous. So $A_{q}(\mathfrak{g})$ and $ A'_{q}(\mathfrak{g})$ are not twist equivalent.}
\end{remark}

\subsection{Quiver Majid algebras.}
A dual quasi-bialgebra, or Majid bialgebra for short, is a coalgebra
$(H,\D,\e)$ equipped with a compatible quasi-algebra structure.
Namely, there exist two coalgebra homomorphisms:
 $$\M: H \otimes H
\to H, \ a \otimes b \mapsto ab, \quad \mu: \k \to H,\ \lambda
\mapsto \lambda 1_H$$ and a convolution-invertible map $\Phi:
H^{\otimes 3} \to \k$ called a reassociator, such that for all $a,b,c,d
\in H$ the following equalities hold:
\begin{gather*}
a_{(1)}(b_{(1)}c_{(1)})\Phi(a_{(2)},b_{(2)},c_{(2)})=\Phi(a_{(1)},b_{(1)},c_{(1)})(a_{(2)} b_{(2)})c_{(2)},\\
1_Ha=a=a1_H, \\
\Phi(a_{(1)},b_{(1)},c_{(1)}d_{(1)})\Phi(a_{(2)}b_{(2)},c_{(2)},d_{(2)}) \\
 =\Phi(b_{(1)},c_{(1)},d_{(1)})\Phi(a_{(1)},b_{(2)}c_{(2)},d_{(2)})\Phi(a_{(3)},b_{(1)},c_{(3)}),\nonumber \\
\Phi(a,1_H,b)=\e(a)\e(b).
\end{gather*}
 $H$ is called a Majid algebra if, in addition,
there exist a coalgebra antimorphism $S: H \to H$ and two
functionals $\alpha,\beta: H \to \k$ such that for all $a \in H,$
\begin{gather*}
S(a_{(1)})\alpha(a_{(2)})a_{(3)}=\alpha(a)1_H, \quad
a_{(1)}\beta(a_{(2)})S(a_{(3)})=\beta(a)1_H, \\
\Phi(a_{(1)},S(a_{(3)}), a_{(5)})\beta(a_{(2)})\alpha(a_{(4)})= \\
\Phi^{-1}(S(a_{(1)}),a_{(3)},S(a_{(5)})) \alpha(a_{(2)})\beta(a_{(4)})=\e(a).
\nonumber
\end{gather*}

A Majid algebra $H$ is said to be pointed, if the underlying
coalgebra is pointed. Given a pointed Majid algebra $(H,\D, \e,
\M, \mu, \Phi,S,\alpha,\beta),$ we let $\{H_n\}_{n \ge 0}$ be its
coradical filtration, and $\gr H = H_0 \oplus H_1/H_0 \oplus H_2/H_1
\oplus \cdots$ the associated graded coalgebra. Then
$\gr H$ possesses an (induced) graded Majid algebra structure. The
corresponding graded reassociator $\gr\Phi$ satisfies
$\gr\Phi(\bar{a},\bar{b},\bar{c})=0$ for all
$\bar{a},\bar{b},\bar{c} \in \gr H$ unless they all lie in $H_0.$
Similar condition holds for $\gr\alpha$ and $\gr\beta.$ In
particular, $H_0$ is a Majid subalgebra and it coincides with the
group algebra $\k G$ of the group $G=G(H),$ the set of group-like
elements of $H.$

Now assume that $H$ is a Majid algebra with reassociator $\Phi.$ A
linear space $M$ is called an $H$-Majid bimodule, if $M$ is an
$H$-bicomodule with structure maps $(\dl,\dr),$ and there
are two $H$-bicomodule morphisms:
$$\mu_{L}: H \otimes M \To M, \ h \otimes m \mapsto h\cdot m, \quad
\mu_{R}: M \otimes H \To M, \ m \otimes h \mapsto m\cdot h$$ such that
for all $g, h \in H, m \in M,$ the following equalities hold:
\begin{gather}
1_H\cdot m=m=m\cdot 1_H,\\
g_{(1)}\cdot(h_{(1)}\cdot m_0)\Phi(g_{(2)},h_{(2)},m_1)=\Phi(g_{(1)},h_{(1)},m^{-1})(g_{(2)}h_{(2)})\cdot m^0,\label{eq;1.5} \\
m_0\cdot(g_{(1)}h_{(1)})\Phi(m_1,g_{(2)},h_{(2)})=\Phi(m^{-1},g_{(1)},h_{(1)})(m^{0} \cdot g_{(2)})\cdot h_{(2)}, \label{eq;1.6}\\
g_{(1)}\cdot(m_0\cdot h_{(1)})\Phi(g_{(2)},m_1,h_{(2)})=\Phi(g_{(1)},m^{-1},h_{(1)})(g_{(2)}\cdot m^0)\cdot h_{(2)},\label{eq;1.7}
\end{gather}
where we use the Sweedler notation:
 $$\dl(m)=m^{-1} \otimes m^0,
\quad \dr(m)=m_0 \otimes m_1$$ for the comodule structure maps. Since we only consider Majid bimodules over $(\k G, \Phi)$, it is
convenient to rewrite formulas \eqref{eq;1.5}-\eqref{eq;1.7}. Assume $M$ is a Majid bimodule over $(\k G, \Phi)$ and so $M=\bigoplus_{g,h \in G}
\ ^gM^h$, where:
$$^gM^h=\{ m \in M \ | \ \d_{_L}(m)=g \otimes m, \ \d_{_R}(m)=m
\otimes h \} \ .$$ Formulas \eqref{eq;1.5}-\eqref{eq;1.7} can be simplified as:
\begin{gather}
e\cdot(f\cdot m)=\frac{\Phi(e,f,g)}{\Phi(e,f,h)}(ef)\cdot m,\label{eq;1.8}\\
(m\cdot e)\cdot f=\frac{\Phi(h,e,f)}{\Phi(g,e,f)}m\cdot (ef),\label{eq;1.9}\\
(e\cdot m)\cdot f=\frac{\Phi(e,h,f)}{\Phi(e,g,f)}e\cdot (m\cdot f),\label{eq;1.10}
\end{gather}
for all $e,f,g,h \in G$ and $m \in \ ^gM^h$.

Now let us recall some basic definitions about quivers.
A quiver is a quadruple $Q=(Q_0,Q_1,s,t),$ where $Q_0$ is the set of
vertices, $Q_1$ is the set of arrows, and $s,t:\ Q_1 \longrightarrow
Q_0$ are two maps assigning respectively the source and the target
for each arrow. A path of length $l \ge 1$ in the quiver $Q$ is a
finitely ordered sequence of $l$ arrows $a_l \cdots a_1$ such that
$s(a_{i+1})=t(a_i)$ for $1 \le i \le l-1.$ By convention, a vertex is
said to be a {\it trivial path} of length $0.$
For a quiver $Q,$ the associated path coalgebra $\k Q$ is the
$\k$-space spanned by the set of paths, with counit and
comultiplication maps defined by $\e(g)=1, \ \D(g)=g \otimes g$ for
each $g \in Q_0,$ and for each nontrivial path $p=a_n \cdots a_1, \
\e(p)=0,$
\begin{equation}
\D(a_n \cdots a_1)=p \otimes s(a_1) + \sum_{i=1}^{n-1}a_n \cdots
a_{i+1} \otimes a_i \cdots a_1 \nonumber + t(a_n) \otimes p \ .
\end{equation}
The lengths of paths give a natural gradation to the path coalgebra.
Let $Q_n$ denote the set of paths of length $n$ in $Q$. Then $\k
Q=\oplus_{n \ge 0} \k Q_n$ and $\D(\k Q_n) \subseteq
\oplus_{n=i+j}\k Q_i \otimes \k Q_j.$ It is clear that $\k Q$ is pointed with
the set of group-likes $G(\k Q)=Q_0,$ and has the following
coradical filtration $$ \k Q_0 \subseteq \k Q_0 \oplus \k Q_1
\subseteq \k Q_0 \oplus \k Q_1 \oplus \k Q_2 \subseteq \cdots.$$
Hence $\k Q$ is coradically graded.

In this paper, we consider a special kind of quiver, that is, a \emph{Hopf quiver} \cite{CR} defined via a group and its ramification datum.
Let $G$ be a group and denote by $\C$ its set of conjugacy classes. A ramification datum $R$ of $G$ is a formal sum
$\sum_{C \in \C}R_CC$ of conjugacy classes with coefficients $R_C$ in
$\mathbbm{N}=\{0,1,2,\cdots\}.$ The corresponding Hopf quiver
$Q=Q(G,R)$ is defined as follows: the set of vertices $Q_0$ is $G,$
and for each $x \in G$ and $c \in C,$ there are $R_C$ arrows going
from $x$ to $cx.$ For example, let $G=\mathbbm{Z}_{n}=\langle g\rangle$ and $R=g$, the corresponding Hopf quiver is:
$$ \xy {\ar (0,0)*{\unit}; (30,-10)*{g}}; {\ar (-30,-10)*{g^{n-1}};
(0,0)*{\unit}}; {\ar (30,-10)*{g}; (3,-10)*{ \cdots  \ }}; {\ar
(-3,-10)*{\ \cdots  }; (-30,-10)*{g^{n-1}}}
\endxy $$
It is called a \emph{basic cycle of length $n$}.

It is shown in \cite{qha1} that the path coalgebra $\k Q$ admits a
graded Majid algebra structure if and only if the quiver $Q$ is a
Hopf quiver. Moreover, for a given Hopf quiver $Q=Q(G,R),$ if we fix
a Majid algebra structure on $\k Q_0=(\k G,\Phi)$ with
quasi-antipode $(S,\alpha,\beta),$ then the set of graded Majid
algebra structures on $\k Q$ with $\k Q_0=(\k
G,\Phi,S,\alpha,\beta)$ is in one-to-one correspondence with the
set of $(\k G,\Phi)$-Majid bimodule structures on $\k Q_1.$
We need to recall this correspondence here. One direction is clear. That is,
given a graded Majid algebra structure on the path coalgebra $\k Q$, then  $\k Q_1$ is a $\k Q_{0}$-Majid bimodule with module and comodule structures respectively defined by the multiplication and the comultiplication of $\k Q$.

Conversely, assume that $\k Q_1$ is a $\k Q_{0}$-Majid bimodule. We need to define a multiplication for any two paths in $\k Q$, which can be obtained as follows. Let $p\in \k Q$ be a path. An
{\it $n$-thin split} of $p$ is a sequence $(p_1, \ \cdots, \ p_n)$ of
vertices and arrows such that the concatenation $p_n \cdots p_1$ is
exactly $p.$ These $n$-thin splits are in one-to-one correspondence
with the $n$-sequences of $(n-l)$ 0's and $l$ 1's. Denote  by $D_l^n$ the set of such sequences. Clearly $|D_l^n|={n \choose l}.$ For
$d=(d_1, \ \cdots, \ d_n) \in D_l^n,$ the corresponding $n$-thin
split is written as $dp=((dp)_1, \ \cdots, \ (dp)_n),$ in which
$(dp)_i$ is a vertex if $d_i=0$ and an arrow if $d_i=1.$ Let
$\alpha=a_m \cdots a_1$ and $\beta=b_n \cdots b_1$ be paths of
length $m$ and $n$ respectively. Given $d \in D_m^{m+n}$, we let $\bar{d}
\in D_n^{m+n}$ be the complement sequence of $d$ obtained  by
replacing each 0 by 1 and each 1 by 0. Define an element
$$(\alpha \cdot \beta)_d=[(d\alpha)_{m+n}\cdot(\bar{d}\beta)_{m+n}] \cdots
[(d\alpha)_1\cdot(\bar{d}\beta)_1]$$ in $\k Q_{m+n},$ where
$[(d\alpha)_i\cdot(\bar{d}\beta)_i]$ is understood as the action of the $
\k Q_0$-Majid bimodule on $\k Q_1$ and the terms in different
brackets are put together by cotensor product, or equivalently
by concatenation. In terms of this notation, the formula of the
product of $\alpha$ and $\beta$ is given as follows:
\begin{equation}\label{eq;2}
\alpha \cdot \beta=\sum_{d \in D_m^{m+n}}(\alpha \cdot \beta)_d \ .
\end{equation}

Now let $H=H_{0}\oplus H_{1}\oplus \cdots$ be a coradically graded pointed Majid algebra. The \emph{Gabriel quiver} $Q(H)$ is defined
as follows. Its vertices are group-like elements of $H$, and the number of
arrows between two group-like elements, say $g$ and $h$, is equal to the number of linear independent
non-trivial $(h,g)$-skew primitive elements. Recall that $x$ is an $(h,g)$-skew primitive element if $\Delta(x)=g\otimes x+x\otimes h$
and is \emph{trivial} if $x=c(g-h)$ for some $c\in \k$. The
Gabriel quiver $Q(H)$ possesses the following properties:
\begin{itemize}
\item $Q(H)$ is a Hopf quiver;
\item The  $H_{0}$-Majid bimodule structure
on $H_{1}$  induces a $\k Q(H)_{0}$-Majid bimodule structure on $\k Q(H)_{1}$, and $\k Q(H)$ is hence a Majid algebra;
\item(Theorem of Gabriel's Type) $H$ is a large Majid subalgebra of $\k Q(H)$. By ``a large subalgebra" we mean that it contains the set of vertices and arrows of the Hopf quiver.
\end{itemize}
One may refer to \cite{qha1} for more detail.  The formula \eqref{eq;2} can help us to determine the multiplication of any two elements of $H$. We shall use this observation to study the structure of $A_{q}(\mathfrak{g})^{\ast}$ in the next section.

\subsection{Drinfeld double of a quasi-Hopf algebra.}
The construction of the Drinfeld double of a quasi-Hopf algebra is not a trivial generalization from Hopf to the quasi-Hopf
case. The double of a Hopf algebra $H$ is defined on $H\otimes H^{\ast}$, with $H$ and $H^{\ast}$ being subalgebras. If $H$ is a quasi-Hopf algebra, then
$H^{\ast}$ is not an associative algebra. Thus, one is at a loss to look for an associative algebra structure on $H\otimes H^{\ast}$, and might expect that the double should be some kind of hybrid object. Majid solved this problem in \cite{Maj}. He showed that there exists  a quantum double $D(H)$ as a quasi-Hopf algebra defined on $H\otimes H^*$. Hausser and Nill \cite{FN} gave a computable realization of $D(H)$ on $H\otimes H^{\ast}$. An explicit construction of $D(H)$  was obtained by Schauenburg \cite{Sch}. Here we recall Schauenburg's construction.

Let $(H,\M, \mu, \Delta, \varepsilon, \phi, S, \alpha, \beta)$ be a finite dimensional quasi-Hopf algebra. Let $\phi=\phi^{(1)}\otimes
\phi^{(2)}\otimes \phi^{(3)}=\sum X^{i}\otimes Y^{i}\otimes Z^{i}$ and
$\phi^{-1}=\phi^{(-1)}\otimes
\phi^{(-2)}\otimes \phi^{(-3)}=\sum \overline{X}^{i}\otimes \overline{Y}^{i}\otimes
\overline{Z}^{i}$. Define
\begin{gather}
 \mathbf{\gamma}:=\sum (S(U^{i})\otimes S(T^{i}))(\alpha \otimes \alpha)(V^{i}\otimes W^{i}),\label{eq;1.14}\\
             \mathbf{f}:=\sum (S\otimes S)(\D^{op}(\overline{X}^{i}))\cdot \mathbf{\gamma} \cdot \D(\overline{Y}^{i}\beta S(\overline{Z}^{i})),\label{eq;1.11}\\
              \mathbf{\chi}:=(\phi \otimes 1)(\D\otimes id\otimes id)(\phi^{-1}),\label{eq;1.12}\\
            \mathbf{\omega}:=(1\otimes 1\otimes 1\otimes \tau(\mathbf{f}^{-1}))(id\otimes \D\otimes S\otimes S)(\mathbf{\chi})(\phi\otimes 1\otimes 1),\label{eq;1.15}
\end{gather}
where $(1\otimes \phi^{-1})(id\otimes id\otimes \D)(\phi)=\sum T^{i}\otimes U^{i}\otimes V^{i}\otimes W^{i}$ and $\tau$ is the usual twist, i.e., $\tau(a\otimes b)=b\otimes a$.

As a linear space, $D(H)=H\otimes H^{\ast}$ and we write $h\bowtie \psi:=h\otimes \psi\in D(H)$. There are two canonical actions, denoted by $\rightharpoonup,\;\leftharpoonup$, of $H$
on $H^{\ast}$. By definition, for any $a,b\in H$ and $\psi\in H^{\ast}$
$$\rightharpoonup:\;\;H\otimes H^{\ast}\To H^{\ast}, \;\;\;\;(a\rightharpoonup \psi)(b)=\psi(ba),$$
  $$\leftharpoonup:\;\;H^{\ast}\otimes H\To H^{\ast}, \;\;\;\;( \psi\leftharpoonup a)(b)=\psi(ab).$$
Define a map $\textbf{T}:\;H^{\ast}\to D(H)$ by
\begin{equation}\textbf{T}(\psi)=\phi^{(1)}_{(2)}\bi S(\phi^{(2)})\alpha (\phi^{(3)}\rightharpoonup \psi\leftharpoonup \phi^{(1)}_{(1)}).\label{eq;1.13}
\end{equation}
With the above preparations, we are now able to describe  $D(H)$.

\begin{theorem}\cite[Thm. 6.3, 9.3]{Sch}\label{t1} Let $H$ be a finite dimensional quasi-Hopf algebra. The quasi-Hopf structure on $D(H)=H\otimes H^{\ast}$, such that $H$ is a quasi-Hopf subalgebra via the embedding $h\hookrightarrow h\bi \e$, is determined by the following:
\begin{enumerate}
\item[(i)] As an associative algebra, $D(H)$ is generated by $H$ and $\emph{\textbf{T}}(H^{\ast})$, and its multiplication is given by:
\begin{eqnarray*}&&(g\bi \varphi)(h\bi \psi)\\
&&=gh_{(1)(2)}\omega^{(3)}\bi (\omega^{(5)}\rightharpoonup \psi \leftharpoonup \omega^{(1)})(\omega^{(4)}S(h_{(2)})\rightharpoonup \varphi\leftharpoonup
h_{(1)(1)}\omega^{(2)});\;\;\;(\star)
\end{eqnarray*}
as a quasi-coalgebra, the comultiplication of $D(H)$ is given by:
\begin{eqnarray*}\D_{D}(\mathbf{T}(\psi))&=&
 \tilde{\phi}^{(2)}\mathbf{T}(\psi_{(1)}\leftharpoonup \tilde{\phi}^{(1)})\phi^{(-1)}\phi^{(1)}\\
 &\otimes& \tilde{\phi}^{(3)}
\phi^{(-3)}\mathbf{T}(\phi^{(3)}\rightharpoonup \psi_{(2)}\leftharpoonup \phi^{(-2)})\phi^{(2)},\;\;\;\;(\star\star)\end{eqnarray*}
for $g,h\in H$ and $\varphi,\psi\in H^{\ast}$, where $\tilde{\phi}$ is another copy of $\phi$.\\
\item[(ii)] The reassociator $\phi_{D}$,  the counit $\e_{D}$, the elements $\alpha_{D},\beta_{D}$ and the antipode $S_{D}$ are given by:
\begin{gather} \phi_{D}=\phi\bi \e,
\e_{D}(\mathbf{T}(\psi))=\psi(\phi^{(1)}S(\phi^{(2)})\alpha \phi^{(3)}),\\
\alpha_{D}=\alpha\bi \e,\;\;\;\;\beta_{D}=\beta\bi \e,\\
S_{D}(\mathbf{T}(\psi))=\mathbf{f}^{(2)}\mathbf{T}(\mathbf{f}^{(-2)}\rightharpoonup S^{-1}(\psi)\leftharpoonup \mathbf{f}^{(1)})\mathbf{f}^{(-1)},
\end{gather}
for $\psi\in H^{\ast}$.
\end{enumerate}
\end{theorem}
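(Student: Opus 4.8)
The plan is to obtain $D(H)$ from the Drinfeld center $\mathcal{Z}(\mathcal{C})$ (more precisely the left weak center) of the tensor category $\mathcal{C}=\Rep(H)$ of finite-dimensional left $H$-modules, whose associativity constraint is governed by $\phi$. Recall that an object of $\mathcal{Z}(\mathcal{C})$ is a pair $(V,\sigma)$ with $V\in\mathcal{C}$ and $\sigma=\{\sigma_X\colon V\otimes X\to X\otimes V\}_{X\in\mathcal{C}}$ a natural family of isomorphisms satisfying the hexagon identity in which the various $\phi$'s appear as prescribed; $\mathcal{Z}(\mathcal{C})$ is again a $\k$-linear abelian rigid tensor category, in fact braided. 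First I would check that $\mathcal{Z}(\mathcal{C})$ is finite and carries a fibre functor $\mathcal{Z}(\mathcal{C})\to\Rep(H)\to\mathrm{Vec}_{\k}$; by the Tannaka–Krein reconstruction theorem for quasi-Hopf algebras it then follows that $\mathcal{Z}(\mathcal{C})\simeq\Rep(A)$ for a unique finite-dimensional quasi-Hopf algebra $A$, and one \emph{defines} $D(H):=A$. The forgetful tensor functor $\mathcal{Z}(\mathcal{C})\to\mathcal{C}$ corresponds to the embedding $H\hookrightarrow D(H)$, $h\mapsto h\bowtie\e$, which already forces $\phi_D=\phi\bowtie\e$, $\alpha_D=\alpha\bowtie\e$, $\beta_D=\beta\bowtie\e$.

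Next I would identify $D(H)$ with $H\otimes H^{\ast}$ as a vector space by evaluating the half-braiding on the regular representation: for $(V,\sigma)\in\mathcal{Z}(\mathcal{C})$, the single component $\sigma_H\colon V\otimes H\to H\otimes V$ together with the $H$-action on $V$ encodes all of $\sigma$ by naturality, and dualising $\sigma_H$ produces on the reconstructed algebra the generating subspace $\mathbf{T}(H^{\ast})$; the map $\mathbf{T}$ of \eqref{eq;1.13} is precisely this coaction-to-action transform, the factors $\phi^{(1)}_{(2)}$, $S(\phi^{(2)})\alpha$, $\phi^{(3)}$ and the biactions $\rightharpoonup,\leftharpoonup$ being exactly what is needed to straighten out the $\phi$-twisted naturality of $\sigma$. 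The multiplication $(\star)$ is then forced: it is the composite of the half-braidings of two objects of $\mathcal{Z}(\mathcal{C})$ reassociated by $\phi$, and after transporting along $\mathbf{T}$ this composite is measured by the element $\omega$ of \eqref{eq;1.15} (built from $\chi$ of \eqref{eq;1.12} and $\mathbf{f}$ of \eqref{eq;1.11}), which thereby acquires its meaning as the universal reassociating datum. Likewise the comultiplication $(\star\star)$ records how a half-braiding on a tensor product $V\otimes W$ decomposes into half-braidings on $V$ and on $W$ — a pentagon computation in $\mathcal{C}$ — which is why two copies $\phi,\tilde\phi$ and the legs of $\phi^{-1}$ appear. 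Finally the quasi-antipode comes from rigidity: the dual object $(V,\sigma)^{\ast}$ carries the half-braiding obtained from $\sigma^{-1}$ by conjugating with evaluation and coevaluation, and pushing the coherence morphisms through the quasi-antipode $(S,\alpha,\beta)$ of $H$ produces $\gamma$ of \eqref{eq;1.14}, then $\mathbf{f}$, and then the stated formula for $S_D(\mathbf{T}(\psi))$, with $\e_D(\mathbf{T}(\psi))=\psi(\phi^{(1)}S(\phi^{(2)})\alpha\phi^{(3)})$ being the trivial half-braiding evaluated on the unit object.

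A more computational alternative (Hausser–Nill) bypasses the center: one verifies that $H^{\ast}$, with the two actions $\rightharpoonup,\leftharpoonup$ and a $\phi$-twisted product, is a module algebra over $H$ in a suitably lax sense, and that the diagonal crossed product $H\bowtie H^{\ast}$ with product declared by $(\star)$ is associative; the coalgebra and quasi-antipode data are then written down and all the quasi-Hopf axioms checked directly. Either way the hard part is the same, namely associativity of $(\star)$ together with quasi-coassociativity of $(\star\star)$ — equivalently, that $\omega$ satisfies the relevant $4$-cocycle-type identity. This reduces, after a long but mechanical unwinding, to repeated application of the pentagon axiom $(id\otimes id\otimes\Delta)(\phi)(\Delta\otimes id\otimes id)(\phi)=(1\otimes\phi)(id\otimes\Delta\otimes id)(\phi)(\phi\otimes 1)$ and the quasi-antipode axioms for $H$; there is no conceptual difficulty, but the bookkeeping with the many legs $\phi^{(\pm i)}$ is the genuine obstacle, and it is exactly this that Schauenburg carries out in \cite{Sch}. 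For the purposes of the present paper we take Theorem \ref{t1} as given.
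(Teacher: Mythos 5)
Your treatment is essentially the same as the paper's: Theorem \ref{t1} is not proved there but quoted from Schauenburg \cite{Sch} (with some misprints in \cite{Sch,FN} corrected, cf.\ Remark 2.7), and your proposal, after an accurate outline of the center/reconstruction route and of the Hausser--Nill diagonal crossed product alternative, likewise defers the actual verification to \cite{Sch}. So there is nothing to object to, beyond noting that your sketch is a plan of Schauenburg's proof rather than an independent one, which matches how the paper itself uses the result.
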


\begin{remark} \emph{(1)} It is easy to see that $1\bi \e$ is the unit element of $D(H)$ by the formula $(\star)$. Moreover, as a special case of the product, we have:
\begin{equation}\label{eq;4}(1\bi \varphi)(h\bi \e)=h_{(1)(2)}\bi S(h_{(2)})\rightharpoonup \varphi\leftharpoonup h_{(1)(1)},
\end{equation}
for $h\in H$ and $\varphi\in H^{\ast}$.

 \emph{(2)}  In the process of our computations, we find that there are some errors or misprints in \cite{Sch} and \cite{FN}. Especially, there are misprints in the expression of the element $\mathbf{f}$ given both in \cite{Sch} and \cite{FN}, in the expression of the element $\chi$ given in \cite{Sch} and in the expression of the comultiplication formula given in \cite{Sch}. The correct versions are \eqref{eq;1.11}, \eqref{eq;1.12} and \emph{($\star\star$)}.
\end{remark}

\subsection{$3$-cocycles} Let $G$ be a group and $(B_{\bullet},\partial_{\bullet})$ its bar resolution. By applying $\Hom_{\mathbbm{Z}G}(-,\k^{\ast})$
we get a complex $(B^{\ast}_{\bullet},\partial^{\ast}_{\bullet})$, where $\k^{\ast}=\k\setminus \{0\}$ is a trivial
$G$-module. Later on, we will encounter the following problem: Given a $3$-cocycle of the complex $(B^{\ast}_{\bullet},\partial^{\ast}_{\bullet})$,
we have to determine whether it is a $3$-coboundary or not. In this subsection, we solve this problem in case $G$ is a finite abelian group.

Now let $G$ be a finitely generated abelian group. Thus $G\cong \mathbbm{Z}_{m_{1}}\times\cdots \times\mathbbm{Z}_{m_{k}}$.
For every $\mathbbm{Z}_{m_{i}}$, we fix a generator $g_{i}$  throughout of this paper for $1\leq i\leq k$. It is known that the
following periodic sequence is a projective resolution for the
trivial $\mathbbm{Z}_{m_{i}}$-module $\mathbbm{Z}$ \cite[Sec. 6.2]{Wei}:
\begin{equation}\label{eq;3}\cdots\longrightarrow \mathbbm{Z}\mathbbm{Z}_{m_{i}}\stackrel{T_{i}}\longrightarrow
\mathbbm{Z}\mathbbm{Z}_{m_{i}}\stackrel{N_{i}}\longrightarrow\mathbbm{Z}\mathbbm{Z}_{m_{i}}\stackrel{T_{i}}\longrightarrow
\mathbbm{Z}\mathbbm{Z}_{m_{i}}\stackrel{N_{i}}\longrightarrow
\mathbbm{Z}\longrightarrow 0,\end{equation}
where $T_{i}=g_{i}-1$ and $N_{i}=\sum_{j=0}^{m_{i}-1}g_{i}^{j}$.

We construct the tensor product of the above periodic resolutions for
$G$. Let $K_{\bullet}$ be the following complex of projective (in fact,
free) $\mathbbm{Z}G$-modules. For
each sequence  $a_{1},\ldots,a_{k}$ of nonnegative integers, let $\Psi(a_{1},\ldots,a_{k})$ be a free
generator in degree $a_{1}+\cdots+a_{k}$. Define:
$$K_{m}:=\bigoplus_{a_{1}+\cdots+a_{k}=m} (\mathbbm{Z}G)\Psi(a_{1},\ldots,a_{k}),$$
and
$$d_{i}(\Psi(a_{1},\ldots,a_{k}))=\left \{
\begin{array}{lll} 0, &\;\;\;\;a_{i}=0,
\\ (-1)^{\sum_{l<i}a_{l}}N_{i}\Psi(a_{1},\ldots,a_{i}-1,\ldots,a_{k}), &  \ 0\neq a_{i}\ \mathrm{even},\\ (-1)^{\sum_{l<i}a_{l}}T_{i}\Psi(a_{1},\ldots,a_{i}-1,\ldots,a_{k}), & \  a_{i}\ \textrm{odd},
\end{array}\right.$$
for $1\leq i\leq k$. The differential $d$ is defined to be $d_{1}+\cdots +d_{k}$.

\begin{lemma}\label{l1.3} $(K_{\bullet},d)$ is a free resolution of the trivial $\mathbbm{Z}G$-module $\mathbbm{Z}$.
\end{lemma}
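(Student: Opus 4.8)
The plan is to prove this by the standard Künneth-type argument: show that $(K_\bullet,d)$ is the total complex of the tensor product (over $\Z$) of the $k$ periodic resolutions in \eqref{eq;3}, one for each cyclic factor $\Z_{m_i}$, and then invoke the fact that a tensor product of projective resolutions of the trivial modules $\Z$ over $\Z\Z_{m_i}$ is a projective resolution of $\Z$ over $\Z G = \Z\Z_{m_1}\otimes_\Z\cdots\otimes_\Z\Z\Z_{m_k}$. More precisely, writing $P^{(i)}_\bullet$ for the resolution \eqref{eq;3} with $P^{(i)}_a=\Z\Z_{m_i}$ for all $a\ge 0$, the generator $\Psi(a_1,\dots,a_k)$ should be identified with the elementary tensor sitting in degree $a_1+\cdots+a_k$ of $P^{(1)}_{a_1}\otimes_\Z\cdots\otimes_\Z P^{(k)}_{a_k}$, and one checks that under this identification the differential $d=d_1+\cdots+d_k$ is exactly the total differential of the tensor product complex, the signs $(-1)^{\sum_{l<i}a_l}$ being the usual Koszul signs, and $d_i$ acting as $1\otimes\cdots\otimes\partial^{(i)}\otimes\cdots\otimes 1$ where $\partial^{(i)}$ alternates between $N_i$ (in even degree) and $T_i$ (in odd degree) as in \eqref{eq;3}.

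The key steps, in order, are: (i) recall that each $P^{(i)}_\bullet$ is a complex of free (hence flat) $\Z\Z_{m_i}$-modules that is a resolution of $\Z$, i.e.\ $H_0(P^{(i)}_\bullet)=\Z$ and $H_a(P^{(i)}_\bullet)=0$ for $a>0$; (ii) form the total complex $T_\bullet$ of $P^{(1)}_\bullet\otimes_\Z\cdots\otimes_\Z P^{(k)}_\bullet$, which is a complex of $\Z G$-modules since $\Z G\cong\bigotimes_i\Z\Z_{m_i}$, and note each $T_m$ is a finite direct sum of modules of the form $\Z\Z_{m_1}\otimes_\Z\cdots\otimes_\Z\Z\Z_{m_k}=\Z G$, hence free; (iii) apply the Künneth spectral sequence (or iterated Künneth formula) for the homology of a tensor product of complexes of flat modules over $\Z$ — since each factor has homology concentrated in degree $0$ and equal to $\Z$, and $\mathrm{Tor}^\Z_1(\Z,\Z)=0$, all the $\mathrm{Tor}$ correction terms vanish and one gets $H_m(T_\bullet)=\Z$ if $m=0$ and $0$ otherwise; (iv) verify carefully that the explicit $(K_\bullet,d)$ written in the lemma coincides with $(T_\bullet,d_{\mathrm{tot}})$ via the bijection of bases $\Psi(a_1,\dots,a_k)\leftrightarrow$ the obvious generator, matching the case distinction ($a_i=0$, $a_i\ne 0$ even, $a_i$ odd) to the pattern of \eqref{eq;3} and matching the sign conventions.

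The main obstacle is step (iv): the bookkeeping of signs. One must confirm that the Koszul sign rule for the total differential of a multi-fold tensor product — namely that the piece acting on the $i$-th factor picks up the sign $(-1)^{a_1+\cdots+a_{i-1}}$ determined by the degrees of the factors to its left — reproduces exactly the exponent $\sum_{l<i}a_l$ appearing in the definition of $d_i$, and that with these signs $d^2=d_1^2+\cdots+d_k^2+\sum_{i<j}(d_id_j+d_jd_i)=0$. The squares $d_i^2=0$ follow from $N_iT_i=T_iN_i=0$ in $\Z\Z_{m_i}$ (the defining feature of \eqref{eq;3}), and the cross terms $d_id_j+d_jd_i=0$ follow from the sign discrepancy between applying $d_i$ then $d_j$ versus $d_j$ then $d_i$. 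Once the identification with the tensor product complex is in place, everything else is the standard homological algebra recalled in steps (i)--(iii); alternatively, for a self-contained argument avoiding the Künneth spectral sequence, one can instead induct on $k$, using at each stage that tensoring the (already established) resolution $K^{(k-1)}_\bullet$ of $\Z$ over $\Z\Z_{m_1}\otimes\cdots\otimes\Z\Z_{m_{k-1}}$ with the flat complex $P^{(k)}_\bullet$ over $\Z\Z_{m_k}$ preserves exactness, since tensoring over $\Z$ with a flat $\Z$-module is exact and the total complex of a double complex with exact rows (or columns) concentrated appropriately is exact — this inductive route makes the sign bookkeeping more transparent and is probably the cleaner way to write it up.
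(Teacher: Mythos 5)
Your proposal is correct and follows essentially the same route as the paper: identify $(K_{\bullet},d)$ with the (total complex of the) tensor product of the periodic resolutions \eqref{eq;3} and conclude by the K\"unneth formula for complexes, which is exactly the paper's one-line argument citing (3.6.3) in \cite{Wei}. The extra detail you supply (Koszul signs, $d^2=0$, the flatness/Tor-vanishing check, and the inductive alternative) just fills in what the paper leaves implicit.
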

\begin{proof} Observer that $(K_{\bullet},d)$ is exactly the tensor product of the complexes \eqref{eq;3}. Thus
the lemma follows from   the K\"unneth formula for
complexes (see (3.6.3) in \cite{Wei}).
\end{proof}

For convenience, we fix the following notations. For any $1\leq r\leq k$, define $\Psi_{r}:=\Psi(0,\ldots,1,\ldots,0)$ where $1$ lies in the $r$-th position.
For any $1\leq r\leq s\leq k$, define $\Psi_{r,s}:=\Psi(0,\ldots,1,\ldots,1,\ldots,0)$ where $1$ lies in both the $r$-th and the $s$-th position if $r<s$ and
 $\Psi_{r,r}:=\Psi(0,\ldots,2,\ldots,0)$ where $2$ lies in the $r$-th position. Similarly, one can define $\Psi_{r,s,t},\Psi_{r,s,s},\Psi_{r,r,s}$ and $\Psi_{r,r,r}$ for $1\leq r\leq k$, $1\leq r<s\leq k$ and $1\leq r<s<r\leq k$. One could  even define $\Psi_{i,j,s,t},\Psi_{i,i,j,s}$, $\Psi_{i,j,s,s}$, $\Psi_{i,j,j,s}$, $\Psi_{i,i,j,j}$, $\Psi_{i,i,i,j}$, $\Psi_{i,j,j,j}$,
 and $\Psi_{i,i,i,i}$  for $1\leq i\leq k$, $1\leq i<j\leq k$, $1\leq i<j<s\leq k$ and $1\leq i<j<s<t\leq k$ respectively.
 Now it it clear that any cochain $f\in \Hom_{\mathbbm{Z}G}(K_{3},\k^{\ast})$ is uniquely determined by its values on $\Psi_{r,s,t},\Psi_{r,s,s},\Psi_{r,r,s}$
 and $\Psi_{r,r,r}$ for $1\leq r\leq k$, $1\leq r<s\leq k$ and $1\leq r<s<t\leq k$.  For such numbers, we let $f_{r,s,t}=f(\Psi_{r,s,t}),f_{r,s,s}=f(\Psi_{r,s,s}),f_{r,r,s}=f(\Psi_{r,r,s})$ and $f_{r,r,r}=f(\Psi_{r,r,r})$.

\begin{lemma}\label{l1.4} The $3$-cochain $f\in \Hom_{\mathbbm{Z}G}(K_{3},\k^{\ast})$ is a cocycle if and only if for all $1\leq r\leq k$, $1\leq r<s\leq k$ and $1\leq r<s<t\leq k$,
\begin{equation}\label{eq;1.4} f_{r,r,r}^{m_{r}}=1,\;\;f_{r,s,s}^{m_{r}}f_{r,r,s}^{m_{s}}=1,\;\;f_{r,s,t}^{m_{r}}=f_{r,s,t}^{m_{s}}=f_{r,s,t}^{m_{t}}=1.
\end{equation}
\end{lemma}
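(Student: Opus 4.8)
The plan is to compute the differential $d^* = \partial^*_4 \colon \Hom_{\Z G}(K_3,\k^*) \to \Hom_{\Z G}(K_4,\k^*)$ explicitly and read off exactly when it kills $f$. Since $\k^*$ is a trivial $G$-module, for any free generator $\Psi(a_1,\ldots,a_k)$ of $K_4$ we have $(d^*f)(\Psi(a_1,\ldots,a_k)) = f(d\Psi(a_1,\ldots,a_k))$, where in the target we replace each element $N_i = \sum_{j=0}^{m_i-1} g_i^j$ acting on a generator by multiplication by $m_i$ (because $g_i$ acts trivially on $\k^*$ and there are $m_i$ summands) and each $T_i = g_i - 1$ by $0$ (for the same reason, $g_i - 1$ acts as $1 - 1 = 0$). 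So the whole computation reduces to bookkeeping of signs and of which face maps $d_i$ are of ``$N$-type'' versus ``$T$-type'' versus zero.

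First I would enumerate the degree-$4$ free generators that can receive a nonzero contribution, namely those $\Psi(a_1,\ldots,a_k)$ with $\sum a_i = 4$ that differ in one coordinate from a degree-$3$ generator via an $N$-type map; concretely these are $\Psi_{r,r,r,r}$, $\Psi_{r,r,r,s}$, $\Psi_{r,s,s,s}$, $\Psi_{r,r,s,s}$, $\Psi_{r,r,s,t}$, $\Psi_{r,s,s,t}$, $\Psi_{r,s,t,t}$, and $\Psi_{r,s,t,u}$. For each, apply $d = d_1 + \cdots + d_k$: a summand $d_i\Psi(a_1,\ldots,a_k)$ is zero unless $a_i > 0$, contributes an $N_i$-term (hence a factor $m_i$ on the $\k^*$ side) when $a_i$ is even, and contributes a $T_i$-term (hence $0$ on the $\k^*$ side) when $a_i$ is odd. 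Thus only the coordinates in which $a_i$ is a positive even number survive. Running through the list: $\Psi_{r,r,r,r}$ (coordinate value $4$, even) gives $(d^*f)(\Psi_{r,r,r,r}) = f_{r,r,r}^{m_r}$, up to a sign that is $\pm1$ and hence irrelevant after raising to get a multiplicative relation — actually since the only surviving face has $a_r = 4$ even, we get the single condition $f_{r,r,r}^{m_r} = 1$. For $\Psi_{r,r,s,s}$ the two surviving faces are $d_r$ (peeling $a_r = 2$, even, factor $m_r$, landing on $\Psi_{r,s,s}$) and $d_s$ (peeling $a_s = 2$, even, factor $m_s$, landing on $\Psi_{r,r,s}$) with opposite signs coming from the Koszul sign $(-1)^{\sum_{l<i}a_l}$, giving the relation $f_{r,s,s}^{m_r} f_{r,r,s}^{m_s} = 1$. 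For $\Psi_{r,r,s,t}$ (with $r<s<t$) only $d_r$ survives ($a_r=2$ even; $a_s = a_t = 1$ odd give $T$-terms, i.e. $0$), yielding $f_{r,s,t}^{m_r} = 1$; symmetrically $\Psi_{r,s,s,t}$ gives $f_{r,s,t}^{m_s} = 1$ and $\Psi_{r,s,t,t}$ gives $f_{r,s,t}^{m_t} = 1$. Every remaining generator in the list ($\Psi_{r,r,r,s}$, $\Psi_{r,s,s,s}$, $\Psi_{r,s,t,u}$, etc.) has \emph{no} coordinate equal to a positive even number, so $d^*f$ vanishes on it automatically and imposes no condition. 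Assembling all constraints gives precisely \eqref{eq;1.4}, and conversely if \eqref{eq;1.4} holds then $d^*f = 0$ on every generator, so $f$ is a cocycle.

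The only genuinely delicate point is the sign bookkeeping: one must check that in the case of $\Psi_{r,r,s,s}$ the two surviving faces carry \emph{opposite} overall signs, so that the relation is $f_{r,s,s}^{m_r} f_{r,r,s}^{m_s} = 1$ rather than $f_{r,s,s}^{m_r} f_{r,r,s}^{-m_s} = 1$ (these are of course equivalent as stated since one can replace $f_{r,r,s}$ by its inverse, but one should be sure the stated form is correct), and that in the single-even-coordinate cases the sign is a unit and therefore immaterial once we pass to the multiplicative relation $(\cdot)^{m} = 1$. This is a routine but careful application of the Koszul sign rule inherent in the tensor-product resolution $K_\bullet = \bigotimes_i (\text{periodic resolution of }\Z\text{ over }\Z\Z_{m_i})$ from Lemma \ref{l1.3}; I would verify it by writing out $d$ on $\Psi(a_1,\ldots,a_k)$ for the handful of relevant $(a_1,\ldots,a_k)$ and tracking $(-1)^{\sum_{l<i}a_l}$ directly. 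No other step presents an obstacle.
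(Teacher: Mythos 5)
Your proposal is essentially the paper's own proof: evaluate $d^{\ast}f$ on the degree-$4$ generators, use that $T_i$ acts trivially on $\k^{\ast}$ so only even positive coordinates contribute (each through $N_i$, i.e.\ an exponent $m_i$), and read off the conditions case by case, the remaining shapes $\Psi_{r,r,r,s}$, $\Psi_{r,s,s,s}$, $\Psi_{r,s,t,u}$ imposing nothing. One correction to your sign discussion: for $\Psi_{r,r,s,s}$ the two surviving Koszul signs are \emph{equal}, not opposite, since $(-1)^{\sum_{l<r}a_l}=(-1)^0=+1$ and $(-1)^{\sum_{l<s}a_l}=(-1)^{a_r}=(-1)^2=+1$; it is precisely this equality that yields $f_{r,s,s}^{m_r}f_{r,r,s}^{m_s}=1$, whereas opposite signs would yield $f_{r,s,s}^{m_r}f_{r,r,s}^{-m_s}=1$, and these are genuinely different conditions on a fixed $f$ (they agree only when $f_{r,r,s}^{2m_s}=1$), so the parenthetical claim that the two forms are ``of course equivalent'' should be dropped. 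Signs are indeed immaterial only in the single-term cases, as you say, and as the paper's own intermediate relation $f_{i,j,t}^{-m_j}=1$ for $\Psi_{i,j,j,t}$ (where $\sum_{l<j}a_l=1$ is odd) illustrates; with the $\Psi_{r,r,s,s}$ sign computed correctly, the verification you outline is exactly the computation in the paper.
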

\begin{proof} The proof follows direct computations. By definition, $f$ is a $3$-cocycle if and only if $1=d^{\ast}(f)(\Psi_{i,j,s,t})=f(d(\Psi_{i,j,s,t}))$ for all
 $1\leq i\leq j\leq s\leq t\leq k$.
For any $a\in \k^{\ast}$, it is clear that $T_{i}\cdot a=1$ since $\k^{\ast}$ is considered as a trivial $G$-module. This means that we only need to consider the condition $1=d^{\ast}(f)(\Psi_{i,j,s,t})$
in the cases: $i=j=s=t$, $i=j<s<t$, $i<j=s<t$, $i<j<s=t$ and $i=j<s=t$ respectively. In case $i=j=s=t$, we have $1=d^{\ast}(f)(\Psi_{i,i,i,i})=f(N_{i}\Psi_{i,i,i})=N_{i}\cdot f_{i,i,i}=f_{i,i,i}^{m_{i}}$.
Similarly, if  $i=j<s<t$, we have $f_{i,s,t}^{m_{i}}=1$. If $i<j=s<t$, then we have $f_{i,j,t}^{-m_{j}}=1$. In case $i<j<s=t$, we then have $f_{i,j,s}^{m_{s}}=1$. Finally, if $i=j<s=t$, we have
$f_{i,s,s}^{m_{i}}f_{i,i,s}^{m_{s}}=1$.  Now it is easy to see  that these relations are  the same as in Equation \eqref{eq;1.4}.
\end{proof}

\begin{lemma}\label{l1.5}The $3$-cochain $f\in \Hom_{\mathbbm{Z}G}(K_{3},\k^{\ast})$ is a coboundary if and only if for all $1\leq i<j\leq k$, there are $g_{i,j}\in \k^{\ast}$ such that
\begin{equation} f_{i,i,j}=g_{i,j}^{m_{i}},\;\;f_{i,j,j}=g_{i,j}^{-m_{j}},\ {\rm and}\ \  f_{l,l,l}=1\;\;f_{r,s,t}=1,
\end{equation} for $1\leq l\leq k$, and $1\leq r<s<t\leq k$.
\end{lemma}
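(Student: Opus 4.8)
The plan is to compute $d^{\ast}$ explicitly on the degree-$3$ generators, just as in the proof of Lemma \ref{l1.4}, and match the result against the stated parametrization. First I would recall that a $3$-cochain $f$ is a coboundary precisely when $f = d^{\ast}(h)$ for some $h \in \Hom_{\mathbbm{Z}G}(K_{2},\k^{\ast})$. By the construction of $K_{\bullet}$ as the tensor product of the periodic resolutions \eqref{eq;3}, the module $K_{2}$ has free generators $\Psi_{r,r}$ (for $1 \le r \le k$) and $\Psi_{r,s}$ (for $1 \le r < s \le k$), so such an $h$ is determined by the scalars $h_{r,r} := h(\Psi_{r,r})$ and $h_{r,s} := h(\Psi_{r,s})$. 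The differential $d = d_{1}+\cdots+d_{k}$ acts on the degree-$3$ generators by the rules given before Lemma \ref{l1.3}: on $\Psi_{r,r,r}$ it produces $T_{r}\Psi_{r,r}$; on $\Psi_{r,r,s}$ (with $r<s$) a combination of $N_{r}\Psi_{r,s}$-type and $T_{s}\Psi_{r,r}$-type terms; on $\Psi_{r,s,s}$ a combination of $T_{r}\Psi_{s,s}$-type and $N_{s}\Psi_{r,s}$-type terms; and on $\Psi_{r,s,t}$ (with $r<s<t$) only $T$-type terms. Since $\k^{\ast}$ is a trivial $G$-module, every $T_{i}$ acts as $g_{i}-1 \mapsto 0$ (multiplicatively, as $1$), while $N_{i}$ acts as multiplication by $m_{i}$ in the exponent.

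Carrying this out, I expect to find: $d^{\ast}(h)(\Psi_{r,r,r}) = h(T_{r}\Psi_{r,r}) = 1$; $d^{\ast}(h)(\Psi_{r,s,t}) = 1$ for $r<s<t$ (all contributions are $T$-type); $d^{\ast}(h)(\Psi_{r,r,s}) = h(N_{s}\Psi_{r,s})^{\pm1} \cdot (\text{$T$-type}) = h_{r,s}^{\pm m_{s}}$; and $d^{\ast}(h)(\Psi_{r,s,s}) = h(N_{r}\Psi_{r,s})^{\pm1} \cdot (\text{$T$-type}) = h_{r,s}^{\pm m_{r}}$. I would then read off the precise signs from the definition of $d_{i}$ (the sign $(-1)^{\sum_{l<i}a_{l}}$ on $\Psi_{r,r,s}$ has $\sum_{l<s}a_{l} = 2$, giving $+$, so $d_{s}\Psi_{r,r,s} = N_{s}\Psi_{r,r}$... wait, one must be careful: $d_{s}$ lowers the $s$-index, so $d_{s}\Psi_{r,r,s} = \pm N_{s}\Psi_{r,r}$, not $\Psi_{r,s}$; the term hitting $\Psi_{r,s}$ comes from $d_{r}$ acting on the \emph{odd} $r$-slot of $\Psi_{r,r,s}$, i.e. $d_{r}\Psi_{r,r,s} = T_{r}\Psi_{r,s}$, which dies). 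Thus one actually gets $d^{\ast}(h)(\Psi_{r,r,s}) = h(N_{s}\Psi_{r,r})$ — but $\Psi_{r,r}$ has even $r$-slot, so this needs rechecking against whether $N_{s}\Psi_{r,r}$ is itself $d$ of something; the cleanest route is simply to write out $d(\Psi_{r,r,s})$, $d(\Psi_{r,s,s})$, $d(\Psi_{r,r,r})$, $d(\Psi_{r,s,t})$ term by term using the three-case formula and then apply $f$. After this bookkeeping, setting $g_{i,j} := h_{i,j}$ and comparing with Lemma \ref{l1.4}'s cocycle conditions shows that the image of $d^{\ast}$ on $3$-cochains is exactly $\{f : f_{l,l,l}=1,\ f_{r,s,t}=1 \text{ for } r<s<t,\ f_{i,i,j}=g_{i,j}^{\pm m_{i}},\ f_{i,j,j}=g_{i,j}^{\mp m_{j}} \text{ for some } g_{i,j}\in\k^{\ast}\}$, which is the claimed description (up to fixing the sign convention so that the exponents read $m_{i}$ and $-m_{j}$ as stated).

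For the converse direction — that every $f$ of the stated form is a coboundary — I would simply reverse the computation: given the scalars $g_{i,j}$, define $h \in \Hom_{\mathbbm{Z}G}(K_{2},\k^{\ast})$ by $h(\Psi_{i,j}) := g_{i,j}$ for $i<j$ and $h(\Psi_{r,r}):=1$, and verify directly that $d^{\ast}(h) = f$ using the formulas just derived. This is purely mechanical once the differential has been pinned down. The main obstacle I anticipate is purely the sign/index bookkeeping: one must be scrupulous about (i) which $d_{i}$ contributes to which degree-$2$ generator when lowering an index, (ii) the parity of the slot being lowered (even slots give $N_{i}$, odd slots give $T_{i}$), and (iii) the sign $(-1)^{\sum_{l<i}a_{l}}$ — getting any of these wrong flips an $m_{i}$ to $-m_{i}$ and breaks the match with the stated formula. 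There is no conceptual difficulty beyond this; the K\"unneth-based resolution $K_{\bullet}$ was precisely engineered in Lemma \ref{l1.3} to make these computations finite and explicit, so the proof is a short direct verification once the degree-$3$ differentials are written out.
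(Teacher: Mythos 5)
Your overall strategy is exactly the paper's: evaluate $d$ on the degree-$3$ generators, use that $T_{i}$ acts trivially on $\k^{\ast}$ while $N_{i}$ acts as the $m_{i}$-th power, and read off that the image of $d^{\ast}$ on $3$-cochains is precisely the family described in the statement (the converse then being immediate because a cochain on $K_{3}$ is freely determined by its values on the generators). The paper's proof is nothing more than this computation, namely $d(\Psi_{i,i,j})=N_{i}\Psi_{i,j}+T_{j}\Psi_{i,i}$ and $d(\Psi_{i,j,j})=T_{i}\Psi_{j,j}-N_{j}\Psi_{i,j}$, whence $f_{i,i,j}=g_{i,j}^{m_{i}}$ and $f_{i,j,j}=g_{i,j}^{-m_{j}}$, together with $d^{\ast}(g)(\Psi_{l,l,l})=d^{\ast}(g)(\Psi_{r,s,t})=1$.

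However, the bookkeeping you actually wrote down is backwards and, taken literally, would not reproduce the stated exponents. In $\Psi_{r,r,s}$ the $r$-slot carries the entry $2$ (even) and the $s$-slot the entry $1$ (odd); hence $d_{r}\Psi_{r,r,s}=N_{r}\Psi_{r,s}$ (this is the term that survives and produces $g_{r,s}^{m_{r}}$), while $d_{s}\Psi_{r,r,s}=T_{s}\Psi_{r,r}$, which dies. Your parenthetical ``correction'' asserts the opposite on both counts ($d_{s}\Psi_{r,r,s}=\pm N_{s}\Psi_{r,r}$ and $d_{r}\Psi_{r,r,s}=T_{r}\Psi_{r,s}$ dying), which would make $f_{r,r,s}$ depend on $g_{r,r}$ with exponent $\pm m_{s}$ instead of on $g_{r,s}$ with exponent $m_{r}$, and the match with the lemma would break. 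Likewise for $\Psi_{r,s,s}$: the odd slot is $r$ and the even slot is $s$, with sign $(-1)^{1}$ on $d_{s}$, giving $d(\Psi_{r,s,s})=T_{r}\Psi_{s,s}-N_{s}\Psi_{r,s}$ and so $f_{r,s,s}=g_{r,s}^{-m_{s}}$. You do flag the uncertainty and defer to writing out the differentials term by term, so the gap is repairable by exactly the computation you postpone; but as written, the parity assignments (even slot $\Rightarrow N$, odd slot $\Rightarrow T$, applied to the actual entries $2$ and $1$ of the multi-index) are the one piece that must be redone before the proof is correct.
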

\begin{proof} By definition, $f$ is a coboundary if and only if $f=d^{\ast}(g)$ for some $2$-cochain $g\in \Hom_{\mathbbm{Z}G}(K_{2},\k^{\ast})$. For any $1\leq i\leq j\leq k$,
let $g_{i,j}:=g(\Psi_{i,j})$. Since $T_{l}\cdot a=1$ for any $a\in \k^{\ast}$, we have $d^{\ast}(g)(\Psi_{r,s,t})=d^{\ast}(g)(\Psi_{l,l,l})=1$ for $1\leq r<s<t\leq k$ and $1\leq l\leq k$. Now for all $1\leq i<j\leq k$, $f_{i,i,j}=d^{\ast}(g)(\Psi_{i,i,j})=g(N_{i}\Psi_{i,j}+T_{j}\Psi_{i,i})=g_{i,j}^{m_{i}}$ and $f_{i,j,j}=d^{\ast}(g)(\Psi_{i,j,j})=g(T_{i}\Psi_{j,j}-N_{j}\Psi_{i,j})=g_{i,j}^{-m_{j}}$.
\end{proof}

Lemma \ref{l1.5} provides us an easy way to determine when a $3$-cocycle of the complex $(K_{\bullet}^{\ast},d^{\ast})$ is a $3$-coboundary.
For the bar resolution, it is sufficient to give a chain map from $(K_{\bullet},d_{\bullet})$
to $(B_{\bullet},\partial_{\bullet})$. We define the following three  morphisms of $\mathbbm{Z}G$-modules:
\begin{eqnarray*}
F_{1}: &&K_{1}\To B_{1},\;\;\;\;\Psi_r\mapsto [g_r];\\
F_{2}: &&K_{2}\To B_{2},\\
&&\Psi_{r,s}\mapsto [g_r,g_s]-[g_s,g_r],\\
&&\Psi_{r,r}\mapsto \sum_{l=0}^{m_{r}-1}[g_{r}^{l},g_r];\\
F_{3}: &&K_{3}\To B_{3},\\
&&\Psi_{r,s,t}\mapsto[g_r,g_s,g_t]-[g_s,g_r,g_t]-[g_r,g_t,g_s],\\
&&\;\;\;\;\;\;\;\;\;\;\;\;\;\;[g_t,g_r,g_s]+[g_s,g_t,g_r]-[g_t,g_s,g_r],\\
&&\Psi_{r,r,s}\mapsto \sum_{l=0}^{m_{r}-1}([g_{r}^{l},g_r,g_s]-[g_{r}^{l},g_s,g_r]+[g_s,g_{r}^{l},g_r]),\\
&&\Psi_{r,s,s}\mapsto \sum_{l=0}^{m_{s}-1}([g_{r},g_{s}^{l},g_s]-[g_{s}^{l},g_r,g_s]+[g_{s}^{l},g_{s},g_r]),\\
&&\Psi_{r,r,r}\mapsto \sum_{l=0}^{m_{r}-1}[g_{r},g_{r}^{l},g_r],
\end{eqnarray*}
for $0\leq r\leq k$, $0\leq r<s\leq k$ and $0\leq r<s<t\leq k$.

\begin{lemma}\label{l1.6} The following diagram is commutative:

\begin{figure}[hbt]
\begin{picture}(150,50)(50,-40)
\put(0,0){\makebox(0,0){$ \cdots$}}\put(10,0){\vector(1,0){20}}\put(40,0){\makebox(0,0){$K_{3}$}}
\put(50,0){\vector(1,0){20}}\put(80,0){\makebox(0,0){$K_{2}$}}
\put(90,0){\vector(1,0){20}}\put(120,0){\makebox(0,0){$K_{1}$}}
\put(130,0){\vector(1,0){20}}\put(160,0){\makebox(0,0){$K_{0}$}}
\put(170,0){\vector(1,0){20}}\put(200,0){\makebox(0,0){$\mathbbm{Z}$}}
\put(210,0){\vector(1,0){20}}\put(240,0){\makebox(0,0){$0$}}

\put(0,-40){\makebox(0,0){$ \cdots$}}\put(10,-40){\vector(1,0){20}}\put(40,-40){\makebox(0,0){$B_{3}$}}
\put(50,-40){\vector(1,0){20}}\put(80,-40){\makebox(0,0){$B_{2}$}}
\put(90,-40){\vector(1,0){20}}\put(120,-40){\makebox(0,0){$B_{1}$}}
\put(130,-40){\vector(1,0){20}}\put(160,-40){\makebox(0,0){$B_{0}$}}
\put(170,-40){\vector(1,0){20}}\put(200,-40){\makebox(0,0){$\mathbbm{Z}$}}
\put(210,-40){\vector(1,0){20}}\put(240,-40){\makebox(0,0){$0$}}

\put(40,-10){\vector(0,-1){20}}
\put(80,-10){\vector(0,-1){20}}
\put(120,-10){\vector(0,-1){20}}
\put(158,-10){\line(0,-1){20}}\put(160,-10){\line(0,-1){20}}
\put(198,-10){\line(0,-1){20}}\put(200,-10){\line(0,-1){20}}

\put(60,5){\makebox(0,0){$d$}}
\put(100,5){\makebox(0,0){$d$}}
\put(140,5){\makebox(0,0){$d$}}

\put(60,-35){\makebox(0,0){$\partial_{3}$}}
\put(100,-35){\makebox(0,0){$\partial_{2}$}}
\put(140,-35){\makebox(0,0){$\partial_{1}$}}

\put(50,-20){\makebox(0,0){$F_{3}$}}
\put(90,-20){\makebox(0,0){$F_{2}$}}
\put(130,-20){\makebox(0,0){$F_{1}$}}
\end{picture}
\end{figure}
\end{lemma}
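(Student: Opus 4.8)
The plan is to verify the commutativity of each of the three squares in the diagram separately, i.e. to check $F_{i-1}\circ d = \partial_i \circ F_i$ on the free $\mathbbm{Z}G$-generators of $K_i$ for $i=1,2,3$. Since all maps are $\mathbbm{Z}G$-module homomorphisms, it suffices to evaluate on the distinguished generators $\Psi_r,\Psi_{r,r},\Psi_{r,s},\Psi_{r,r,r},\Psi_{r,r,s},\Psi_{r,s,s},\Psi_{r,s,t}$. The leftmost square $F_0\circ d = \partial_1\circ F_1$ (with $F_0=\mathrm{id}$ on $K_0=B_0=\mathbbm{Z}G$) is immediate: $d(\Psi_r)=T_r=g_r-1$, while $\partial_1 F_1(\Psi_r)=\partial_1[g_r]=g_r-1$ as well. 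This fixes conventions for $\partial_1$ and the orientation of the periodic resolution \eqref{eq;3}.

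Next I would treat the square $F_1\circ d = \partial_2\circ F_2$. Recall $d(\Psi_{r,s}) = (-1)^{\sum_{l<r} a_l}T_r\Psi(\ldots) + \cdots$, which for $\Psi_{r,s}$ (with $r<s$) specializes to $T_r\Psi_s - T_s\Psi_r = (g_r-1)\Psi_s - (g_s-1)\Psi_r$ (the sign in the $s$-slot is $(-1)^1=-1$ because exactly the $r$-th slot precedes it with $a_r=1$). Applying $F_1$ gives $(g_r-1)[g_s]-(g_s-1)[g_r]$. On the other side, $\partial_2 F_2(\Psi_{r,s}) = \partial_2([g_r,g_s]-[g_s,g_r])$, and the standard bar differential $\partial_2[x,y] = x[y]-[xy]+[x]$ yields exactly $(g_r[g_s]-[g_rg_s]+[g_r]) - (g_s[g_r]-[g_sg_r]+[g_s])$, which collapses to $(g_r-1)[g_s]-(g_s-1)[g_r]$ since $g_rg_s=g_sg_r$ in the abelian group $G$. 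For $\Psi_{r,r}$ one has $d(\Psi_{r,r})=N_r\Psi_r=\bigl(\sum_{l=0}^{m_r-1}g_r^l\bigr)(g_r-1)$ (the generator $\Psi_{r,r}=\Psi(0,\ldots,2,\ldots,0)$ has even entry $2$, so $d_r$ produces $N_r$), and $\partial_2 F_2(\Psi_{r,r}) = \sum_{l=0}^{m_r-1}\partial_2[g_r^l,g_r] = \sum_{l=0}^{m_r-1}\bigl(g_r^l[g_r]-[g_r^{l+1}]+[g_r^l]\bigr)$; the sum telescopes, using $g_r^{m_r}=1$ so $[g_r^{m_r}]=[1]=0$ and $[g_r^0]=[1]=0$, leaving $\bigl(\sum_{l=0}^{m_r-1}g_r^l\bigr)[g_r]$, matching $F_1 d(\Psi_{r,r})$.

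The real work is the top square $F_2\circ d = \partial_3\circ F_3$, and this is the step I expect to be the main obstacle — not because any single identity is deep, but because there are four cases ($\Psi_{r,r,r}$, $\Psi_{r,r,s}$, $\Psi_{r,s,s}$, $\Psi_{r,s,t}$) and each requires expanding $\partial_3[x,y,z]=x[y,z]-[xy,z]+[x,yz]-[x,y]$ on a sum of several bar symbols and then collapsing via commutativity of $G$ and periodicity (telescoping the $N_r$-sums exactly as above). For $\Psi_{r,s,t}$ with $r<s<t$, $d(\Psi_{r,s,t}) = T_r\Psi_{s,t} - T_s\Psi_{r,t} + T_t\Psi_{r,s}$ (signs $(-1)^0,(-1)^1,(-1)^2$), and after applying $F_2$ one must match this against $\partial_3$ of the six-term alternating sum $\sum_{\sigma}\mathrm{sgn}(\sigma)[g_{\sigma(r)},g_{\sigma(s)},g_{\sigma(t)}]$; every ``middle'' term $[g_ig_j,g_k]$ and $[g_i,g_jg_k]$ must cancel in pairs because $G$ is abelian, leaving precisely the $F_2 d$ expression. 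For $\Psi_{r,r,s}$ and $\Psi_{r,s,s}$ one combines the abelian-cancellation trick with a telescoping of the $\sum_{l=0}^{m_r-1}$ (resp. $\sum_{l=0}^{m_s-1}$), and for $\Psi_{r,r,r}$, where $d(\Psi_{r,r,r})=T_r\Psi_{r,r}$ (odd entry $3$), one checks $\partial_3\sum_{l=0}^{m_r-1}[g_r,g_r^l,g_r] = F_2(T_r\Psi_{r,r}) = (g_r-1)\sum_{l=0}^{m_r-1}[g_r^l,g_r]$ by the usual telescoping. In each case the verification is a finite, mechanical check; I would carry them out one generator at a time, in the order $\Psi_{r,r,r}$, then $\Psi_{r,s,s}$, $\Psi_{r,r,s}$, then $\Psi_{r,s,t}$, since the earlier ones fix the sign bookkeeping used in the last.
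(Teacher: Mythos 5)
Your verification is correct: the paper itself omits the proof as ``routine,'' and your square-by-square check on the free generators $\Psi_r,\Psi_{r,r},\Psi_{r,s},\Psi_{r,r,r},\Psi_{r,r,s},\Psi_{r,s,s},\Psi_{r,s,t}$, using the standard bar differential, cancellation of the middle terms via commutativity of $G$, and telescoping of the $N_r$-sums, is exactly the intended computation and the sign bookkeeping you give for $d$ matches the paper's conventions (as used in Lemmas \ref{l1.4} and \ref{l1.5}). The only cosmetic point is that the appeal to $[1]=0$ is unnecessary: in the unnormalized bar resolution the $[1]$ terms simply cancel in the telescoping, so the identities hold regardless.
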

\begin{proof} The proof is routine and so we omit it.
\end{proof}

\begin{corollary}\label{c1.7} Let $\phi\in B_{3}^{\ast}$ be a $3$-cocycle. Then $\phi$ is a
$3$-coboundary if and only if $F_{3}^{\ast}(\phi)$ is a $3$-coboundary.\end{corollary}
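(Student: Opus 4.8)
The plan is to leverage the commutative diagram of Lemma~\ref{l1.6}, which provides a chain map $F_\bullet\colon(K_\bullet,d)\to(B_\bullet,\partial)$ lifting the identity on $\mathbbm{Z}$. Since both $(K_\bullet,d)$ and $(B_\bullet,\partial)$ are free (hence projective) resolutions of the trivial $\mathbbm{Z}G$-module $\mathbbm{Z}$, the comparison theorem of homological algebra guarantees that $F_\bullet$ is a chain homotopy equivalence; in particular there is also a chain map $G_\bullet\colon(B_\bullet,\partial)\to(K_\bullet,d)$ in the other direction, and $F_\bullet G_\bullet$, $G_\bullet F_\bullet$ are each chain homotopic to the respective identities. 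Applying the contravariant functor $\Hom_{\mathbbm{Z}G}(-,\k^{\ast})$ turns $F_\bullet$ into a cochain map $F^{\ast}_\bullet\colon(B^{\ast}_\bullet,\partial^{\ast})\to(K^{\ast}_\bullet,d^{\ast})$ which is again a homotopy equivalence, so the induced maps on cohomology
\[
F^{\ast}\colon H^{n}(B^{\ast}_\bullet,\partial^{\ast})\To H^{n}(K^{\ast}_\bullet,d^{\ast})
\]
are isomorphisms for every $n$, in particular for $n=3$.

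From here the corollary is immediate. If $\phi\in B^{\ast}_3$ is a $3$-cocycle that is a $3$-coboundary, then its class $[\phi]$ in $H^3(B^{\ast}_\bullet,\partial^{\ast})$ is zero; since $F^{\ast}$ is a well-defined map on cohomology, $[F^{\ast}_3(\phi)]=F^{\ast}[\phi]=0$, so $F^{\ast}_3(\phi)$ is a $3$-coboundary (note it is automatically a cocycle because $F^{\ast}_\bullet$ is a cochain map). Conversely, if $F^{\ast}_3(\phi)$ is a $3$-coboundary, then $F^{\ast}[\phi]=0$ in $H^3(K^{\ast}_\bullet,d^{\ast})$, and since $F^{\ast}$ is injective (being an isomorphism) we get $[\phi]=0$, i.e.\ $\phi$ is a $3$-coboundary. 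This proves both directions.

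The only point requiring a little care — and the place I would be most careful in writing the details — is the passage from ``$F_\bullet$ is a chain map between projective resolutions of the same module'' to ``$F^{\ast}_\bullet$ induces isomorphisms on cohomology.'' One clean way is to invoke the uniqueness-up-to-homotopy part of the comparison theorem directly: any two lifts of $\mathrm{id}_{\mathbbm{Z}}$ are chain homotopic, so composing $F_\bullet$ with a lift $G_\bullet$ in the opposite direction yields chain maps homotopic to the identities, and $\Hom_{\mathbbm{Z}G}(-,\k^{\ast})$ preserves chain homotopies; hence $F^{\ast}_\bullet$ and $G^{\ast}_\bullet$ are mutually inverse on cohomology. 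Alternatively one can simply say that $H^n(B^{\ast}_\bullet)$ and $H^n(K^{\ast}_\bullet)$ both compute $\operatorname{Ext}^n_{\mathbbm{Z}G}(\mathbbm{Z},\k^{\ast})$ and that $F^{\ast}_\bullet$, being a lift of the identity, induces the canonical identification between these two incarnations of $\operatorname{Ext}$. Either way, no explicit computation is needed; the corollary is a formal consequence of Lemma~\ref{l1.6} together with standard homological algebra, and the real content has already been discharged in Lemmas~\ref{l1.3}--\ref{l1.6}.
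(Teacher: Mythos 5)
Your proposal is correct and follows essentially the same route as the paper: the paper's proof simply cites the fact that $F_{3}^{\ast}$ induces an isomorphism on $3$-cohomology, which is exactly what you establish via the comparison theorem applied to the two free resolutions from Lemmas \ref{l1.3} and \ref{l1.6}. Your write-up merely makes explicit the standard homological-algebra step that the paper leaves implicit.
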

\begin{proof} Follows from the fact that $F_{3}^{\ast}$ induces an isomorphism between $3$-cohomology groups.
\end{proof}

\section{The Majid algebra $M_{q}(\mathfrak{g})$}
In this section, we characterize the structure of the Majid algebra $M_{q}(\mathfrak{g}):=A_{q}(\mathfrak{g})^{\ast}$, the dual of $A_{q}(\mathfrak{g})$. It is clear that $M_{q}(\mathfrak{g})$ is a coradically graded pointed Majid algebra
such that the reassociator $\Phi$ is concentrated on
$M_{q}(\mathfrak{g})_{0}$, that is, $\Phi(x,y,z)=0$ unless the homogeneous elements $x,y,z$ all lie in $M_{q}(\mathfrak{g})_{0}$.

Recall that we used $e_{i},h_{i}\;(1\leq i\leq m)$ to denote the generators of $A_{q}(\mathfrak{g})$. It is not hard to see that the elements in
$\{\mathbf{1}_{a}e_{i}^{n_{i}}|a\in (\mathbbm{Z}_{n})^{m},0\leq n_{i}<l_{i},1\leq i\leq m\}$ are linear independent (in fact, $A_{q}(\mathfrak{g})$ is a subalgebra of $\mathbf{u}_{q}(\mathfrak{b})$, where $\mathfrak{b}$ is the Borel subalgebra of $\mathfrak{g}$) and can be extended to a basis
$\{\mathbf{x}_{j}\}_{j}$ consisting of homogeneous elements. The dual basis is denoted by $\{(\mathbf{x}_{j})^{\ast}\}_{j}$.

We first fix some notations. Let $\chi_{i}$ be the character of the group generated by elements $h_{1},\ldots,h_{m}$, defined as follows:
$$\chi_i(h_j):=\q^{\delta_{ij}}.$$ Therefore, $\chi_i=(\mathbf{1}_{\epsilon_{i}})^{\ast}$.
For $a=(a_{1},\ldots,a_{m})\in (\mathbbm{Z}_{n})^{m}$, define $\chi_a:=\prod_{i=1}^{m}\chi_{i}^{a_{i}}$. For $1\leq i\leq m$, and let
$$\Gamma^{i}=(\mathbf{1}_{\epsilon_{i}}e_{i})^{\ast}.$$

\begin{lemma}\label{l3.1} In $M_{q}(\mathfrak{g})$, we have:
\begin{equation}\label{eq;3.1}  \Delta(\Gamma^{j})=\chi_{j}\otimes \Gamma^{j}+\Gamma^{j}\otimes 1,\;\;\;\;(\chi_{i}\Gamma^{j})\chi_{i}^{-1}=\q^{c_{ji}}
q^{-c_{ji}}\Gamma^{j}.
\end{equation}
\end{lemma}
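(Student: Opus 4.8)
The plan is to compute the coproduct and the commutation relation of $\Gamma^j = (\mathbf{1}_{\epsilon_j}e_j)^\ast$ directly by dualizing the explicit formulas for $A_q(\mathfrak{g})$ obtained in Lemma~\ref{l2}. Since $M_q(\mathfrak{g}) = A_q(\mathfrak{g})^\ast$, the multiplication in $M_q(\mathfrak{g})$ is dual to $\Delta_J$ on $A_q(\mathfrak{g})$, and the comultiplication in $M_q(\mathfrak{g})$ is dual to the multiplication of $A_q(\mathfrak{g})$. Concretely, for $\psi \in M_q(\mathfrak{g})$ one has $\Delta(\psi)(a \otimes b) = \psi(ab)$, while $(\psi_1\psi_2)(a) = (\psi_1\otimes\psi_2)(\Delta_J(a))$. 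So the whole argument is a matter of evaluating both sides on a basis of $A_q(\mathfrak{g})$, respectively on a basis of $A_q(\mathfrak{g})\otimes A_q(\mathfrak{g})$, built from the monomials $\mathbf{1}_a e_i^{n_i}$ fixed before the lemma.

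For the coproduct formula, I would evaluate $\Delta(\Gamma^j)$ on pairs of basis elements $\mathbf{x}\otimes\mathbf{y}$ and show it is nonzero only on $(\chi_j$-part$)\otimes(\Gamma^j$-part$)$ and $(\Gamma^j$-part$)\otimes 1$. The key input is that $\Gamma^j$ is dual to $\mathbf{1}_{\epsilon_j}e_j$, a degree-one element, so $\Delta(\Gamma^j)(\mathbf{x}\otimes\mathbf{y}) = \Gamma^j(\mathbf{x}\mathbf{y})$ is nonzero only when $\mathbf{x}\mathbf{y}$ has a nonzero $\mathbf{1}_{\epsilon_j}e_j$-component; by the grading this forces one of $\mathbf{x},\mathbf{y}$ to be a group-like $\mathbf{1}_a$ and the other to be of the form $\mathbf{1}_a e_j$. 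Tracking the scalar that appears when multiplying $\mathbf{1}_a\cdot(\mathbf{1}_b e_j)$ and using $\mathbf{1}_a e_i = e_i\mathbf{1}_{a-\epsilon_i}$ from \eqref{eq;1}, one identifies the two surviving terms as $\chi_j\otimes\Gamma^j$ and $\Gamma^j\otimes 1$; here $\chi_j = (\mathbf{1}_{\epsilon_j})^\ast$ is exactly the grouplike of $M_q(\mathfrak{g})$ recording the ``$h_j$-weight,'' consistent with $\Delta_J(h_j)=h_j\otimes h_j$.

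For the commutation relation, I would compute the products $\chi_i\Gamma^j$ and $\Gamma^j\chi_i$ in $M_q(\mathfrak{g})$ by pairing against $\Delta_J$. The nontrivial ingredient is the explicit formula $\Delta_J(e_j) = e_j\otimes\flat_j^{-1} + 1\otimes\sum_{k=1}^{n-1}\mathbf{1}^j_k e_j + H_j^{-1}\otimes\mathbf{1}^j_0 e_j$ from Lemma~\ref{l2}, together with $\Delta_J(h_i)=h_i\otimes h_i$; combining these gives $\Delta_J(\mathbf{1}_{\epsilon_i+\epsilon_j}e_j)$, and extracting the $\mathbf{1}_{\epsilon_i}\otimes \mathbf{1}_{\epsilon_j}e_j$ coefficient on each side yields the scalar. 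The factor $\flat_j^{-1}$ contributes $q^{-c_{ji}}$ via $\flat_j = \sum_a \prod_l q^{-c_{jl}a_l}\mathbf{1}_a$ evaluated at weight $\epsilon_i$, and the grouplike $h$-action contributes the $\q^{c_{ji}}$; so $\chi_i\Gamma^j\chi_i^{-1} = \q^{c_{ji}}q^{-c_{ji}}\Gamma^j$, matching the claim. (Note $\q = q^n$, so $\q^{c_{ji}}q^{-c_{ji}} = q^{c_{ji}(n-1)}$, a genuine root of unity, which is precisely the ``quasi'' deformation of the usual Hopf relation $\chi_i\Gamma^j\chi_i^{-1}=q^{c_{ji}}\Gamma^j$ one would see in $\mathbf{u}_q(\mathfrak{g})$.)

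The main obstacle is purely bookkeeping: one must be careful about which side the idempotents $\mathbf{1}_a$ sit on (they move past $e_i$ at the cost of a shift $\mathbf{1}_a e_i = e_i\mathbf{1}_{a-\epsilon_i}$), about the distinction between $\mathbf{1}^i_k$ (an idempotent for the single generator $h_i$) and $\mathbf{1}_a$ (a full primitive idempotent), and about keeping the $q$ versus $\q=q^n$ exponents straight when passing between $(\mathbb{Z}_{n^2})^m$ and $(\mathbb{Z}_n)^m$. Once the relevant coefficients of $\Delta_J(\mathbf{1}_{\epsilon_i+\epsilon_j}e_j)$ are pinned down, the two identities in \eqref{eq;3.1} drop out immediately; no conceptual difficulty remains beyond this careful extraction of scalars.
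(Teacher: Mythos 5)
Your proposal is correct and follows essentially the same route as the paper: the paper likewise gets the first identity by dualizing the multiplication of $A_{q}(\mathfrak{g})$ (using $\mathbf{1}_{\epsilon_{j}}e_{j}=e_{j}\mathbf{1}_{0}$ and $(\mathbf{1}_{\epsilon_{j}})^{2}e_{j}=\mathbf{1}_{\epsilon_{j}}e_{j}$), and the second by pairing against the formula for $\Delta_{J}(e_{j})$ from Lemma \ref{l2}, which gives $\chi_{i}\Gamma^{j}=(\mathbf{1}_{\epsilon_{i}+\epsilon_{j}}e_{j})^{\ast}$ and then the scalar $q^{c_{ji}(n-1)}=\q^{c_{ji}}q^{-c_{ji}}$. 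One cosmetic remark: in the actual computation this entire scalar comes from evaluating the $\flat_{j}^{-1}$ leg of $\Delta_{J}(e_{j})$ against $\chi_{i}^{-1}=\chi_{i}^{n-1}$ (weight $(n-1)\epsilon_{i}$), rather than splitting as $q^{-c_{ji}}$ from $\flat_{j}$ plus $\q^{c_{ji}}$ from the group-like action, but the net factor is the same.
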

\begin{proof} Note that $(\mathbf{1}_{\epsilon_{j}})^{\ast}=\chi_j$, $\mathbf{1}_{\epsilon_{j}}e_{j}=
e_{j}\mathbf{1}_{{0}}=\mathbf{1}_{\epsilon_{j}}e_{j}\mathbf{1}_{{0}}$ and $(\mathbf{1}_{\epsilon_{j}})^{2}e_{j}=
\mathbf{1}_{\epsilon_{j}}e_{j}$, so we have the first identity.  By the expression of the comultiplication  of $e_{j}$ given in Lemma \ref{l2}, we obtain
$\chi_{i}\Gamma^{j}=(\mathbf{1}_{\epsilon_{i}+\epsilon_{j}}e_{j})^{\ast}$ and $(\mathbf{1}_{\epsilon_{i}+\epsilon_{j}}e_{j})^{\ast}\chi_{i}^{-1}=
q^{c_{ji}(n-1)}(\mathbf{1}_{\epsilon_{j}}e_{j})^{\ast}=\q^{c_{ji}}
q^{-c_{ji}}\Gamma^{j}$.
\end{proof}

Next, we want to obtain the Gabriel quiver of $M_{q}(\mathfrak{g})$.
We denote this quiver by $Q(M)$. It is not hard to determine the set of
vertices of $Q(M)$. Observe that the coradical of $M_{q}(\mathfrak{g})$ equals $(\k G, \Phi)$ where
   $G=\langle \chi_{i}|1\leq i\leq m\rangle \cong (\mathbbm{Z}_{n})^{m}$ and we have:
   $$\Phi(\chi_{a},\chi_b,\chi_c)=\prod_{s,t=1}^{m}\q^{-c_{st}a_{s}[\frac{b_{t}+c_{t}}{n}]}$$ for
   $a,b,c\in (\mathbbm{Z}_{n})^{m}$. Therefore, $Q(M)_0=G$.
   For $1\leq i\leq m$ and $a=(a_{1},\ldots,a_{m})\in (\mathbbm{Z}_{n})^{m}$,
define:
$$\Gamma^{i}_{\chi_{a}}:=\chi_{a}\cdot \Gamma^{i}$$
where $`\cdot$' is the multiplication of $M_{q}(\mathfrak{g})$. By Lemma \ref{l3.1}, $\Gamma^{i}_{\chi_{a}}$ is a non-trivial $(\chi_{a},\chi_{a+\epsilon_{i}})$-
skew primitive element. Clearly $\#\{\Gamma^{i}_{\chi_{a}}|1\leq i\leq m,a\in (\mathbbm{Z}_{n})^{m}\}=mn^{m}$, which equals $\dim J/J^{2}$
where $J$ is the Jacobson radical of $A_{q}(\mathfrak{g})$. The dual relation between the coradical of $M_{q}(\mathfrak{g})$ and the
radical of $A_{q}(\mathfrak{g})$ guarantees that the
set $\{\Gamma^{i}_{\chi_{a}}|1\leq i\leq m,a\in (\mathbbm{Z}_{n})^{m}\}$ forms a basis of $M_{q}(\mathfrak{g})_{1}$, leading to the following description
of $Q(M)_{1}$:
there is an arrow from $\chi_{a}$ to $\chi_{b}$ if and only if $b=a+\epsilon_{i}$ for some $1\leq i\leq m$. In this case, the only
arrow is $\Gamma^{i}_{\chi_{a}}$. Therefore, locally the quiver $Q(M)$ looks like:
\begin{figure}[hbt]
\begin{picture}(100,60)(-30,-30)
\put(0,0){\makebox(0,0){$\bullet \chi_{a}$}} \put(50,28){\makebox(0,0){$\bullet \chi_{a+\epsilon_{1}}$}}
\put(50,-30){\makebox(0,0){$\bullet \chi_{a+\epsilon_{m}}$}}
 \put(-60,28){\makebox(0,0){$ \chi_{a-\epsilon_{1}}\bullet$}}
\put(-60,-30){\makebox(0,0){$\chi_{a-\epsilon_{m}}\bullet$}}
\put(5,5){\vector(4,3){30}}\put(5,-5){\vector(4,-3){28}}
  \put(25,15){\makebox(0,0){$\cdot$}}  \put(30,5){\makebox(0,0){$\cdot$}}
    \put(30,-5){\makebox(0,0){$\cdot$}}  \put(25,-15){\makebox(0,0){$\cdot$}}
    \put(-40,26){\vector(4,-3){30}}
    \put(-40,-26){\vector(4,3){30}}
  \put(-30,15){\makebox(0,0){$\cdot$}}
   \put(-35,5){\makebox(0,0){$\cdot$}}
     \put(-35,-5){\makebox(0,0){$\cdot$}}
       \put(-30,-15){\makebox(0,0){$\cdot$}}
\end{picture}
\end{figure}

 As stated in Subsection 2.2, there is a $\k Q(M)_0$-Majid bimodule structure on $\k Q(M)_1$ (from
 the $M_{q}(\mathfrak{g})_{0}$-Majid bimodule structure on $M_{q}(\mathfrak{g})_{1}$), which can be described in the following way. We
 will use the equations \eqref{eq;1.8}-\eqref{eq;1.10} freely.

 \begin{lemma}\label{l3.2} The $\k Q(M)_0$-Majid bimodule structure on $\k Q(M)_1$ is given by:
 \begin{eqnarray}
&&\dl:\;\k Q(M)_1\to \k Q(M)_0\otimes \k Q(M)_1,\;\;\Gamma^{j}_{\chi_{a}}\mapsto \chi_{a+\epsilon_{j}} \otimes \Gamma^{j}_{\chi_{a}},\\
&&\dr:\;\k Q(M)_1\to \k Q(M)_1\otimes \k Q(M)_0,\;\;\Gamma^{j}_{\chi_{a}}\mapsto \Gamma^{j}_{\chi_{a}}\otimes \chi_{a},\\
&&\mu_{L}:\; \k Q(M)_0\otimes  \k Q(M)_1\to  \k Q(M)_1,\;\;\chi_{a}\otimes \Gamma^{j}_{\chi_{b}}\mapsto
\prod_{i=1}^{m}\q^{-c_{ij}a_{i}[\frac{b_{j}+1}{n}]}\Gamma^{j}_{\chi_{a+b}},\label{eq;3.4}\\
&&\mu_{R}:\; \k Q(M)_1\otimes  \k Q(M)_0\to  \k Q(M)_1,\;\;\Gamma^{j}_{\chi_{b}}\otimes \chi_{a}\mapsto
\prod_{i=1}^{m}q^{c_{ji}a_{i}}\Gamma^{j}_{\chi_{a+b}},
\end{eqnarray}
for $1\leq j\leq m,\;a,b\in (\mathbbm{Z}_{n})^{m}$, where $a+b$ is understood as the addition in $(\mathbbm{Z}_{n})^{m}$.
 \end{lemma}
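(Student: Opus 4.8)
The plan is to compute the four structure maps directly from the definitions, exploiting the duality $M_q(\mathfrak{g})=A_q(\mathfrak{g})^\ast$ and the explicit formulas for $\Delta_J$ and $\phi=d(J)$ obtained in Lemmas \ref{l1} and \ref{l2}. The comodule maps $\dl,\dr$ are the easy part: by Lemma \ref{l3.1} the element $\Gamma^j$ is $(\chi_j,1)$-skew primitive, and multiplying on the left by the grouplike $\chi_a$ (which in $M_q(\mathfrak{g})$ is just $(\mathbf{1}_{\epsilon_i})^\ast$-products) shifts the weights additively, so $\Gamma^j_{\chi_a}$ is $(\chi_{a+\epsilon_j},\chi_a)$-skew primitive; dualizing the statement ``$\Gamma^j_{\chi_a}$ lies in ${}^{\chi_{a+\epsilon_j}}(\k Q(M)_1)^{\chi_a}$'' gives the stated $\dl$ and $\dr$ immediately. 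So the content is in $\mu_L$ and $\mu_R$.

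For $\mu_R$, I would use the realization of $\k Q(M)_1$-module structure as the restriction of multiplication in $M_q(\mathfrak{g})$, which is dual to the coproduct of $A_q(\mathfrak{g})$. Concretely, $\Gamma^j_{\chi_b}\cdot\chi_a$ is the convolution product of the functionals $(\mathbf{1}_{\text{(weight }b)}e_j)^\ast$ and $(\mathbf{1}_{\epsilon_a}\cdots)^\ast$, evaluated against $\Delta_J$; the term $e_i\otimes\flat_i^{-1}$ in $\Delta_J(e_i)$ contributes the factor $\flat_i^{-1}$ evaluated on $\chi_a$, which by \eqref{eq;1} is $\prod_i q^{c_{ji}a_i}$. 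Since right multiplication by a grouplike only sees the ``$e_j\otimes(\text{grouplike})$'' summand of $\Delta_J(e_j)$, this is a short computation and should yield exactly \eqref{eq;3.4}'s companion formula. For $\mu_L$, the relevant summand of $\Delta_J(e_j)$ is the one carrying $e_j$ on the \emph{right} tensor factor: $1\otimes\sum_{k\neq0}\mathbf{1}^j_k e_j+H_j^{-1}\otimes\mathbf{1}^j_0 e_j$. The scalar thus depends on whether the target weight $b_j+1\equiv0\pmod n$, and picking up the eigenvalue of $H_j^{-1}=\prod_i h_i^{-c_{ij}}$ on $\chi_a$ together with the bookkeeping gives the factor $\prod_i\q^{-c_{ij}a_i[\frac{b_j+1}{n}]}$; the floor function $[\frac{b_j+1}{n}]$ is precisely the indicator that $b_j=n-1$, which is how the two cases in $\Delta_J(e_j)$ get unified.

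Alternatively — and this is probably cleaner to write — I would invoke equations \eqref{eq;1.8}--\eqref{eq;1.10}, which express the Majid bimodule action over $(\k G,\Phi)$ purely in terms of ratios of the $3$-cocycle $\Phi$. Since the underlying ungraded action of $G$ on $\k Q(M)_1$ is forced (it just permutes the arrows $\Gamma^j_{\chi_a}$ according to the quiver, up to scalars determined by the known action on $M_q(\mathfrak{g})_1\cong(J/J^2)^\ast$), the only thing to pin down is the scalar cocycle, and for that one substitutes $\Phi(\chi_a,\chi_b,\chi_c)=\prod_{s,t}\q^{-c_{st}a_s[\frac{b_t+c_t}{n}]}$ into \eqref{eq;1.8} and \eqref{eq;1.9} with the appropriate source and target grouplikes. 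The main obstacle is purely organizational: keeping the two conventions straight — the ``$H_i=\prod_j h_j^{c_{ji}}=\prod_j h_j^{c_{ij}}$'' symmetry from \eqref{eq;1} must be used to match $c_{ij}$ versus $c_{ji}$ in the $\mu_L$ versus $\mu_R$ formulas — and correctly translating the nonsymmetric appearance of the floor function between $\Delta_J(e_i)$ and $d(J)$. Once the dictionary is fixed, each of the four identities is a one- or two-line verification, so I would present the comodule maps in a sentence, then do $\mu_R$, then $\mu_L$, flagging the floor-function/case-split as the only subtle point.
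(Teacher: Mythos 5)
Your primary route --- computing $\mu_L$ and $\mu_R$ directly as convolution products in $M_q(\mathfrak{g})=A_q(\mathfrak{g})^{\ast}$ against the explicit coproduct $\Delta_J(e_j)$ from Lemma \ref{l2} --- is sound, and for the right action it is genuinely different from the paper's argument. (The comodule part is handled the same way in both: skew-primitivity via Lemma \ref{l3.1}.) The paper gets $\mu_L$ essentially for free from the normalization $\Gamma^{j}_{\chi_a}:=\chi_a\cdot\Gamma^{j}$ together with the compatibility \eqref{eq;1.8}, and then obtains $\mu_R$ indirectly: it first proves the conjugation formula $(\chi_i\cdot\Gamma^{j}_{\chi_b})\cdot\chi_i^{-1}=\q^{-c_{ij}[\frac{b_j+1}{n}]}\q^{c_{ji}}q^{-c_{ji}}\Gamma^{j}_{\chi_b}$ from \eqref{eq;1.8}--\eqref{eq;1.10} and the relation $(\chi_i\Gamma^{j})\chi_i^{-1}=\q^{c_{ji}}q^{-c_{ji}}\Gamma^{j}$ of Lemma \ref{l3.1}; then, using that the space of nontrivial skew primitives between the relevant group-likes is one-dimensional, it writes $\Gamma^{j}_{\chi_b}\cdot\chi_i=c\,\Gamma^{j}_{\chi_{b+\epsilon_i}}$, pins down $c=q^{c_{ji}}$ from $\Gamma^{j}_{\chi_b}\cdot\chi_i^{n}=\Gamma^{j}_{\chi_b}$, and finally extends to arbitrary $\chi_a$ by induction. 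Your convolution computation reaches the same coefficients more directly (on the right only the summand $e_j\otimes\flat_j^{-1}$ survives pairing with a character, on the left only the summands carrying $e_j$ in the second slot survive), but at the price of one step you leave implicit: you must identify the element $\Gamma^{j}_{\chi_b}=\chi_b\cdot\Gamma^{j}$ with the dual basis functional $(\mathbf{1}_{b+\epsilon_j}e_j)^{\ast}$ \emph{with coefficient exactly $1$}; otherwise your $\mu_R$ formula picks up a ratio of normalization constants. The identification does hold --- it is the anchored case of your own $\mu_L$ computation, where the floor factor vanishes because $[\frac{0+1}{n}]=0$, consistent with the computation in the proof of Lemma \ref{l3.1} --- but it has to be stated, since this is exactly where hidden scalars could enter.

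By contrast, the route you call ``probably cleaner'' is not self-contained as described: the compatibilities \eqref{eq;1.8}--\eqref{eq;1.10} are quasi-associativity constraints, so substituting the explicit $\Phi$ into them constrains but does not determine the actions, and the phrase ``the underlying action of $G$ is forced up to scalars determined by the known action'' begs the question, because those scalars are precisely what the lemma asserts. Some genuine input from the duality is indispensable --- either your convolution computation, or, as in the paper, the single commutation relation of Lemma \ref{l3.1} combined with one-dimensionality of the skew-primitive spaces and the relation $\chi_i^{n}=1$. So keep the first route as the actual proof (adding the normalization identification), and either drop the second or expand it into the paper's Claim-1-to-Claim-4 scheme.
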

\begin{proof} The bicomodule structure is clear since it is obtained from the comultiplication of $M_{q}(\mathfrak{g})$ and
Lemma \ref{l3.1}. Due to our choice, the proof of the left module structure is not complicated:
\begin{eqnarray*}
\chi_{a}\cdot \Gamma^{j}_{\chi_{b}}&=& \chi_{a}\cdot(\chi_{b}\cdot \Gamma^{j})\\
&=&\frac{\Phi(\chi_{a},\chi_{b},\chi_j)}{\Phi(\chi_{a},\chi_b,1)}(\chi_a\chi_b)\cdot \Gamma^{j}\\
&=&\prod_{i=1}^{m}\q^{-c_{ij}a_{i}[\frac{b_{j}+1}{n}]}\Gamma^{j}_{\chi_{a+b}}
\end{eqnarray*}
where the second equality comes from the definition of Majid bimodule (see Equation \eqref{eq;1.8}). In the last equality, we made use of our choice, that is,
$\chi_{a+b}\cdot\Gamma^{j}=\Gamma^{j}_{\chi_{a+b}}$. We divide the proof of the right module structure into four claims.\\[1.5mm]
\textbf{Claim 1:} $(\chi_{i}\cdot \Gamma^{j}_{\chi_{b}})\cdot\chi_{i}^{-1}=\q^{-c_{ij}[\frac{b_{j}+1}{n}]}\q^{c_{ji}}q^{-c_{ji}}\Gamma^{j}_{\chi_{b}}$.

\emph{Proof of Claim 1.} We have \begin{eqnarray*}
(\chi_{i}\cdot \Gamma^{j}_{\chi_{b}})\cdot\chi_{i}^{-1}&=&\frac{\Phi(\chi_i,\chi_{b},\chi_j)}{\Phi(\chi_{i},\chi_{b},1)}((\chi_{b}\chi_{i})\cdot \Gamma^{j})\cdot\chi_{i}^{-1}\\
&=&\frac{\Phi(\chi_i,\chi_{b},\chi_j)}{\Phi(\chi_{i},\chi_{b},1)}\frac{\Phi(\chi_b,\chi_{i},1)}{\Phi(\chi_{b},\chi_{i},\chi_{j})}
(\chi_{b}\cdot(\chi_{i}\cdot \Gamma^{j}))\cdot\chi_{i}^{-1}\\
&=&\frac{\Phi(\chi_i,\chi_{b},\chi_j)}{\Phi(\chi_{b},\chi_{i},\chi_{j})}
\frac{\Phi(\chi_b,\chi_{i},\chi_{i}^{-1})}{\Phi(\chi_{b},\chi_{i}\chi_{j},\chi_{i}^{-1})}\chi_{b}\cdot((\chi_{i}\cdot \Gamma^{j})\cdot\chi_{i}^{-1})\\
&=&\q^{-c_{ij}[\frac{b_{j}+1}{n}]}\q^{c_{ji}}q^{-c_{ji}}\chi_{b}\cdot \Gamma^{j}\\
&=&\q^{-c_{ij}[\frac{b_{j}+1}{n}]}\q^{c_{ji}}q^{-c_{ji}}\Gamma^{j}_{\chi_{b}},
\end{eqnarray*}
where the fourth equality follows from $\frac{\Phi(\chi_i,\chi_{b},\chi_j)}{\Phi(\chi_{b},\chi_{i},\chi_{j})}
\frac{\Phi(\chi_b,\chi_{i},\chi_{i}^{-1})}{\Phi(\chi_{b},\chi_{i}\chi_{j},\chi_{i}^{-1})}=\q^{-c_{ij}[\frac{b_{j}+1}{n}]}$ and Lemma \ref{l3.1}.\\[1.5mm]
\textbf{Claim 2:} $\Gamma^{j}_{\chi_{b}}\cdot \chi_i=q^{c_{ji}}\Gamma^{j}_{\chi_{b+\epsilon_{i}}}$.

\emph{Proof of Claim 2.} Since $\Delta$ is an algebra morphism, $\Gamma^{j}_{\chi_{b}}\cdot \chi_i$ is a
$(\chi_{b+\epsilon_{i}},\chi_{b+\epsilon_{i}+\epsilon_j})$-skew primitive element. So there is a scalar $c\in \k$ such
that $\Gamma^{j}_{\chi_{b}}\cdot \chi_i=c\Gamma^{j}_{\chi_{b+\epsilon_{i}}}$ since the space of non-trivial $(\chi_{b+\epsilon_{i}},\chi_{b+\epsilon_{i}+\epsilon_j})$-skew primitive elements in $M_{q}(\mathfrak{g})$ is $1$-dimensional.
We show that $c=q^{c_{ji}}$. In fact, we have
\begin{eqnarray*}
\Gamma^{j}_{\chi_{b}}&=&\Gamma^{j}_{\chi_{b}}\cdot \chi_{i}^{n}=\frac{\Phi(\chi_{b+\epsilon_{j}},\chi_{i},\chi_{i}^{n-1})}
{\Phi(\chi_{b},\chi_{i},\chi_{i}^{n-1})}(\Gamma^{j}_{\chi_{b}}\cdot \chi_{i})\cdot \chi_{i}^{n-1}\\
&=&\q^{-c_{ji}}c\Gamma^{j}_{\chi_{b+\epsilon_{i}}}\cdot \chi_{i}^{n-1}\\
&=&\q^{-c_{ji}}c\q^{c_{ij}[\frac{b_{j}+1}{n}]}(\chi_{i}\cdot \Gamma^{j}_{\chi_{b}})\cdot \chi_{i}^{-1}\\
&=&cq^{-c_{ji}}\Gamma^{j}_{\chi_{b}},
\end{eqnarray*}
where the third equality follows from \eqref{eq;3.4} and the last equality follows from Claim 1. Therefore, $c=q^{c_{ji}}$.\\[1.5mm]
\textbf{Claim 3:}  $\Gamma^{j}_{\chi_{b}}\cdot \chi_{i}^{l}=q^{c_{ji}l}\Gamma^{j}_{\chi_{b+l\epsilon_{i}}}$ for $0\leq l< n$.

\emph{Proof of Claim 3.} Clearly, one can assume that $l\geq 1$. Inductively, we assume that $\Gamma^{j}_{\chi_{b}}\cdot \chi_{i}^{l-1}=q^{c_{ji}(l-1)}\Gamma^{j}_{\chi_{b+(l-1)\epsilon_{i}}}$ for any $b\in (\mathbbm{Z}_{n})^{m}$. Then
\begin{eqnarray*}
\Gamma^{j}_{\chi_{b}}\cdot \chi_{i}^{l}&=&\Gamma^{j}_{\chi_{b}}\cdot (\chi_{i} \chi_{i}^{l-1})\\
&=&\frac{\Phi(\chi_{b+\epsilon_{j}},\chi_{i},\chi_{i}^{l-1})}
{\Phi(\chi_{b},\chi_{i},\chi_{i}^{l-1})} (\Gamma^{j}_{\chi_{b}}\cdot \chi_i)\cdot \chi_{i}^{l-1}\\
&=&q^{c_{ji}}\Gamma^{j}_{\chi_{b+\epsilon_{i}}}\cdot \chi_{i}^{l-1}\\
&=& q^{c_{ji}l}\Gamma^{j}_{\chi_{b+l\epsilon_{i}}},
\end{eqnarray*}
where in the last equality we used the induction hypothesis.\\[1.5mm]
\textbf{Claim 4:}  $\Gamma^{j}_{\chi_{b}}\cdot \chi_{a}=\prod_{i=1}^{m}q^{c_{ji}a_{i}}\Gamma^{j}_{\chi_{a+b}}$ for $a=(a_{1},\ldots,a_{m})$.

\emph{Proof of Claim 4.}  For $1\leq s\neq t\leq m$ and $0\leq c_{s},c_{t}\leq n-1$, we have
\begin{eqnarray*}\Gamma^{j}_{\chi_{b}}\cdot (\chi^{c_{s}}_{s}\chi_{t}^{c_{t}})&=&
\frac{\Phi(\chi_{b+\epsilon_j},\chi^{c_{s}}_{s},\chi^{c_{t}}_{t})}{\Phi(\chi_{b},\chi^{c_{s}}_{s},\chi^{c_{t}}_{t})} (\Gamma^{j}_{\chi_{b}}\cdot \chi^{c_{s}}_{s})\cdot \chi_{t}^{c_{t}}\\
&=&(\Gamma^{j}_{\chi_{b}}\cdot \chi^{c_{s}}_{s})\cdot \chi_{t}^{c_{t}}.
\end{eqnarray*}
That is, the right module structure is associative in case $s\neq t$. In general, one can repeat the above proof to show that:
$$\Gamma^{j}_{\chi_{b}}\cdot (\chi_{1}^{a_1}\chi_{2}^{a_2}\cdots \chi_{m}^{a_m})=(\ldots((\Gamma^{j}_{\chi_{b}}\cdot \chi_{1}^{a_1})
\cdot \chi_{2}^{a_2})\cdot \cdots)\cdot \chi_{m}^{a_m}).$$
Together with Claim 3, this gives the proof of Claim 4.
\end{proof}

The above lemma and the formula \eqref{eq;2} provide us a helpful tool to determine the relations of  the generators of $M_{q}{(\mathfrak{g})}$.
Since the multiplication of $M_{q}{(\mathfrak{g})}$ is not associative in general, we need to put parentheses in products.
We  define:
$$X^{\stackrel{\rightharpoonup}{l}}:=\stackrel{l}{\overbrace{(\cdots(X\cdot X)\cdot X)\cdots \cdot X)}},\;\;X^{\stackrel{\leftharpoonup}{l}}:=\stackrel{l}{\overbrace{(X\cdot\cdots(X\cdot(X\cdot X))
\cdots)}},$$ for any $X\in M_{q}{(\mathfrak{g})}$.

\begin{proposition}\label{p3.3} For $1\leq i\leq m$, let $l_{i}=\ord(q^{c_{ii}})$. Then we have
\begin{equation}\label{eq;3.3} (\Gamma^{i})^{\stackrel{\rightharpoonup}{l_{i}}}=(\Gamma^{i})^{\stackrel{\leftharpoonup}{l_{i}}}=0
\end{equation}
and $(\Gamma^{i})^{\stackrel{\rightharpoonup}{l}}\neq 0\neq (\Gamma^{i})^{\stackrel{\leftharpoonup}{l}}$ for $l< l_{i}$.
\end{proposition}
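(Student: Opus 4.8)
The plan is to compute the iterated products $(\Gamma^{i})^{\stackrel{\rightharpoonup}{l}}$ explicitly inside $\k Q(M)$ using the quiver-Majid-algebra multiplication formula \eqref{eq;2} together with the bimodule structure maps of Lemma \ref{l3.2}, and then read off exactly when the result vanishes. First I would treat the case $l=2$ to see the pattern: by \eqref{eq;2}, $\Gamma^{i}\cdot\Gamma^{i}=\sum_{d\in D_{1}^{2}}(\Gamma^{i}\cdot\Gamma^{i})_{d}$, a sum of the two $2$-thin splits, one of which is $[\Gamma^{i}\cdot\chi_{\epsilon_{i}}]\cdot[\mathbf{1}\text{-path}\,\cdot\Gamma^{i}]$-type and the other the opposite bracketing; using $\mu_{L},\mu_{R}$ from Lemma \ref{l3.2} each summand is a scalar multiple of the length-$2$ path through $\chi_{\epsilon_{i}}$, so $\Gamma^{i}\cdot\Gamma^{i}=(1+q^{c_{ii}})\,p_{2}$ where $p_{2}$ is that path (up to the precise scalar, which I would pin down from \eqref{eq;3.4} and Claim 2 of Lemma \ref{l3.2}). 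Iterating, $(\Gamma^{i})^{\stackrel{\rightharpoonup}{l}}$ is a scalar multiple of the unique length-$l$ path $\unit\to\chi_{\epsilon_{i}}\to\chi_{2\epsilon_{i}}\to\cdots$, and the scalar is a $q^{c_{ii}}$-factorial-type product $\prod_{h=1}^{l-1}(\text{something involving }q^{c_{ii}h})$; the key point is that this Gaussian-binomial-style coefficient is, up to an invertible factor, exactly $[l]^{!}_{d_{i}}$ in the notation of the paper (with $d_{i}a_{ii}=c_{ii}$), hence it is nonzero for $l<l_{i}=\ord(q^{c_{ii}})$ and vanishes at $l=l_{i}$.

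The cleanest way to organize this is by induction on $l$: assume $(\Gamma^{i})^{\stackrel{\rightharpoonup}{l-1}}=\lambda_{l-1}\,p_{l-1}$ with $\lambda_{l-1}=\prod_{h=1}^{l-2}f(h)$ a known nonzero scalar for $l-1<l_{i}$, where $p_{l-1}$ is the length-$(l-1)$ path along the $i$-direction starting at $\unit$; then compute $(\Gamma^{i})^{\stackrel{\rightharpoonup}{l}}=(\Gamma^{i})^{\stackrel{\rightharpoonup}{l-1}}\cdot\Gamma^{i}=\lambda_{l-1}\,(p_{l-1}\cdot\Gamma^{i})$ via \eqref{eq;2}. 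The product $p_{l-1}\cdot\Gamma^{i}$ is a sum over $D_{l-1}^{l}$, i.e. $l$ summands according to which position the single arrow $\Gamma^{i}$ occupies; each summand is, after applying $\mu_{L}$ to move $\Gamma^{i}$ leftward past the relevant vertices and $\mu_{R}$ to absorb the trailing vertices, a scalar times $p_{l}$. Summing the $l$ scalars gives a geometric-type sum $\sum_{j=0}^{l-1}q^{c_{ii}j}$ (up to a global invertible twist coming from the $\Phi$-factors in \eqref{eq;1.8}--\eqref{eq;1.10} and the comodule gradings, which never vanishes), and $\sum_{j=0}^{l-1}q^{c_{ii}j}=\frac{q^{c_{ii}l}-1}{q^{c_{ii}}-1}$, so $f(l-1)\neq 0\iff q^{c_{ii}l}\neq 1\iff l<l_{i}$. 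This simultaneously gives the nonvanishing for $l<l_{i}$ and the relation $(\Gamma^{i})^{\stackrel{\rightharpoonup}{l_{i}}}=0$. The statement for $X^{\stackrel{\leftharpoonup}{l}}$ is entirely symmetric: one runs the same induction multiplying on the left, using $\mu_{R}$ to push $\Gamma^{i}$ rightward, and obtains the same coefficients (the left/right module structures differ only by the harmless $\q$-versus-$q$ discrepancy recorded in Lemma \ref{l3.1}, which is an invertible scalar per step).

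The main obstacle I anticipate is bookkeeping the $\Phi$-cocycle weights correctly: each application of \eqref{eq;1.8}, \eqref{eq;1.9}, \eqref{eq;1.10} to commute a factor past a vertex introduces a ratio of $\Phi$-values, and $\Phi$ here is the nontrivial abelian $3$-cocycle $\Phi(\chi_{a},\chi_{b},\chi_{c})=\prod_{s,t}\q^{-c_{st}a_{s}[\frac{b_{t}+c_{t}}{n}]}$, so these floor-function exponents must be tracked across the sum over $D_{l-1}^{l}$. The saving grace is that one does not need the exact scalar $f(l-1)$, only whether it is zero: all the $\Phi$-contributions and the $\mu_{L},\mu_{R}$ prefactors are powers of roots of unity, hence invertible, so they can be collected into a single nonzero constant $u_{l}$, and the vanishing is governed purely by the factor $\sum_{j=0}^{l-1}q^{c_{ii}j}$. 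Thus I would state a lemma "$(\Gamma^{i})^{\stackrel{\rightharpoonup}{l}}=u_{l}\,[l]_{d_{i}}^{!}\cdot p_{l}$ with $u_{l}\in\k^{\ast}$" (and its $\leftharpoonup$ analogue), prove it by the induction above, and then the proposition is immediate since $[l]_{d_{i}}^{!}\neq 0$ for $l<l_{i}$ while $[l_{i}]_{d_{i}}^{!}=0$ because the factor for $h=l_{i}$ has numerator $q^{c_{ii}l_{i}}-q^{-c_{ii}l_{i}}=0$.
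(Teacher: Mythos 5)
Your strategy is, in substance, the computation that underlies the paper's own proof: the paper observes that formula \eqref{eq;2} confines everything to the basic-cycle subquiver $Q^i$ (vertices $\langle\chi_i\rangle$, arrows $\chi_i^k\cdot\Gamma^i$) and then simply quotes \cite[Lem.\ 3.6]{HLY}, whose content is exactly the lemma you propose to prove, namely $(\Gamma^{i})^{\stackrel{\rightharpoonup}{l}}=l!_{q^{c_{ii}}}\,p_{0_{i}}^{l}$ and $(\Gamma^{i})^{\stackrel{\leftharpoonup}{l}}=\q^{-c_{ii}l'[\frac{l}{n}]}\,l!_{q^{c_{ii}}}\,p_{0_{i}}^{l}$; the proposition then follows because $c_{ii}=2d_i$ makes this coefficient a unit multiple of $[l]^{!}_{d_i}$. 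So you are re-deriving the cited lemma rather than invoking it, which is legitimate --- but then the derivation must be complete, and the one step you wave through is precisely the delicate one.

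The gap is your claim that, since every $\mu_{L}$-, $\mu_{R}$- and $\Phi$-prefactor is a root of unity, they ``can be collected into a single nonzero constant $u_l$'', so that vanishing is governed by $\sum_{j=0}^{l-1}q^{c_{ii}j}$. Invertibility of the individual prefactors does not let you pull them out of the sum over the $l$ insertion positions: a priori different summands carry different units, and a sum of distinct units times powers of $q^{c_{ii}}$ need not be a unit multiple of $[l]_{q^{c_{ii}}}$ --- it could vanish before $l_i$ or fail to vanish at $l_i$. What has to be checked is that the prefactor is in fact independent of the insertion position $k$. For the right-bracketed product this is immediate: the slot-$k$ summand of $p_{l-1}\cdot\Gamma^i$ picks up $q^{c_{ii}}$ from each of the $l-k$ applications of $\mu_{R}$, while its single $\mu_{L}$-factor is $\q^{-c_{ii}(k-1)'[\frac{0+1}{n}]}=1$, so $p_{l-1}\cdot\Gamma^i=[l]_{q^{c_{ii}}}p_{0_i}^{l}$ on the nose. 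For the left-bracketed product the check is genuinely needed, because $l_i=n^2/\gcd(n^2,c_{ii})$ typically exceeds $n$, the path winds around the cycle, and the floor-function factors in \eqref{eq;3.4} are nontrivial: the slot-$k$ summand of $\Gamma^i\cdot p_{l-1}$ carries $q^{c_{ii}(k-1)'}$ from $\mu_{R}$ and $\q^{-c_{ii}([\frac{l-1}{n}]-[\frac{k-1}{n}])}$ from the wrap-around $\mu_{L}$-factors; writing $q^{c_{ii}(k-1)'}=q^{c_{ii}(k-1)}\q^{-c_{ii}[\frac{k-1}{n}]}$, the $k$-dependent $\q$-powers cancel and the step scalar is $\q^{-c_{ii}[\frac{l-1}{n}]}[l]_{q^{c_{ii}}}$, whose accumulation reproduces (using $\q^{n}=1$) the scalar $\q^{-c_{ii}l'[\frac{l}{n}]}$ quoted from \cite{HLY}. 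With this verification inserted your induction is correct; two small further corrections: the left/right asymmetry comes from the floor functions in $\mu_{L}$ versus their absence in $\mu_{R}$ (Lemma \ref{l3.2}), not from Lemma \ref{l3.1}, and the vanishing factor of $[l_i]^{!}_{d_i}$ has numerator $q^{d_il_i}-q^{-d_il_i}$ rather than $q^{c_{ii}l_i}-q^{-c_{ii}l_i}$ (both are zero, but only the former is the factor actually occurring).
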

\begin{proof} The proof of this result is parallel with the proof of \cite[Lem. 3.6]{HLY} and so we omit the computation. We just explain
why the proof of \cite[Lem. 3.6]{HLY} can apply to our case and what results the computation will deliver. Let $Q^{i}$ be the subquiver of $Q(M)$ defined as follows:
  the set of vertices is $Q^{i}_0=\langle \chi_i \rangle\cong \mathbbm{Z}_{n}$ and the set of arrows is $Q^{i}_1=\{\chi_{i}^{k}\cdot\Gamma^{i}|0\leq k\leq n-1\}$.
  Clearly, $Q^{i}$ is a basic cycle of length $n$, which is the case considered in \cite[Lem. 3.6]{HLY}. By the formula \eqref{eq;2}, we find that, to
  compute $(\Gamma^{i})^{\stackrel{\rightharpoonup}{l}}$ and $(\Gamma^{i})^{\stackrel{\leftharpoonup}{l}}$, we only need to consider the Majid subalgebra
  $\k Q^{i}$. Therefore, \cite[Lem. 3.6]{HLY} applies. Moreover, if we let $p_{t_{i}}^{l}$ be the path starting from $\chi_{i}^{t}$ with length
  $l$ in $ Q^{i}$,  then \cite[Lem. 3.6]{HLY} implies that
\begin{equation}\label{eq;3.3} (\Gamma^{i})^{\stackrel{\rightharpoonup}{l}}=l!_{q^{c_{ii}}}p_{0_{i}}^{l},\;\;
  (\Gamma^{i})^{\stackrel{\leftharpoonup}{l}}=
  \q^{-c_{ii}l'[\frac{l}{n}]}l!_{q^{c_{ii}}}p_{0_{i}}^{l},\end{equation}
  where  $l!_{q^{c_{ii}}}=\sum_{j=0}^{l-1}q^{jc_{ii}}$ by definition, and $l'$ is the remainder after dividing $l$ by $n$.
  As a consequence, we obtain the desired equations.
\end{proof}

In the following conclusion, there is a delicate point at notation: We will use $\Gamma^{j}_{\chi_i}\cdot \Gamma^{i}$ to
denote the multiplication in $\k Q(M)$ while $\Gamma^{j}_{\chi_i} \Gamma^{i}$  stands for the connection of arrows (that is, $\Gamma^{j}_{\chi_i} \Gamma^{i}$ is the path
$1\to \chi_i\to \chi_i\chi_j$ in $Q(M)$).

\begin{proposition}\label{p3.4} Assume $n\geq4$. Then for $1\leq i\neq j\leq m$,  we have the following Serre relation:
 $$\sum_{r+s=1-a_{ij}}(-1)^{s}\left [
\begin{array}{c} 1-a_{ij}\\s
\end{array}\right]_{d_{i}}((\Gamma^i)^{\stackrel{\rightharpoonup}{r}}\cdot \Gamma^{j})\cdot (\Gamma^{i})^{\stackrel{\leftharpoonup}{s}}=0.$$
\end{proposition}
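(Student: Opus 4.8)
The plan is to reduce the statement to a computation inside a single Hopf-quiver Majid subalgebra attached to the pair $(i,j)$, where one can import the known quantum-Serre identity for $\mathbf{u}_q$. First I would identify the relevant subquiver: let $Q^{ij}$ be the sub-Hopf-quiver of $Q(M)$ with vertex set the subgroup $\langle\chi_i,\chi_j\rangle$ and arrow set $\{\chi_a\cdot\Gamma^i,\chi_a\cdot\Gamma^j\mid a\in\langle\chi_i,\chi_j\rangle\}$. By the multiplication formula \eqref{eq;2}, every factor appearing in a product of the $\Gamma^i$'s and $\Gamma^j$ involves only arrows and vertices of $Q^{ij}$, so the whole Serre expression lives in the Majid subalgebra $\k Q^{ij}$, and it suffices to work there. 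Using Lemma \ref{l3.2} together with \eqref{eq;1.8}--\eqref{eq;1.10} one computes that the restriction of the reassociator $\Phi$ and of the bimodule action to this subalgebra has exactly the same shape as in the rank-two case treated in \cite{HLY}, with the braiding parameter governed by $q^{c_{ij}}$ (and $q^{c_{ii}}$, $q^{c_{jj}}$ on the diagonal); in particular the hypothesis $n\geq 4$ guarantees $\ord(q^{c_{ii}})=\ord(q^{d_ia_{ii}})=l_i$ is large enough that the $q$-binomial coefficients $\left[\begin{array}{c}1-a_{ij}\\s\end{array}\right]_{d_i}$ that occur (with $0\le s\le 1-a_{ij}\le 4$) are the genuine ones and no degeneration of the quantum integers spoils the identity.

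Next I would make the bracketing precise. Using Proposition \ref{p3.3}, rewrite $(\Gamma^i)^{\stackrel{\rightharpoonup}{r}}=r!_{q^{c_{ii}}}\,p^r_{0_i}$ and $(\Gamma^i)^{\stackrel{\leftharpoonup}{s}}=\q^{-c_{ii}s'[s/n]}s!_{q^{c_{ii}}}\,p^s_{0_i}$ as scalar multiples of paths in $Q^i$, so that each summand $((\Gamma^i)^{\stackrel{\rightharpoonup}{r}}\cdot\Gamma^j)\cdot(\Gamma^i)^{\stackrel{\leftharpoonup}{s}}$ becomes a scalar times a product of three paths, which by \eqref{eq;2} expands into a sum over thin splits. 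The key point is that all these resulting paths of length $2-a_{ij}$ from the vertex $1$ to $\chi_i^{1-a_{ij}}\chi_j$ are parallel, hence the Serre expression is a single scalar times one fixed path $p$; one only needs to check that scalar vanishes. Carrying the scalars through — the $q$-binomial coefficient, the factorials $r!_{q^{c_{ii}}}s!_{q^{c_{ii}}}$, and the combinatorial coefficients produced by the left/right actions of Lemma \ref{l3.2} on the intermediate arrow $\Gamma^j$ and by the reassociator factors in \eqref{eq;1.8}--\eqref{eq;1.10} — one obtains precisely (a $q$-power multiple of) the scalar $\sum_{r+s=1-a_{ij}}(-1)^s\left[\begin{array}{c}1-a_{ij}\\s\end{array}\right]_{d_i}\,q^{(\text{quadratic in }r,s)}$, which is a specialization of the classical quantum Serre polynomial identity (the Gauss $q$-binomial identity underlying the relation $e_i^{l_i}=0$-type cancellations in $\mathbf{u}_q(\mathfrak g)$, cf. the defining relations recalled for $H$ in Section 2). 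That sum is zero, so the whole expression vanishes.

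The main obstacle I expect is bookkeeping of the $q$-powers coming from three separate sources — the reassociator $\Phi$ evaluated on triples in $\langle\chi_i,\chi_j\rangle$ via \eqref{eq;1.8}--\eqref{eq;1.10}, the bimodule coefficients $\prod\q^{-c_{\bullet j}a_\bullet[\cdots]}$ and $\prod q^{c_{j\bullet}a_\bullet}$ of Lemma \ref{l3.2}, and the factorial/comparison factor $\q^{-c_{ii}s'[s/n]}$ from Proposition \ref{p3.3} — and checking that when these are collected the surviving coefficient is exactly the symmetric quantum Serre sum with parameter $q^{d_i}$ rather than some twisted variant that fails to vanish. Getting the exponent of $q$ attached to each monomial $q^{r c_{ii}}$-term to match the classical identity is where the role of the symmetrized Cartan entry $c_{ij}=d_ia_{ij}$ (as opposed to $a_{ij}$) and of the constraint $n\ge 4$ really enters; once the coefficient is recognized as the classical one, vanishing is immediate, and a parallel computation (or applying the coalgebra anti-automorphism / a left-right symmetry of $\k Q^{ij}$) handles the fact that the same conclusion is bracketing-independent up to the nonzero scalars already recorded in \eqref{eq;3.3}.
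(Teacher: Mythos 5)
There is a genuine gap at the heart of your reduction: the claim that all the paths produced by expanding $((\Gamma^i)^{r}\cdot\Gamma^j)\cdot(\Gamma^i)^{s}$ are ``parallel, hence the Serre expression is a single scalar times one fixed path'' is false. In the path coalgebra $\k Q(M)$, distinct paths with the same source $1$ and the same target $\chi_i^{1-a_{ij}}\chi_j$ but passing through different intermediate vertices (e.g.\ for $a_{ij}=-1$ the three paths $1\to\chi_j\to\chi_i\chi_j\to\chi_i^2\chi_j$, $1\to\chi_i\to\chi_i\chi_j\to\chi_i^2\chi_j$, and $1\to\chi_i\to\chi_i^2\to\chi_i^2\chi_j$) are linearly independent basis elements, not scalar multiples of one another. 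Consequently each term $(\Gamma^i)^{r}\cdot\Gamma^j\cdot(\Gamma^i)^{s}$ expands, via formula \eqref{eq;2}, into a combination of $2-a_{ij}$ independent paths, and the Serre combination vanishes only because the alternating $q$-binomial sum kills the coefficient of \emph{each} of these paths separately; the coefficients attached to different paths are different $q$-polynomials (they carry different powers of $q^{c_{ij}}$), so there is no single ``classical quantum Serre scalar'' whose vanishing settles the matter. This is exactly the bookkeeping the paper's proof carries out: it computes the full expansions of $\Gamma^i\cdot(\Gamma^i\cdot\Gamma^j)$, $\Gamma^i\cdot(\Gamma^j\cdot\Gamma^i)$, $(\Gamma^j\cdot\Gamma^i)\cdot\Gamma^i$ (and their analogues for $a_{ij}=-2,-3$) in the path basis and checks the cancellation path by path. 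Your proposal, as written, would verify only one scalar identity and then conclude, which does not establish the relation.

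Two smaller points. First, the role of $n\geq 4$ is not about non-degeneration of the $q$-binomial coefficients; it is that $1-a_{ij}\leq 4\leq n$ forces $[\frac{l}{n}]$-type corrections in \eqref{eq;3.3} to vanish, so that $(\Gamma^i)^{\stackrel{\rightharpoonup}{l}}=(\Gamma^i)^{\stackrel{\leftharpoonup}{l}}$ and the relevant reassociator values $\Phi(\chi_i^r,\chi_j,\chi_i^s)$ are trivial, making the bracketing immaterial — this is how the paper justifies writing $(\Gamma^i)^r\cdot\Gamma^j\cdot(\Gamma^i)^s$ unambiguously, and your argument should state it this way. Second, your restriction to the subquiver on $\langle\chi_i,\chi_j\rangle$ is legitimate and parallels the paper's use of the basic cycle $Q^i$ in Proposition \ref{p3.3}, but by itself it does not reduce the problem to a rank-two case already ``known'': the vanishing still has to be computed, coefficientwise, in that subalgebra.
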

\begin{proof} Since $n\geq 4$, Formula \eqref{eq;3.3} implies:
$$(\Gamma^i)^{\stackrel{\rightharpoonup}{r}}=(\Gamma^i)^{\stackrel{\leftharpoonup}{r}},\;\;\;(0\leq r\leq 1-a_{ij}).$$
So there is no harm in writing $(\Gamma^i)^{r}$ for both $(\Gamma^i)^{\stackrel{\rightharpoonup}{r}}$ and $(\Gamma^i)^{\stackrel{\leftharpoonup}{r}}$.
Moreover, by the definition of a Majid algebra, we have
$$((\Gamma^i)^{r}\cdot \Gamma^{j})\cdot (\Gamma^i)^{s}=\frac{\Phi(1,1,1)}{\Phi(\chi_{i}^r,\chi_j,\chi_{i}^s)}(\Gamma^i)^{r}\cdot (\Gamma^{j}\cdot (\Gamma^i)^{s})=(\Gamma^i)^{r}\cdot (\Gamma^{j}\cdot (\Gamma^i)^{s}).$$
Therefore, the above Serre relation can be written in a more familiar form:
 $$\sum_{r+s=1-a_{ij}}(-1)^{s}\left [
\begin{array}{c} 1-a_{ij}\\s
\end{array}\right]_{d_{i}}(\Gamma^i)^{r}\cdot \Gamma^{j}\cdot (\Gamma^{i})^{s}=0.$$

We will provide a detailed proof of the Serre  relation only for  Lie algebra $\mathfrak{g}$ of type ADE, where we find out how the product
formula \eqref{eq;2} can be used. For the other types of Lie algebras, we just state the computating results.
By \eqref{eq;2}, we have:
\begin{eqnarray*}
\Gamma^{i}\cdot \Gamma^{j}&=&[\Gamma^{i}\cdot \chi_j][1\cdot \Gamma^{j}]+[\chi_i \cdot \Gamma^{j}][\Gamma^{i}\cdot 1]\\
&=&q^{c_{ij}}\Gamma^{i}_{\chi_{j}}\Gamma^{j}+\Gamma^{j}_{\chi_{i}}\Gamma^{i}
\end{eqnarray*}
and
\begin{eqnarray*}
\Gamma^{j}\cdot \Gamma^{i}&=&[\Gamma^{j}\cdot \chi_i][1\cdot \Gamma^{i}]+[\chi_j \cdot \Gamma^{i}][\Gamma^{j}\cdot 1]\\
&=&q^{c_{ji}}\Gamma^{j}_{\chi_{i}}\Gamma^{i}+\Gamma^{i}_{\chi_{j}}\Gamma^{j}.
\end{eqnarray*}
Here $\Gamma^{j}_{\chi_{i}}\Gamma^{i}$ is the path $1\to \chi_i\to \chi_i\chi_j$
 and $\Gamma^{i}_{\chi_{j}}\Gamma^{j}$ is the path $1\to \chi_j\to \chi_i\chi_j$. Therefore, if $a_{ij}=0$ (which implies $c_{ij}=0$) then we have $\Gamma^{i}\cdot \Gamma^{j}=\Gamma^{j}\cdot \Gamma^{i}$. Now
consider the case $a_{ij}=-1$. We compute:
\begin{eqnarray*}
\Gamma^{i}\cdot \Gamma^{i}_{\chi_{j}}\Gamma^{j}&=& [\Gamma^{i}\cdot \chi_i\chi_j][1\cdot \Gamma^{i}_{\chi_j}][1\cdot \Gamma^{j}]+
[\chi_{i}\cdot \Gamma^{i}_{\chi_j}][\Gamma^{i}\cdot {\chi_j}][1\cdot \Gamma^{j}]\\
&&+[\chi_{i}\cdot \Gamma^{i}_{\chi_j}][\chi_{i}\cdot\Gamma^{j}][\Gamma^{i}\cdot 1]\\
&=& q^{c_{ii}+c_{ij}}\Gamma^{i}_{\chi_i\chi_j}\Gamma^{i}_{\chi_j}\Gamma^{j}+q^{c_{ij}}\Gamma^{i}_{\chi_i\chi_j}\Gamma^{i}_{\chi_j}\Gamma^{j}
+\Gamma^{i}_{\chi_i\chi_j}\Gamma^{j}_{\chi_i}\Gamma^{i}\\
&=&q^{c_{ij}}(1+q^{c_{ii}})\Gamma^{i}_{\chi_i\chi_j}\Gamma^{i}_{\chi_j}\Gamma^{j}+\Gamma^{i}_{\chi_i\chi_j}\Gamma^{j}_{\chi_i}\Gamma^{i};
\end{eqnarray*}
\begin{eqnarray*}
\Gamma^{i}_{\chi_{j}}\Gamma^{j}\cdot \Gamma^{i}&=& [\chi_i\chi_j\cdot \Gamma^{i}][\Gamma^{i}_{\chi_j}\cdot 1][\Gamma^{j}\cdot 1]+
[\Gamma^{i}_{\chi_j}\cdot \chi_i][{\chi_j}\cdot \Gamma^{i}][ \Gamma^{j}\cdot 1]\\
&&+[ \Gamma^{i}_{\chi_j}\cdot \chi_i][\Gamma^{j}\cdot \chi_i][1\cdot \Gamma^{i}]\\
&=& \Gamma^{i}_{\chi_i\chi_j}\Gamma^{i}_{\chi_j}\Gamma^{j}+q^{c_{ii}}\Gamma^{i}_{\chi_i\chi_j}\Gamma^{i}_{\chi_j}\Gamma^{j}
+q^{c_{ii}+c_{ji}}\Gamma^{i}_{\chi_i\chi_j}\Gamma^{j}_{\chi_i}\Gamma^{i}\\
&=&(1+q^{c_{ii}})\Gamma^{i}_{\chi_i\chi_j}\Gamma^{i}_{\chi_j}\Gamma^{j}+q^{c_{ii}+c_{ji}}\Gamma^{i}_{\chi_i\chi_j}\Gamma^{j}_{\chi_i}\Gamma^{i};
\end{eqnarray*}
\begin{eqnarray*}
\Gamma^{i}\cdot\Gamma^{j}_{\chi_{i}}\Gamma^{i} &=& [\Gamma^{i}\cdot\chi_i\chi_j][1\cdot\Gamma^{i}_{\chi_j}][1\cdot\Gamma^{i}]+
[\chi_i\cdot\Gamma^{j}_{\chi_i}][\Gamma^{i}\cdot \chi_i][1\cdot \Gamma^{i}]\\
&&+[ \chi_i\cdot\Gamma^{j}_{\chi_i}][\chi_i\cdot\Gamma^{i}][\Gamma^{i}\cdot 1]\\
&=& q^{c_{ii}+c_{ij}}\Gamma^{i}_{\chi_i\chi_j}\Gamma^{j}_{\chi_i}\Gamma^{i}+q^{c_{ii}}\Gamma^{j}_{\chi^{2}_i}\Gamma^{i}_{\chi_i}\Gamma^{i}
+\Gamma^{j}_{\chi^{2}_i}\Gamma^{i}_{\chi_i}\Gamma^{i}\\
&=&(1+q^{c_{ii}})\Gamma^{j}_{\chi^{2}_i}\Gamma^{i}_{\chi_i}\Gamma^{i}+q^{c_{ii}+c_{ij}}\Gamma^{i}_{\chi_i\chi_j}\Gamma^{j}_{\chi_i}\Gamma^{i},
\end{eqnarray*}
and:
\begin{eqnarray*}
\Gamma^{j}_{\chi_{i}}\Gamma^{i}\cdot \Gamma^{i}&=& [\chi_i\chi_j\cdot \Gamma^{i}][\Gamma^{j}_{\chi_i}\cdot 1][\Gamma^{i}\cdot 1]+
[\Gamma^{j}_{\chi_i}\cdot \chi_i][{\chi_i}\cdot \Gamma^{i}][ \Gamma^{i}\cdot 1]\\
&&+[ \Gamma^{j}_{\chi_i}\cdot \chi_i][\Gamma^{i}\cdot \chi_i][1\cdot \Gamma^{i}]\\
&=& \Gamma^{i}_{\chi_i\chi_j}\Gamma^{j}_{\chi_i}\Gamma^{i}+q^{c_{ji}}\Gamma^{j}_{\chi^{2}_i}\Gamma^{i}_{\chi_i}\Gamma^{i}
+q^{c_{ji}+c_{ii}}\Gamma^{j}_{\chi^{2}_i}\Gamma^{i}_{\chi_i}\Gamma^{i}\\
&=&q^{c_{ji}}(1+q^{c_{ii}})\Gamma^{j}_{\chi^{2}_i}\Gamma^{i}_{\chi_i}\Gamma^{i}+\Gamma^{i}_{\chi_i\chi_j}\Gamma^{j}_{\chi_i}\Gamma^{i}.
\end{eqnarray*}
Here $\Gamma^{i}_{\chi_i\chi_j}\Gamma^{i}_{\chi_j}\Gamma^{j}$ is the path $1\to \chi_j\to \chi_i\chi_j\to \chi^{2}_i\chi_j$. The other paths are similar.
Thus,
\begin{eqnarray*}
\Gamma^{i}\cdot (\Gamma^{i}\cdot \Gamma^{j})&=&\Gamma^{i}\cdot (q^{c_{ij}}\Gamma^{i}_{\chi_j}\Gamma^{j}+\Gamma^{j}_{\chi_i}\Gamma^{i})\\
&=&q^{c_{ij}}(q^{c_{ij}}(1+q^{c_{ii}})\Gamma^{i}_{\chi_i\chi_j}\Gamma^{i}_{\chi_j}\Gamma^{j}+\Gamma^{i}_{\chi_i\chi_j}\Gamma^{j}_{\chi_i}\Gamma^{i}) \\
&& +((1+q^{c_{ii}})\Gamma^{j}_{\chi^{2}_i}\Gamma^{i}_{\chi_i}\Gamma^{i}+q^{c_{ii}+c_{ij}}\Gamma^{i}_{\chi_i\chi_j}\Gamma^{j}_{\chi_i}\Gamma^{i})\\
&=& q^{2c_{ij}}(1+q^{c_{ii}})\Gamma^{i}_{\chi_i\chi_j}\Gamma^{i}_{\chi_j}\Gamma^{j}+q^{c_{ij}}(1+q^{c_{ii}})\Gamma^{i}_{\chi_i\chi_j}\Gamma^{j}_{\chi_i}\Gamma^{i}
\\
&&+(1+q^{c_{ii}})\Gamma^{j}_{\chi^{2}_i}\Gamma^{i}_{\chi_i}\Gamma^{i},
\end{eqnarray*}
\begin{eqnarray*}
\Gamma^{i}\cdot (\Gamma^{j}\cdot \Gamma^{i})&=&\Gamma^{i}\cdot (q^{c_{ji}}\Gamma^{j}_{\chi_i}\Gamma^{i}+\Gamma^{i}_{\chi_j}\Gamma^{j})\\
&=&q^{c_{ji}}((1+q^{c_{ii}})\Gamma^{j}_{\chi^{2}_i}\Gamma^{i}_{\chi_i}\Gamma^{i}+q^{c_{ii}+c_{ij}}\Gamma^{i}_{\chi_i\chi_j}\Gamma^{j}_{\chi_i}\Gamma^{i})\\
&&+ (q^{c_{ij}}(1+q^{c_{ii}})\Gamma^{i}_{\chi_i\chi_j}\Gamma^{i}_{\chi_j}\Gamma^{j}+\Gamma^{i}_{\chi_i\chi_j}\Gamma^{j}_{\chi_i}\Gamma^{i})\\
&=& q^{c_{ij}}(1+q^{c_{ii}})\Gamma^{i}_{\chi_i\chi_j}\Gamma^{i}_{\chi_j}\Gamma^{j}
+(q^{c_{ii}+c_{ij}+c_{ji}}+1)\Gamma^{i}_{\chi_i\chi_j}\Gamma^{j}_{\chi_i}\Gamma^{i}\\
&&+q^{c_{ji}}(1+q^{c_{ii}})\Gamma^{j}_{\chi^{2}_i}\Gamma^{i}_{\chi_i}\Gamma^{i},
\end{eqnarray*}
and:
\begin{eqnarray*}
(\Gamma^{j}\cdot \Gamma^{i})\cdot \Gamma^{i}&=&(q^{c_{ji}}\Gamma^{j}_{\chi_i}\Gamma^{i}+\Gamma^{i}_{\chi_j}\Gamma^{j})\cdot \Gamma^{i}\\
&=&q^{c_{ji}}(q^{c_{ji}}(1+q^{c_{ii}})\Gamma^{j}_{\chi^{2}_i}\Gamma^{i}_{\chi_i}\Gamma^{i}+\Gamma^{i}_{\chi_i\chi_j}\Gamma^{j}_{\chi_i}\Gamma^{i})\\
&&+ ((1+q^{c_{ii}})\Gamma^{i}_{\chi_i\chi_j}\Gamma^{i}_{\chi_j}\Gamma^{j}+q^{c_{ii}+c_{ji}}\Gamma^{i}_{\chi_i\chi_j}\Gamma^{j}_{\chi_i}\Gamma^{i})\\
&=& (1+q^{c_{ii}})\Gamma^{i}_{\chi_i\chi_j}\Gamma^{i}_{\chi_j}\Gamma^{j}
+q^{c_{ji}}(1+q^{c_{ii}})\Gamma^{i}_{\chi_i\chi_j}\Gamma^{j}_{\chi_i}\Gamma^{i}\\
&&+q^{2c_{ji}}(1+q^{c_{ii}})\Gamma^{j}_{\chi^{2}_i}\Gamma^{i}_{\chi_i}\Gamma^{i}.
\end{eqnarray*}
If $a_{ij}=-1$, then $d_i=1, c_{ij}=-1$ and so:
$$\Gamma^{i}\cdot (\Gamma^{i}\cdot \Gamma^{j})=(1+q^{-2})\Gamma^{i}_{\chi_i\chi_j}\Gamma^{i}_{\chi_j}\Gamma^{j}+
(q+q^{-1})\Gamma^{i}_{\chi_i\chi_j}\Gamma^{j}_{\chi_i}\Gamma^{i}+(q^{2}+1)\Gamma^{j}_{\chi^{2}_i}\Gamma^{i}_{\chi_i}\Gamma^{i},$$
$$\Gamma^{i}\cdot (\Gamma^{j}\cdot \Gamma^{i})=(q+q^{-1})\Gamma^{i}_{\chi_i\chi_j}\Gamma^{i}_{\chi_j}\Gamma^{j}+
2\Gamma^{i}_{\chi_i\chi_j}\Gamma^{j}_{\chi_i}\Gamma^{i}+(q+q^{-1})\Gamma^{j}_{\chi^{2}_i}\Gamma^{i}_{\chi_i}\Gamma^{i},$$
and:
$$(\Gamma^{j}\cdot \Gamma^{i})\cdot \Gamma^{i}=(q^{2}+1)\Gamma^{i}_{\chi_i\chi_j}\Gamma^{i}_{\chi_j}\Gamma^{j}+
(q+q^{-1})\Gamma^{i}_{\chi_i\chi_j}\Gamma^{j}_{\chi_i}\Gamma^{i}+(1+q^{-2})\Gamma^{j}_{\chi^{2}_i}\Gamma^{i}_{\chi_i}\Gamma^{i}.$$

Therefore, we have the following Serre relation in case $a_{ij}=-1$:

$$\Gamma^{i}\cdot (\Gamma^{i}\cdot \Gamma^{j})-(q+q^{-1})\Gamma^{i}\cdot (\Gamma^{j}\cdot \Gamma^{i})+(\Gamma^{j}\cdot \Gamma^{i})\cdot \Gamma^{i}=0.$$
So, the proof for ADE types is done.

For the other types, we need to consider the case $a_{ij}=-2$ or $a_{ij}=-3$. Here we will omit the detail from the computation since they are similar, and
we only state the results. Let $\Gamma^{i^{r}ji^{s}}$ be the following path in $Q(M)$:
$$1\to \chi_i\to \cdots \to \chi_{i}^{s}\to \chi_{i}^{s}\chi_j\to \chi_{i}^{s+1}\chi_j\to \cdots \to \chi_{i}^{s+r}\chi_j.$$
Then we have:
\begin{eqnarray*}
(\Gamma^{i})^{3}\cdot \Gamma^{j}&=&q^{3c_{ij}}(1+q^{c_{ii}})(1+q^{c_{ii}}+q^{2c_{ii}})\Gamma^{i^{3}j}\\
&&+q^{2c_{ij}}(1+q^{c_{ii}})(1+q^{c_{ii}}+q^{2c_{ii}})\Gamma^{i^{2}ji}\\
&&+q^{c_{ij}}(1+q^{c_{ii}})(1+q^{c_{ii}}+q^{2c_{ii}})\Gamma^{iji^{2}}\\
&&+(1+q^{c_{ii}})(1+q^{c_{ii}}+q^{2c_{ii}})\Gamma^{ji^{3}};\\
(\Gamma^{i})^{2}\cdot \Gamma^{j}\cdot \Gamma^{i}&=&q^{2c_{ij}}(1+q^{c_{ii}})(1+q^{c_{ii}}+q^{2c_{ii}})\Gamma^{i^{3}j}\\
&&+q^{c_{ij}}(1+q^{c_{ii}})(1+q^{c_{ii}}+q^{2c_{ii}+2c_{ij}})\Gamma^{i^{2}ji}\\
&&+(1+q^{c_{ii}})(1+q^{c_{ii}+2c_{ij}}+q^{2c_{ii}+2c_{ij}})\Gamma^{iji^{2}}\\
&&+q^{c_{ij}}(1+q^{c_{ii}})(1+q^{c_{ii}}+q^{2c_{ii}})\Gamma^{ji^{3}};\\
\Gamma^{i}\cdot \Gamma^{j}\cdot (\Gamma^{i})^{2}&=&q^{c_{ij}}(1+q^{c_{ii}})(1+q^{c_{ii}}+q^{2c_{ii}})\Gamma^{i^{3}j}\\
&&+(1+q^{c_{ii}})(1+q^{c_{ii}+2c_{ij}}+q^{2c_{ii}+2c_{ij}})\Gamma^{i^{2}ji}\\
&&+q^{c_{ij}}(1+q^{c_{ii}})(1+q^{c_{ii}}+q^{2c_{ii}+2c_{ij}})\Gamma^{iji^{2}}\\
&&+q^{2c_{ij}}(1+q^{c_{ii}})(1+q^{c_{ii}}+q^{2c_{ii}})\Gamma^{ji^{3}};\\
\Gamma^j(\Gamma^{i})^{3}&=&(1+q^{c_{ii}})(1+q^{c_{ii}}+q^{2c_{ii}})\Gamma^{i^{3}j}\\
&&+q^{c_{ij}}(1+q^{c_{ii}})(1+q^{c_{ii}}+q^{2c_{ii}})\Gamma^{i^{2}ji}\\
&&+q^{2c_{ij}}(1+q^{c_{ii}})(1+q^{c_{ii}}+q^{2c_{ii}})\Gamma^{iji^{2}}\\
&&+q^{3c_{ij}}(1+q^{c_{ii}})(1+q^{c_{ii}}+q^{2c_{ii}})\Gamma^{ji^{3}};
\end{eqnarray*}
One can use the above results to build the Serre relation for the Lie algebras of BCF types.  We obtain:

\begin{eqnarray*}
(\Gamma^{i})^{4}\cdot \Gamma^{j}&=&q^{4c_{ij}}(1+q^{c_{ii}})(1+q^{c_{ii}}+q^{2c_{ii}})(1+q^{c_{ii}}+q^{2c_{ii}}+q^{3c_{ii}})\Gamma^{i^{4}j}\\
&&+q^{3c_{ij}}(1+q^{c_{ii}})(1+q^{c_{ii}}+q^{2c_{ii}})(1+q^{c_{ii}}+q^{2c_{ii}}+q^{3c_{ii}})\Gamma^{i^{3}ji}\\
&&+q^{2c_{ij}}(1+q^{c_{ii}})(1+q^{c_{ii}}+q^{2c_{ii}})(1+q^{c_{ii}}+q^{2c_{ii}}+q^{3c_{ii}})\Gamma^{i^{2}ji^{2}}\\
&&+q^{c_{ij}}(1+q^{c_{ii}})(1+q^{c_{ii}}+q^{2c_{ii}})(1+q^{c_{ii}}+q^{2c_{ii}}+q^{3c_{ii}})\Gamma^{iji^{3}}\\
&&+(1+q^{c_{ii}})(1+q^{c_{ii}}+q^{2c_{ii}})(1+q^{c_{ii}}+q^{2c_{ii}}+q^{3c_{ii}})\Gamma^{ji^{4}};
\end{eqnarray*}

\begin{eqnarray*}
(\Gamma^{i})^{3}\cdot \Gamma^{j}\cdot \Gamma^{i}&=&q^{3c_{ij}}(1+q^{c_{ii}})(1+q^{c_{ii}}+q^{2c_{ii}})(1+q^{c_{ii}}+q^{2c_{ii}}+q^{3c_{ii}})\Gamma^{i^{4}j}\\
&&+q^{2c_{ij}}(1+q^{c_{ii}})(1+q^{c_{ii}}+q^{2c_{ii}})(1+q^{c_{ii}}+q^{2c_{ii}}+q^{3c_{ii}+2c_{ij}})\Gamma^{i^{3}ji}\\
&&+q^{c_{ij}}(1+q^{c_{ii}})(1+q^{c_{ii}}+q^{2c_{ii}})(1+q^{c_{ii}}+q^{2c_{ii}+2c_{ij}}+q^{3c_{ii}+2c_{ij}})\Gamma^{i^{2}ji^{2}}\\
&&+(1+q^{c_{ii}})(1+q^{c_{ii}}+q^{2c_{ii}})(1+q^{c_{ii}+2c_{ij}}+q^{2c_{ii}+2c_{ij}}+q^{3c_{ii}+2c_{ij}})\Gamma^{iji^{3}}\\
&&+q^{c_{ij}}(1+q^{c_{ii}})(1+q^{c_{ii}}+q^{2c_{ii}})(1+q^{c_{ii}}+q^{2c_{ii}}+q^{3c_{ii}})\Gamma^{ji^{4}};
\end{eqnarray*}
\begin{eqnarray*}
(\Gamma^{i})^{2}\cdot \Gamma^{j}\cdot (\Gamma^{i})^{2}&=&q^{2c_{ij}}(1+q^{c_{ii}})(1+q^{c_{ii}}+q^{2c_{ii}})(1+q^{c_{ii}}+q^{2c_{ii}}+q^{3c_{ii}})\Gamma^{i^{4}j}\\
&&+q^{c_{ij}}(1+q^{c_{ii}})(1+q^{c_{ii}}+q^{2c_{ii}})(1+q^{c_{ii}}+q^{2c_{ii}+2c_{ij}}+q^{3c_{ii}+2c_{ij}})\Gamma^{i^{3}ji}\\
&&+(1+q^{c_{ii}})^{2}(1+q^{c_{ii}+2c_{ij}}+2q^{2c_{ii}+2c_{ij}}+q^{3c_{ii}+2c_{ij}}+q^{4c_{ii}+4c_{ij}})\Gamma^{i^{2}ji^{2}}\\
&&+q^{c_{ij}}(1+q^{c_{ii}})(1+q^{c_{ii}}+q^{2c_{ii}})(1+q^{c_{ii}}+q^{2c_{ii}+2c_{ij}}+q^{3c_{ii}+2c_{ij}})\Gamma^{iji^{3}}\\
&&+q^{2c_{ij}}(1+q^{c_{ii}})(1+q^{c_{ii}}+q^{2c_{ii}})(1+q^{c_{ii}}+q^{2c_{ii}}+q^{3c_{ii}})\Gamma^{ji^{4}};
\end{eqnarray*}
\begin{eqnarray*}
\Gamma^{i}\cdot \Gamma^{j}\cdot (\Gamma^i)^3&=&q^{c_{ij}}(1+q^{c_{ii}})(1+q^{c_{ii}}+q^{2c_{ii}})(1+q^{c_{ii}}+q^{2c_{ii}}+q^{3c_{ii}})\Gamma^{i^{4}j}\\
&&+(1+q^{c_{ii}})(1+q^{c_{ii}}+q^{2c_{ii}})(1+q^{c_{ii}+2c_{ij}}+q^{2c_{ii}+2c_{ij}}+q^{3c_{ii}+2c_{ij}})\Gamma^{i^{3}ji}\\
&&+q^{c_{ij}}(1+q^{c_{ii}})(1+q^{c_{ii}}+q^{2c_{ii}})(1+q^{c_{ii}}+q^{2c_{ii}+2c_{ij}}+q^{3c_{ii}+2c_{ij}})\Gamma^{i^{2}ji^{2}}\\
&&+q^{2c_{ij}}(1+q^{c_{ii}})(1+q^{c_{ii}}+q^{2c_{ii}})(1+q^{c_{ii}}+q^{2c_{ii}}+q^{3c_{ii}+2c_{ij}})\Gamma^{iji^{3}}\\
&&+q^{3c_{ij}}(1+q^{c_{ii}})(1+q^{c_{ii}}+q^{2c_{ii}})(1+q^{c_{ii}}+q^{2c_{ii}}+q^{3c_{ii}})\Gamma^{ji^{4}};
\end{eqnarray*}
\begin{eqnarray*}
\Gamma^{j}\cdot(\Gamma^{i})^{4}&=&(1+q^{c_{ii}})(1+q^{c_{ii}}+q^{2c_{ii}})(1+q^{c_{ii}}+q^{2c_{ii}}+q^{3c_{ii}})\Gamma^{i^{4}j}\\
&&+q^{c_{ij}}(1+q^{c_{ii}})(1+q^{c_{ii}}+q^{2c_{ii}})(1+q^{c_{ii}}+q^{2c_{ii}}+q^{3c_{ii}})\Gamma^{i^{3}ji}\\
&&+q^{2c_{ij}}(1+q^{c_{ii}})(1+q^{c_{ii}}+q^{2c_{ii}})(1+q^{c_{ii}}+q^{2c_{ii}}+q^{3c_{ii}})\Gamma^{i^{2}ji^{2}}\\
&&+q^{3c_{ij}}(1+q^{c_{ii}})(1+q^{c_{ii}}+q^{2c_{ii}})(1+q^{c_{ii}}+q^{2c_{ii}}+q^{3c_{ii}})\Gamma^{iji^{3}}\\
&&+q^{4c_{ij}}(1+q^{c_{ii}})(1+q^{c_{ii}}+q^{2c_{ii}})(1+q^{c_{ii}}+q^{2c_{ii}}+q^{3c_{ii}})\Gamma^{ji^{4}}.
\end{eqnarray*}
Similarly, the Serre relation for type $G_{2}$ can be obtained from the above equalities.
\end{proof}

\section{The Drinfeld double of $A_{q}(\mathfrak{g})$}
In this section, we will determine the structure of $D(A_{q}(\mathfrak{g}))$. We shall first  describe the  algebraic structure.  To this end, we need to compute the elements $\gamma,\mathbf{f},\chi$ and $\omega$ according to
the formulas \eqref{eq;1.14}-\eqref{eq;1.15}.
The following easy observation \cite[Lem. 3.2]{Liu} will be used frequently throughout the paper.

\begin{lemma}\label{l4.1} For any two natural numbers $i,j$, we have following identity:
\begin{equation}[\frac{i+j'}{n}]=[\frac{i+j}{n}]-[\frac{j}{n}].
\end{equation}\end{lemma}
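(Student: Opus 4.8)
The plan is to reduce the identity to the elementary fact that shifting the numerator of a floor by an integer multiple of $n$ shifts the value of the floor by that integer. First I would apply the division algorithm to $j$: by definition $j'$ is the remainder of $j$ upon division by $n$, so $0 \le j' \le n-1$ and
\[
j = \left[\frac{j}{n}\right] n + j'.
\]
Substituting this into the numerator on the left-hand side of the claimed identity gives
\[
\left[\frac{i+j}{n}\right] = \left[\frac{i+j' + [\frac{j}{n}]\, n}{n}\right] = \left[\frac{j}{n}\right] + \left[\frac{i+j'}{n}\right],
\]
where the second equality uses the standard fact that $[x + k] = k + [x]$ for every integer $k$, applied with $x = \frac{i+j'}{n}$ and $k = [\frac{j}{n}]$. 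Rearranging the last display yields exactly $[\frac{i+j'}{n}] = [\frac{i+j}{n}] - [\frac{j}{n}]$, which is the assertion.

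There is essentially no obstacle here; the only points requiring a word of care are purely bookkeeping: that $i$ and $j$ are natural numbers, so every quantity appearing is a well-defined non-negative integer, and that the bracket $[\frac{\bullet}{\bullet}]$ denotes the floor function fixed in the introduction (the identity would fail for the ceiling or nearest-integer functions). With those conventions in place, the lemma follows in one line from the remainder decomposition of $j$, so in the write-up I would simply display the two equations above and conclude.
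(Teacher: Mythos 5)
Your proof is correct: writing $j=[\frac{j}{n}]n+j'$ and using $[x+k]=[x]+k$ for integer $k$ is a complete argument. The paper itself offers no proof of this lemma, quoting it as an easy observation from \cite[Lem.~3.2]{Liu}, and your division-algorithm computation is exactly the standard one-line verification expected there.
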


\begin{lemma}\label{14.2} For the quasi-Hopf algebra $A_{q}(\mathfrak{g})$, we have:
\begin{gather*} \gamma=\sum_{b,c\in (\mathbbm{Z}_{n})^{m}}\prod_{i,j=1}^{m}
\q^{c_{ij}(-(b_{i}+c_{i})[\frac{b_{j}+c_{j}}{n}]+c_{i}-
c_{i}[\frac{n-b_{j}}{n}]+b_{i}[\frac{n-1+b_{j}}{n}]+c_{i}[\frac{n-1+c_{j}}{n}])}\mathbf{1}_{b}\otimes \mathbf{1}_{c},\\
\mathbf{f}=\sum_{e,f\in (\mathbbm{Z}_{n})^{m}}\prod_{i,j=1}^{m}
\q^{c_{ij}(-e_{i}+(e_{i}+f_{i})([\frac{n-(e_{j}+f_{j})'}{n}]-[\frac{e_{j}+f_{j}}{n}])-f_{i}[\frac{n-e_{j}}{n}]
+e_{i}[\frac{n-1+e_{j}}{n}]+f_{i}[\frac{n-1+f_{j}}{n}])}\\ \times \mathbf{1}_{e}\otimes \mathbf{1}_{f},\\
\chi=\sum_{a,b,c,d\in (\mathbbm{Z}_{n})^{m}}\prod_{i,j=1}^{m}
\q^{c_{ij}(-a_{i}[\frac{b_{j}+c_{j}}{n}]+(a_{i}+b_{i})[\frac{c_{i}+d_{i}}{n}])}\mathbf{1}_{a}\otimes \mathbf{1}_{b}
\otimes \mathbf{1}_{c}\otimes \mathbf{1}_{d},\end{gather*}
and:
\begin{eqnarray*}\omega&=&\sum_{a,b,c,d,e\in (\mathbbm{Z}_{n})^{m}}\prod_{i,j=1}^{m}
\q^{c_{ij}(-a_{i}[\frac{b_{j}+c_{j}+d_{j}}{n}]+(a_{i}+b_{i}+c_{i}+d_{i}+e_{i})[\frac{d_{j}+e_{j}}{n}]
-e_{i}+e_{i}[\frac{n-d_j}{n}])}\\
&&\;\;\;\;\;\;\;\;\;\;\;\;\;\;\;\;\;\;\;\;\;\;\;\times\mathbf{1}_{a}\otimes \mathbf{1}_{b}
\otimes \mathbf{1}_{c}\otimes S(\mathbf{1}_{d}) \otimes S(\mathbf{1}_{e}).
\end{eqnarray*}
\end{lemma}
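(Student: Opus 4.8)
The plan is to reduce each of the four elements $\gamma,\mathbf{f},\chi,\omega$ to a bookkeeping of $\q$-exponents indexed by $(\Z_n)^m$, by exploiting one structural fact: $\phi=d(J)$ (Lemma \ref{l1}), the element $\alpha$, the element $\beta=1$, and every value of $S$ that occurs below all lie in the commutative subalgebra $R:=\k\{\mathbf{1}_a\mid a\in(\Z_n)^m\}$ of $A_q(\mathfrak g)$. On $R$ one has the orthogonality $\mathbf{1}_a\mathbf{1}_b=\delta_{a,b}\mathbf{1}_a$; the comultiplication is ``grouplike'', $\D_J(\mathbf{1}_a)=\sum_{b+c=a}\mathbf{1}_b\otimes\mathbf{1}_c$ (since $\D_J(h_i)=h_i\otimes h_i$ by Lemma \ref{l2}), so in particular $R$ is a cocommutative subcoalgebra and $\D_J^{op}=\D_J$ on it; and $S(\mathbf{1}_a)=\mathbf{1}_{-a}$ (since $S(h_i)=h_i^{-1}$). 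Because $\gamma,\mathbf{f},\chi,\omega$ are built only from $\phi$, $\alpha$, $\beta$, $S$ and the coproduct, each is automatically a linear combination of tensor products of orthogonal idempotents with scalar coefficients, and the inverse of any such element is obtained simply by inverting its scalars; this is what makes the closed forms in the statement possible.

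With this in hand I would first record $\phi^{-1}=\sum_{a,b,c}\big(\prod_{i,j}\q^{\,c_{ij}a_i[\frac{b_j+c_j}{n}]}\big)\mathbf{1}_a\otimes\mathbf{1}_b\otimes\mathbf{1}_c$ and handle $\chi$: expand $(\D\otimes id\otimes id)(\phi^{-1})$ using $\D(\mathbf{1}_a)=\sum_{u+v=a}\mathbf{1}_u\otimes\mathbf{1}_v$, multiply by $\phi\otimes 1$, and let idempotent orthogonality identify $u,v$ with the first two legs of $\phi$; after renaming the four surviving indices this is exactly the asserted formula for $\chi$. For $\gamma$, the same two moves (coproduct of a single $\mathbf{1}_a$, then orthogonality) applied to $(1\otimes\phi^{-1})(id\otimes id\otimes\D)(\phi)$ produce the four-fold tensor $\sum T^i\otimes U^i\otimes V^i\otimes W^i$; feeding $S(U^i)\otimes S(T^i)$, $\alpha\otimes\alpha$, $V^i\otimes W^i$ into \eqref{eq;1.14} and using $S(\mathbf{1}_a)=\mathbf{1}_{-a}$, the explicit coefficients of $\alpha$ from Lemma \ref{l2}, and Lemma \ref{l4.1} to trade each floor function of the shape $[\frac{i+(u+v)'}{n}]$ for $[\frac{i+u+v}{n}]-[\frac{u+v}{n}]$, one assembles the exponent into the stated expression for $\gamma$.

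Next, $\mathbf{f}$ comes from \eqref{eq;1.11}: since $\beta=1$ one has $\overline{Y}^i\beta S(\overline{Z}^i)=\mathbf{1}_b\mathbf{1}_{-c}=\delta_{b,-c}\mathbf{1}_b$, which removes one summation and localizes the $\phi^{-1}$-exponent; applying $(S\otimes S)\D^{op}=(S\otimes S)\D$ to $\overline{X}^i$, applying $\D$ to $\overline{Y}^i\beta S(\overline{Z}^i)$, and multiplying the three $R\otimes R$-valued factors slot by slot leaves a single sum over $(\Z_n)^m\times(\Z_n)^m$ which, reorganized once more via Lemma \ref{l4.1}, is the displayed $\mathbf{f}$; its inverse $\mathbf{f}^{-1}$ is then read off by negating the exponent. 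Finally, $\omega$ is obtained from \eqref{eq;1.15}: apply $id\otimes\D\otimes S\otimes S$ to $\chi$ (using $\D(\mathbf{1}_c)=\sum_{p+q=c}\mathbf{1}_p\otimes\mathbf{1}_q$ and $S(\mathbf{1}_d)=\mathbf{1}_{-d}$, though the final answer is more transparent if one leaves $S(\mathbf{1}_d)$ and $S(\mathbf{1}_e)$ unsimplified, as in the statement), multiply on the right by $\phi\otimes 1\otimes 1$ and on the left by $1\otimes 1\otimes 1\otimes\tau(\mathbf{f}^{-1})$, and collapse all idempotent products.

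I expect the $\omega$ step to be the real obstacle. It carries five independent indices $a,b,c,d,e\in(\Z_n)^m$; the contributions of $\chi$, of $\phi$, and of $\tau(\mathbf{f}^{-1})$ each deposit nested floor-function exponents that overlap on the same tensor legs; and folding them into the single closed form of the statement needs several careful applications of Lemma \ref{l4.1} to replace $(\,\cdot\,)'$ by honest sums, together with scrupulous bookkeeping of which legs the $\tau$-swap and the two copies of $S$ act on. The $\chi$, $\gamma$ and $\mathbf{f}$ computations are of the same nature but with fewer indices and present no conceptual difficulty once the reduction to $R$ is made. One caveat to keep in mind throughout is the Remark following Theorem \ref{t1}: it is the corrected formulas \eqref{eq;1.11} and \eqref{eq;1.12} (and hence the corrected \eqref{eq;1.15}) that must be used here, not the versions printed in \cite{Sch,FN}.
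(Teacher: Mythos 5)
Your proposal is correct and follows essentially the same route as the paper: the paper's proof is precisely the brute-force execution of your outline, expanding $\phi$, $\phi^{-1}$, $\alpha$, and $S$ in the orthogonal idempotents $\mathbf{1}_a$, using $\D(\mathbf{1}_a)=\sum_{u+v=a}\mathbf{1}_u\otimes\mathbf{1}_v$ and $S(\mathbf{1}_a)=\mathbf{1}_{-a}$, collapsing sums by orthogonality, and repeatedly invoking Lemma \ref{l4.1} (plus the identity $[\frac{n-1+z}{n}]+[\frac{n-z}{n}]=1$) to normalize the floor-function exponents. What remains in your write-up is only the explicit exponent bookkeeping, which is exactly the content of the paper's computation and contains no ideas beyond those you have already identified.
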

\begin{proof} By definition, we have
\begin{eqnarray*}
&& T^{i}\otimes U^{i}\otimes V^{i}\otimes W^{i}\\
&=&(1\otimes \phi^{-1})(id\otimes id\otimes\D)(\phi)\\
&=&\sum_{a,b,c\in (\mathbbm{Z}_{n})^{m}}\prod_{i,j=1}^{m}
\q^{c_{ij}a_{i}[\frac{b_{j}+c_{j}}{n}]} 1\otimes\mathbf{1}_{a}\otimes \mathbf{1}_{b}
\otimes \mathbf{1}_{c}\times\\
&& \sum_{d,e,f^{1},f^{2}\in (\mathbbm{Z}_{n})^{m}}\prod_{i,j=1}^{m}
\q^{-c_{ij}d_{i}[\frac{e_{j}+(f^{1}_{j}+f^{2}_{j})'}{n}]} \mathbf{1}_{d}\otimes\mathbf{1}_{e}\otimes \mathbf{1}_{f^1}
\otimes \mathbf{1}_{f^2}\\
&=&\sum_{a,b,c,d\in (\mathbbm{Z}_{n})^{m}}\prod_{i,j=1}^{m}\q^{c_{ij}(a_{i}[\frac{b_{j}+c_{j}}{n}]-d_{i}[\frac{a_{j}+(b_{j}+c_{j})'}{n}])}\mathbf{1}_{d}\otimes \mathbf{1}_{a}
\otimes \mathbf{1}_{b}\otimes \mathbf{1}_{c}
\end{eqnarray*}
and so:
\begin{eqnarray*}\gamma&=&(S(U^{i})\otimes S(T^{i}))(\alpha\otimes\alpha)(V^{i}\otimes W^{i})\\
&=&\sum_{a,b,c,d\in (\mathbbm{Z}_{n})^{m}}\prod_{i,j=1}^{m}\q^{c_{ij}(a_{i}[\frac{b_{j}+c_{j}}{n}]-d_{i}[\frac{a_{j}+(b_{j}+c_{j})'}{n}])}
S(\mathbf{1}_{a})\alpha\mathbf{1}_{b}\otimes S(\mathbf{1}_{d})\alpha\mathbf{1}_{c}\\
&=&\sum_{a,b,c,d\in (\mathbbm{Z}_{n})^{m}}\prod_{i,j=1}^{m}\q^{c_{ij}(a_{i}[\frac{b_{j}+c_{j}}{n}]-d_{i}[\frac{a_{j}+(b_{j}+c_{j})'}{n}]+
b_{i}[\frac{n-1+b_{j}}{n}]+c_{i}[\frac{n-1+c_{j}}{n}])}\\
&&\;\;\;\;\;\;\;\;\;\;\;\;\;\;\;\;\;\;\;\;\times S(\mathbf{1}_{a})\mathbf{1}_{b}\otimes S(\mathbf{1}_{d})\mathbf{1}_{c}\\
&=&\sum_{b,c\in (\mathbbm{Z}_{n})^{m}}\prod_{i,j=1}^{m}\q^{c_{ij}((n-b_{i})'[\frac{b_{j}+c_{j}}{n}]-(n-c_{i})'[\frac{(n-b_{j})'+(b_{j}+c_{j})'}{n}]+
b_{i}[\frac{n-1+b_{j}}{n}]+c_{i}[\frac{n-1+c_{j}}{n}])}\\
&&\;\;\;\;\;\;\;\;\;\;\;\;\;\;\;\times \mathbf{1}_{b}\otimes \mathbf{1}_{c}
\end{eqnarray*}
\begin{eqnarray*}
&=&\sum_{b,c\in (\mathbbm{Z}_{n})^{m}}\prod_{i,j=1}^{m}\q^{c_{ij}(-b_{i}[\frac{b_{j}+c_{j}}{n}]+c_{i}([\frac{n-b_{j}+b_{j}+c_{j}}{n}]
-[\frac{n-b_{j}}{n}]-[\frac{b_{j}+c_{j}}{n}])+
b_{i}[\frac{n-1+b_{j}}{n}]+c_{i}[\frac{n-1+c_{j}}{n}])}\\&&\;\;\;\;\;\;\;\;\;\;\;\;\;\;\;\times \mathbf{1}_{b}\otimes \mathbf{1}_{c}\\
&=&\sum_{b,c\in (\mathbbm{Z}_{n})^{m}}\prod_{i,j=1}^{m}\q^{c_{ij}(-b_{i}[\frac{b_{j}+c_{j}}{n}]+c_{i}(1
-[\frac{n-b_{j}}{n}]-[\frac{b_{j}+c_{j}}{n}])+
b_{i}[\frac{n-1+b_{j}}{n}]+c_{i}[\frac{n-1+c_{j}}{n}])}\\
&&\;\;\;\;\;\;\;\;\;\;\;\;\;\;\;\times \mathbf{1}_{b}\otimes \mathbf{1}_{c}\\
&=&\sum_{b,c\in (\mathbbm{Z}_{n})^{m}}\prod_{i,j=1}^{m}\q^{c_{ij}(-(b_{i}+c_{i})
[\frac{b_{j}+c_{j}}{n}]+c_{i}-c_{i}[\frac{n-b_{j}}{n}]+b_{i}[\frac{n-1+b_{j}}{n}]+c_{i}[\frac{n-1+c_{j}}{n}])}\\
&&\;\;\;\;\;\;\;\;\;\;\;\;\;\;\;\times \mathbf{1}_{b}\otimes \mathbf{1}_{c},
\end{eqnarray*}
where the fifth equality follows from Lemma \ref{l4.1}.

Therefore:
\begin{eqnarray*}\mathbf{f}&=&\sum (S\otimes S)(\D^{op}(\overline{X}^{i}))\cdot \mathbf{\gamma} \cdot \D(\overline{Y}^{i}\beta S(\overline{Z}^{i}))\\
&=&\sum_{a^{1},a^{2},b,c\in (\mathbbm{Z}_{n})^{m}}\prod_{i,j=1}^{m}\q^{c_{ij}(a^{1}_{i}+a^{2}_{i})[\frac{b_{j}+c_{j}}{n}]}
(S(\mathbf{1}_{a^{1}})\otimes S(\mathbf{1}_{a^{2}}))\gamma\D(\mathbf{1}_{b}S( \mathbf{1}_{c}))\\
&=&\sum_{e,f\in (\mathbbm{Z}_{n})^{m}}\prod_{i,j=1}^{m}\q^{c_{ij}((n-e_{i})'+(n-f_{i})')[\frac{(e_{j}+f_{j})'+(n-(e_{j}+f_{j})')'}{n}]}
\gamma \mathbf{1}_{e}\otimes \mathbf{1}_{f}\\
&=&\sum_{e,f\in (\mathbbm{Z}_{n})^{m}}\prod_{i,j=1}^{m}\q^{-c_{ij}(e_{i}+f_{i})(1-[\frac{n-(e_{j}+f_{j})'}{n}])}
\gamma \mathbf{1}_{e}\otimes \mathbf{1}_{f}\\
&=&\sum_{e,f\in (\mathbbm{Z}_{n})^{m}}\prod_{i,j=1}^{m}\q^{-c_{ij}(e_{i}+f_{i})(1-[\frac{n-(e_{j}+f_{j})'}{n}])}\\
&&\q^{c_{ij}(-(e_{i}+f_{i})
[\frac{e_{j}+f_{j}}{n}]+f_{i}-f_{i}[\frac{n-e_{j}}{n}]+e_{i}[\frac{n-1+e_{j}}{n}]+f_{i}[\frac{n-1+f_{j}}{n}])}\mathbf{1}_{e}\otimes \mathbf{1}_{f} \\
&=&\sum_{e,f\in (\mathbbm{Z}_{n})^{m}}\prod_{i,j=1}^{m}
\q^{c_{ij}(-e_{i}+(e_{i}+f_{i})([\frac{n-(e_{j}+f_{j})'}{n}]-[\frac{e_{j}+f_{j}}{n}])-f_{i}[\frac{n-e_{j}}{n}]
+e_{i}[\frac{n-1+e_{j}}{n}]+f_{i}[\frac{n-1+f_{j}}{n}])}\\
&&\;\;\;\;\;\;\;\;\;\;\;\;\;\;\;\; \times\mathbf{1}_{e}\otimes \mathbf{1}_{f},
\end{eqnarray*}
where the fourth equality follows also from Lemma \ref{l4.1}.

The computation for $\chi$ is easy. Indeed:
\begin{eqnarray*}\chi&=&(\phi\otimes 1)(\D\otimes id\otimes id)(\phi^{-1})\\
&=&\sum_{a,b,c\in (\mathbbm{Z}_{n})^{m}}\prod_{i,j=1}^{m}\q^{-c_{ij}a_{i}[\frac{b_{j}+c_{j}}{n}]}\mathbf{1}_{a}\otimes \mathbf{1}_{b}
\otimes \mathbf{1}_{c} \otimes 1\\
&&\sum_{d^{1},d^{2},e,f\in (\mathbbm{Z}_{n})^{m}}\prod_{i,j=1}^{m}\q^{c_{ij}(d^{1}_{i}+d^{2}_{i})[\frac{e_{j}+f_{j}}{n}]}\mathbf{1}_{d^{1}}\otimes \mathbf{1}_{d^{2}}
\otimes \mathbf{1}_{e} \otimes \mathbf{1}_{f}\\
&=&\sum_{a,b,c,d\in (\mathbbm{Z}_{n})^{m}}\prod_{i,j=1}^{m}
\q^{c_{ij}(-a_{i}[\frac{b_{j}+c_{j}}{n}]+(a_{i}+b_{i})[\frac{c_{j}+d_{j}}{n}])}\mathbf{1}_{a}\otimes \mathbf{1}_{b}
\otimes \mathbf{1}_{c}\otimes \mathbf{1}_{d}.
\end{eqnarray*}
Now we are able to compute the element $\omega$.
\begin{eqnarray*} \omega&=&(1\otimes 1\otimes 1\otimes \tau(\mathbf{f}^{-1}))(id\otimes \D\otimes S\otimes S)(\mathbf{\chi})(\phi\otimes 1\otimes 1)\\
&=& \sum_{e,f\in (\mathbbm{Z}_{n})^{m}}\prod_{i,j=1}^{m}
\q^{c_{ij}(e_{i}-(e_{i}+f_{i})([\frac{n-(e_{j}+f_{j})'}{n}]-[\frac{e_{j}+f_{j}}{n}])+f_{i}[\frac{n-e_{j}}{n}]
-e_{i}[\frac{n-1+e_{j}}{n}]-f_{i}[\frac{n-1+f_{j}}{n}])}\\
&&\;\;\;\;\;\;\;\;\;\;\;\;\;\;\;\; \ \  1\otimes 1\otimes 1\otimes \mathbf{1}_{f}\otimes \mathbf{1}_{e}\\
&&\sum_{a,b^{1},b^{2},c,d\in (\mathbbm{Z}_{n})^{m}}\prod_{i,j=1}^{m}
\q^{c_{ij}(-a_{i}[\frac{(b^1_{j}+b^2_{j})'+c_{j}}{n}]+(a_{i}+b^1_{i}+b^2_{i})[\frac{c_{j}+d_{j}}{n}])}\mathbf{1}_{a}\otimes \mathbf{1}_{b^{1}}
\otimes \mathbf{1}_{b^{2}}
\otimes S(\mathbf{1}_{c})\otimes S(\mathbf{1}_{d})\\
&&\sum_{a,b^{1},b^{2}\in (\mathbbm{Z}_{n})^{m}}\prod_{i,j=1}^{m}
\q^{-c_{ij}a_{i}[\frac{b^1_{j}+b^2_{j}}{n}]} \mathbf{1}_{a}\otimes \mathbf{1}_{b^{1}}
\otimes \mathbf{1}_{b^{2}}
\otimes 1\otimes 1\\
&=&\sum_{a,b^{1},b^{2},c,d\in (\mathbbm{Z}_{n})^{m}}\prod_{i,j=1}^{m}
\q^{c_{ij}(-a_{i}[\frac{b^1_{j}+b^2_{j}+c_{j}}{n}]+(a_{i}+b^1_{i}+b^2_{i})[\frac{c_{j}+d_{j}}{n}]-d_{i}-c_{i}[\frac{n-(n-d_{j})'}{n}])}\\
&&\q^{c_{ij}((d_{i}+c_{i})
([\frac{n-((n-d_{j})'+(n-c_{j})')'}{n}]-[\frac{(n-d_{j})'+(n-c_{j})'}{n}])
+d_{i}[\frac{n-1+(n-d_{j})'}{n}]+c_{i}[\frac{n-1+(n-c_{j})'}{n}])}\\
&&\mathbf{1}_{a}\otimes \mathbf{1}_{b^{1}}
\otimes \mathbf{1}_{b^{2}}
\otimes S(\mathbf{1}_{c})\otimes S(\mathbf{1}_{d}).
\end{eqnarray*}
Note that we used Lemma \ref{l4.1} in the computation in order to obtain:
 $$\q^{-c_{ij}a_{i}[\frac{b^1_{j}+b^2_{j}}{n}]}\q^{c_{ij}
(-a_{i}[\frac{(b^1_{j}+b^2_{j})'+c_{j}}{n}])}=
\q^{-c_{ij}a_{i}[\frac{b^1_{j}+b^2_{j}+c_{j}}{n}]}.$$
The following equalities can be verified directly:
$$\begin{array}{rll}
[\frac{n-((n-d_{j})'+(n-c_{j})')'}{n}] &=& [\frac{n-(d_{j}+c_{j})'}{n}],\\

[\frac{(n-d_{j})'+(n-c_{j})'}{n}]&=&[\frac{2n-d_{j}-c_{j}}{n}]-[\frac{n-d_{j}}{n}]-[\frac{n-c_{j}}{n}]\\
&=& 1+[\frac{n-(d_{j}+c_{j})'}{n}]-[\frac{d_{j}+c_{j}}{n}]-[\frac{n-d_{j}}{n}]-[\frac{n-c_{j}}{n}],\\

[\frac{n-(n-d_{j})'}{n}]&=&[\frac{n-d_{j}}{n}],\\

[\frac{n-1+(n-d_{j})'}{n}] &=& [\frac{n-1+d_{j}}{n}],\\

[\frac{n-1+(n-c_{j})'}{n}] &=& [\frac{n-1+c_{j}}{n}].
\end{array}$$

Applying these equations to the expression of $\omega$, we obtain:
\begin{eqnarray*}
\omega &=&\sum_{a,b^{1},b^{2},c,d\in (\mathbbm{Z}_{n})^{m}}\prod_{i,j=1}^{m}
\q^{c_{ij}(-a_{i}[\frac{b^1_{j}+b^2_{j}+c_{j}}{n}]+(a_{i}+b^1_{i}+b^2_{i})[\frac{c_{j}+d_{j}}{n}]-d_{i}-c_{i}[\frac{n-d_{j}}{n}])}\\
&&\q^{c_{ij}((d_{i}+c_{i})
(-1+[\frac{d_{j}+c_{j}}{n}]+[\frac{n-d_{j}}{n}]+[\frac{n-c_{j}}{n}])
+d_{i}[\frac{n-1+d_{j}}{n}]+c_{i}[\frac{n-1+c_{j}}{n}])}\\
&&\mathbf{1}_{a}\otimes \mathbf{1}_{b^{1}}
\otimes \mathbf{1}_{b^{2}}
\otimes S(\mathbf{1}_{c})\otimes S(\mathbf{1}_{d}) \\
&=& \sum_{a,b^{1},b^{2},c,d\in (\mathbbm{Z}_{n})^{m}}\prod_{i,j=1}^{m}
\q^{c_{ij}(-a_{i}[\frac{b^1_{j}+b^2_{j}+c_{j}}{n}]+(a_{i}+b^1_{i}+b^2_{i}+d_{i}+c_{i})[\frac{c_{j}+d_{j}}{n}]
)}\\
&&\q^{c_{ij}(d_{i}(-1-1+[\frac{n-d_{j}}{n}]+[\frac{n-c_{j}}{n}]+[\frac{n-1+d_{j}}{n}])+
c_{i}(-1+[\frac{n-d_{j}}{n}]+[\frac{n-c_{j}}{n}]-[\frac{n-d_{j}}{n}]+[\frac{n-1+c_{j}}{n}]))}\\
&&\mathbf{1}_{a}\otimes \mathbf{1}_{b^{1}}
\otimes \mathbf{1}_{b^{2}}
\otimes S(\mathbf{1}_{c})\otimes S(\mathbf{1}_{d}).
\end{eqnarray*}
Now we can apply the following identity to simplify the formula of $\omega$:
$$[\frac{n-1+z}{n}]+[\frac{n-z}{n}]=1,\;\;\;\;\textrm{for}\;0\leq z< n.$$
It follows that

\begin{eqnarray*}
\omega &=& \sum_{a,b^{1},b^{2},c,d\in (\mathbbm{Z}_{n})^{m}}\prod_{i,j=1}^{m}
\q^{c_{ij}(-a_{i}[\frac{b^1_{j}+b^2_{j}+c_{j}}{n}]+(a_{i}+b^1_{i}+b^2_{i}+d_{i}+c_{i})[\frac{c_{j}+d_{j}}{n}]
-d_{i}+d_{i}[\frac{n-c_{j}}{n}])}\\
&&\mathbf{1}_{a}\otimes \mathbf{1}_{b^{1}}
\otimes \mathbf{1}_{b^{2}}
\otimes S(\mathbf{1}_{c})\otimes S(\mathbf{1}_{d})\\
&=&\sum_{a,b,c,d,e\in (\mathbbm{Z}_{n})^{m}}\prod_{i,j=1}^{m}
\q^{c_{ij}(-a_{i}[\frac{b_{j}+c_{j}+d_{j}}{n}]+(a_{i}+b_{i}+c_{i}+d_{i}+e_{i})[\frac{d_{j}+e_{j}}{n}]
-e_{i}+e_{i}[\frac{n-d_j}{n}])}\\
&&\;\;\;\;\;\;\;\;\;\;\;\;\;\;\;\;\;\;\;\  \ \mathbf{1}_{a}\otimes \mathbf{1}_{b}
\otimes \mathbf{1}_{c}\otimes S(\mathbf{1}_{d}) \otimes S(\mathbf{1}_{e}).
\end{eqnarray*}
\end{proof}

The algebraic structure of $D(A_{q}(\mathfrak{g}))$ can be described by the following three propositions, which can be
understood roughly as ``the generating relations for $A_q(\mathfrak{g})$", ``the generating relations for  $M_q(\mathfrak{g})$" and ``the generating relations between $A_q(\mathfrak{g})$
and $M_{q}(\mathfrak{g})$" respectively.

\begin{proposition}\label{p4.3} In $D(A_{q}(\mathfrak{g}))$, we have the following relations
\begin{eqnarray}&&(h_{i}\bi \e)^{n}=1\bi \e,\;\;(h_{i}\bi \e)(h_{j}\bi \e)=(h_{j}\bi\e)(h_{i}\bi\e),\\
&&  (h_{i}\bi\e)(e_{j}\bi\e)(h_{i}\bi\e)^{-1}=\q^{\delta_{i,j}}(e_{j}\bi \e),\;\;(e_{i}\bi\e)^{l_{i}}=0,\\
 && \sum_{r+s=1-a_{ij}}(-1)^{s}\left [
\begin{array}{c} 1-a_{ij}\\s
\end{array}\right]_{d_{i}}(e_{i}\bi\e)^{r}(e_{j}\bi\e)(e_{i}\bi\e)^{s}=0,\;\;\textrm{if}\;\;i\neq j.
\end{eqnarray}
for $1\leq i,j\leq m$.
\end{proposition}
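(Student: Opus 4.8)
The plan is to obtain all four families of relations simultaneously from the structural fact recorded in Theorem \ref{t1}: the quasi-Hopf algebra $A_q(\mathfrak{g})$ sits inside $D(A_q(\mathfrak{g}))$ as a quasi-Hopf subalgebra through the embedding $h \mapsto h \bi \e$, so in particular this map is a morphism of associative algebras. If one prefers to see this directly rather than quote Theorem \ref{t1}, one specialises the product formula $(\star)$ to $\varphi = \psi = \e$: since $a \rightharpoonup \e = \e(a)\e = \e \leftharpoonup a$ and $\e\e = \e$, the right-hand side of $(\star)$ collapses to $g\,h_{(1)(2)}\,\omega^{(3)}\,\e(\omega^{(1)})\,\e(\omega^{(2)})\,\e(\omega^{(4)})\,\e(\omega^{(5)})\,\e(S(h_{(2)}))\,\e(h_{(1)(1)})\bi \e$. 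Feeding in the explicit expression for $\omega$ from Lemma \ref{14.2} and using $\e(\mathbf{1}_a) = \e(S(\mathbf{1}_a)) = \delta_{a,0}$ forces all summation indices of $\omega$ except the one on its third leg to be $0$ and makes the corresponding floor functions in the exponent vanish, so that $\omega^{(3)}\,\e(\omega^{(1)})\,\e(\omega^{(2)})\,\e(\omega^{(4)})\,\e(\omega^{(5)})$ reduces to $1$; the counit axioms for $A_q(\mathfrak{g})$ then give $h_{(1)(2)}\,\e(S(h_{(2)}))\,\e(h_{(1)(1)}) = h$, whence $(g\bi\e)(h\bi\e) = gh\bi\e$.

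With the embedding $\iota\colon A_q(\mathfrak{g}) \hookrightarrow D(A_q(\mathfrak{g}))$, $h\mapsto h\bi\e$, established as an algebra homomorphism, the proposition is then formal. Indeed, its four relations are precisely the images under $\iota$ of the defining relations among the generators $h_i$, $e_i$ of $A_q(\mathfrak{g})$. Recall that, as an algebra, $A_q(\mathfrak{g})$ is the subalgebra $A$ of $H$ generated by $h_i = g_i^n$ and $e_i$, the twist $J$ affecting only the comultiplication and the reassociator. From the relations defining $H$ one reads off $h_i^n = g_i^{n^2} = 1$, $h_i h_j = h_j h_i$, $h_i e_j h_i^{-1} = g_i^n e_j g_i^{-n} = q^{n\delta_{ij}} e_j = \q^{\delta_{ij}} e_j$, $e_i^{l_i} = 0$, and the quantum Serre relations $\sum_{r+s = 1-a_{ij}}(-1)^s\left[\begin{array}{c} 1-a_{ij}\\ s\end{array}\right]_{d_i} e_i^r e_j e_i^s = 0$ for $i \neq j$; applying $\iota$ produces exactly the displayed identities in $D(A_q(\mathfrak{g}))$.

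The only point needing care is the multiplicativity of $\iota$ on products of non-grouplike generators, that is, the counit collapse of $\omega$ sketched above; everything else is bookkeeping. It is worth stressing that $D(A_q(\mathfrak{g}))$ is a genuinely associative algebra, so, unlike the Serre relation for $M_q(\mathfrak{g})$ in Proposition \ref{p3.4}, the relation $\sum_{r+s = 1-a_{ij}}(-1)^s\left[\begin{array}{c} 1-a_{ij}\\ s\end{array}\right]_{d_i}(e_i\bi\e)^r(e_j\bi\e)(e_i\bi\e)^s = 0$ needs no parentheses inserted. I expect this counit computation to be the main (and only mildly technical) obstacle; it is immediate from Lemma \ref{14.2}, or one may simply invoke Theorem \ref{t1}.
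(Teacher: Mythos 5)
Your proposal is correct and follows essentially the same route as the paper, whose entire proof is the one-line observation that $A_{q}(\mathfrak{g})$ is a quasi-Hopf subalgebra of $D(A_{q}(\mathfrak{g}))$ via $h\mapsto h\bi\e$ (Theorem \ref{t1}), so the defining algebra relations of $A_{q}(\mathfrak{g})$ transport verbatim. Your additional unwinding of the product formula $(\star)$ with $\varphi=\psi=\e$ and the counit collapse of $\omega$ is a correct but optional elaboration of what the paper simply quotes.
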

\begin{proof} Follows the fact that $A_{q}(\mathfrak{g})$ is a quasi-Hopf subalgebra of $D(A_{q}(\mathfrak{g}))$.
  \end{proof}

As a subspace, we always have a natural embedding $M_{q}(\mathfrak{g})\hookrightarrow D(A_{q}(\mathfrak{g}))$ through
$\varphi\mapsto 1\bowtie \varphi$ for $\varphi\in M_{q}(\mathfrak{g})$.

\begin{lemma}\label{l4.2} For $h\in A_{q}(\mathfrak{g})$ and $\varphi \in M_{q}(\mathfrak{g})$, we have
$$(h\bowtie \varepsilon)(1\bowtie \varphi)=h\bowtie \varphi.$$
\end{lemma}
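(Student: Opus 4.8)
The claim is a direct consequence of the multiplication rule $(\star)$ of Theorem \ref{t1} together with the explicit shape of the element $\omega$ recorded in Lemma \ref{14.2}. The plan is as follows.

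First I would specialize $(\star)$ to the product $(h\bi\varepsilon)(1\bi\varphi)$, that is, take the left factor to be $g\bi\varphi=h\bi\varepsilon$ and the right factor to be $h\bi\psi=1\bi\varphi$ (so the symbol ``$h$'' occurring in $(\star)$ is the unit $1$ of $A_q(\mathfrak{g})$ and ``$\psi$'' is our $\varphi$). Since $\Delta$ is a unital algebra map, $\Delta(1)=1\otimes 1$, hence $1_{(1)(1)}=1_{(1)(2)}=1_{(2)}=1$, and $S(1)=1$; substituting these, $(\star)$ degenerates to
$$(h\bi\varepsilon)(1\bi\varphi)=h\,\omega^{(3)}\bi\bigl(\omega^{(5)}\rightharpoonup\varphi\leftharpoonup\omega^{(1)}\bigr)\bigl(\omega^{(4)}\rightharpoonup\varepsilon\leftharpoonup\omega^{(2)}\bigr).$$
Next I would evaluate the second leg. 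For $b\in A_q(\mathfrak{g})$ one has $(\omega^{(4)}\rightharpoonup\varepsilon\leftharpoonup\omega^{(2)})(b)=\varepsilon(\omega^{(2)}b\,\omega^{(4)})=\varepsilon(\omega^{(2)})\varepsilon(\omega^{(4)})\varepsilon(b)$, so this functional is the scalar $\varepsilon(\omega^{(2)})\varepsilon(\omega^{(4)})$ times $\varepsilon$. As $\varepsilon$ is a two-sided unit for the (convolution) multiplication of $M_q(\mathfrak{g})$, multiplying by it causes no difficulty, and we get
$$(h\bi\varepsilon)(1\bi\varphi)=\sum\varepsilon(\omega^{(2)})\,\varepsilon(\omega^{(4)})\;h\,\omega^{(3)}\bi\bigl(\omega^{(5)}\rightharpoonup\varphi\leftharpoonup\omega^{(1)}\bigr).$$

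It then remains to check that $(\mathrm{id}\otimes\varepsilon\otimes\mathrm{id}\otimes\varepsilon\otimes\mathrm{id})(\omega)=1\otimes1\otimes1$; granting this, the last display collapses to $h\cdot 1\bi(1\rightharpoonup\varphi\leftharpoonup1)=h\bi\varphi$, which is the assertion. To verify this identity I would insert the formula for $\omega$ from Lemma \ref{14.2}, where $\omega^{(1)}=\mathbf{1}_a$, $\omega^{(2)}=\mathbf{1}_b$, $\omega^{(3)}=\mathbf{1}_c$, $\omega^{(4)}=S(\mathbf{1}_d)$, $\omega^{(5)}=S(\mathbf{1}_e)$: since $\varepsilon(\mathbf{1}_x)=\delta_{x,0}$ and $\varepsilon\circ S=\varepsilon$, applying $\varepsilon$ in legs $2$ and $4$ kills all terms except those with $b=d=0$; and setting $b_j=d_j=0$ in the exponent $c_{ij}\bigl(-a_i[\frac{b_j+c_j+d_j}{n}]+(a_i+b_i+c_i+d_i+e_i)[\frac{d_j+e_j}{n}]-e_i+e_i[\frac{n-d_j}{n}]\bigr)$ of $\q$ makes it vanish, because $[\frac{c_j}{n}]=[\frac{e_j}{n}]=0$ and $[\frac{n}{n}]=1$ for $c_j,e_j\in\{0,\dots,n-1\}$. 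Hence the surviving sum is $\bigl(\sum_a\mathbf{1}_a\bigr)\otimes\bigl(\sum_c\mathbf{1}_c\bigr)\otimes S\bigl(\sum_e\mathbf{1}_e\bigr)=1\otimes1\otimes1$, as needed.

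The only point requiring care is the bookkeeping in the specialization of $(\star)$: one must keep in mind that $\omega$ lies in $A_q(\mathfrak{g})^{\otimes5}$, track precisely which legs the actions $\rightharpoonup,\leftharpoonup$ and the antipode touch, and remember that the product in the $M_q(\mathfrak{g})$-leg of $(\star)$ is non-associative --- which is harmless here only because one of its two arguments is the unit $\varepsilon$. Once Lemma \ref{14.2} is in hand, what remains is the short computation above.
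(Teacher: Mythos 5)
Your proposal is correct, and the specialization of $(\star)$, the identification of the second functional as a scalar multiple of $\varepsilon$, and the check that $(\mathrm{id}\otimes\varepsilon\otimes\mathrm{id}\otimes\varepsilon\otimes\mathrm{id})(\omega)=1\otimes1\otimes1$ (using $\varepsilon(\mathbf{1}_x)=\delta_{x,0}$, $\varepsilon\circ S=\varepsilon$, and the vanishing of the exponent at $b=d=0$ since $[\frac{c_j}{n}]=[\frac{e_j}{n}]=0$ and $[\frac{n}{n}]=1$) all go through; the collapse of the triple sum to $1\otimes1\otimes1$ is legitimate precisely because the surviving coefficient is identically $1$. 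The paper, by contrast, does not compute anything: its proof is a one-line citation of Remark 6.2 in Schauenburg's paper, together with the comment that the reader could alternatively verify the identity from Formula $(\star)$ --- which is exactly the route you carried out. So your argument is the explicit version of the paper's suggested alternative: it buys self-containedness (no reliance on the external remark) at the cost of invoking the explicit formula for $\omega$ from Lemma \ref{14.2}. It is worth noting that the identity $(\mathrm{id}\otimes\varepsilon\otimes\mathrm{id}\otimes\varepsilon\otimes\mathrm{id})(\omega)=1\otimes1\otimes1$ holds for an arbitrary finite dimensional quasi-Hopf algebra (it follows from the counit axioms for $\phi$ applied to the definitions \eqref{eq;1.14}--\eqref{eq;1.15}), which is what underlies Schauenburg's general statement; so your use of the explicit $\omega$ is a convenience, not a necessity, and the lemma is not special to $A_{q}(\mathfrak{g})$.
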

\begin{proof} This is  a special case of Remark 6.2 in \cite{Sch}.
The reader also can prove it by using Formula $(\star)$ in Theorem \ref{t1}.
\end{proof}

Following Lemma \ref{l4.2}, we have no worry to write $h\varphi$  for $h\bowtie \varphi$  and  just denote by $h$ the element $h\bowtie \varepsilon$ for short.  Recall that we have already defined the elements $\flat_{i}$ and $H_i$ in (\ref{eq;1}). These elements will be used in the following propositions.

\begin{proposition}\label{p4.5} Assume $n\geq 4$. Then we have the following relations in $D(A_{q}(\mathfrak{g}))$:
 \begin{eqnarray}&&(1\bi \chi_{i})(1\bi \chi_j)=(1\bi \chi_j)(1\bi \chi_i),\;\;\;(\flat_i\Gamma^i)^{l_{i}}=0,\\
&& \sum_{r+s=1-a_{ij}}(-1)^{s}\left [
\begin{array}{c} 1-a_{ij}\\s
\end{array}\right]_{d_{i}}(\flat_i\Gamma^i)^{r} (\flat_j\Gamma^{j}) (\flat_i\Gamma^{i})^{s}=0,
\end{eqnarray}
for $1\leq i\neq j\leq m$ and $l_{i}=\ord(q^{c_{ii}})$.
\end{proposition}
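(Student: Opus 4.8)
The plan is to reduce each of the three relations to a relation already established in the Majid algebra $M_{q}(\mathfrak{g})$: the commutativity of the $\chi_{i}$ (Lemma \ref{l3.1}), the nilpotency $(\Gamma^{i})^{\stackrel{\rightharpoonup}{l_{i}}}=(\Gamma^{i})^{\stackrel{\leftharpoonup}{l_{i}}}=0$ (Proposition \ref{p3.3}), and the Serre relation (Proposition \ref{p3.4}). The bridge is the explicit multiplication formula $(\star)$ of Theorem \ref{t1}, fed with the elements $\gamma,\mathbf{f},\chi,\omega$ computed in Lemma \ref{14.2}. Since $\omega$ is supported on tensors of idempotents $\mathbf{1}_{a}\otimes\mathbf{1}_{b}\otimes\mathbf{1}_{c}\otimes S(\mathbf{1}_{d})\otimes S(\mathbf{1}_{e})$, each application of $(\star)$ to our elements only produces $\q$-power coefficients and replaces the functionals $\chi_{i},\Gamma^{i}$ by $\mathbf{1}_{a}$-twists of themselves; by the bimodule description of Lemma \ref{l3.2} and the product formula \eqref{eq;2}, such twists stay inside the span of the $\Gamma^{i}_{\chi_{a}}$. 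Throughout I would use Lemma \ref{l4.2} to write $\flat_{i}\Gamma^{i}=(\flat_{i}\bi\e)(1\bi\Gamma^{i})=\flat_{i}\bi\Gamma^{i}$, and \eqref{eq;4} to move elements of $A_{q}(\mathfrak{g})$ past the $1\bi\varphi$'s.

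For the first relation, observe that $\chi_{i}=(\mathbf{1}_{\epsilon_{i}})^{\ast}$ is supported on a single idempotent of $\k(\Z_{n})^{m}$, so $1\bi\chi_{i}$ lies in the subspace spanned by the $\mathbf{1}_{a}\bi(\mathbf{1}_{b})^{\ast}$. A direct application of $(\star)$ then shows that both $(1\bi\chi_{i})(1\bi\chi_{j})$ and $(1\bi\chi_{j})(1\bi\chi_{i})$ equal the same $\q$-power times $1\bi(\chi_{i}\chi_{j})$: the support constraints pin down the indices in $\omega$, and the resulting exponent is symmetric in the two slots that survive. This is a short computation once $\omega$ is in hand.

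The main work is the second and third relations, and the role of the correction factor $\flat_{i}$ is precisely to absorb the extra roots of unity produced by the non-coassociativity of $\Delta_{J}$ (equivalently, by the $\q$-powers of $\omega$). First I would prove by induction on $l$ that
\[
(\flat_{i}\bi\Gamma^{i})^{l}=c_{l}\,\bigl(w_{l}\bi(\Gamma^{i})^{\stackrel{\rightharpoonup}{l}}\bigr)
\]
for an explicit element $w_{l}\in A_{q}(\mathfrak{g})$ and an explicit scalar $c_{l}$; one step of the induction is an application of $(\star)$ with $g,h$ the relevant powers of $\flat_{i}$, after which the $\mathbf{1}_{a}$-twists coming from $\omega$ are rewritten through Lemma \ref{l3.2} and the product assembled by \eqref{eq;2}, the $\flat_{i}$ being chosen so that no obstruction survives. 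Since $n\geq4$, Proposition \ref{p3.3} gives $(\Gamma^{i})^{\stackrel{\rightharpoonup}{l_{i}}}=0$, hence $(\flat_{i}\Gamma^{i})^{l_{i}}=0$. For the Serre relation the same reduction applied to $(\flat_{i}\bi\Gamma^{i})^{r}(\flat_{j}\bi\Gamma^{j})(\flat_{i}\bi\Gamma^{i})^{s}$ yields $c\,\bigl(w\bi\bigl((\Gamma^{i})^{r}\cdot\Gamma^{j}\cdot(\Gamma^{i})^{s}\bigr)\bigr)$ with the $M_{q}(\mathfrak{g})$-product unambiguously bracketed because $\Phi(\chi_{i}^{r},\chi_{j},\chi_{i}^{s})=1$ for $r+s=1-a_{ij}$ and $n\geq4$, exactly as in the proof of Proposition \ref{p3.4}. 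The scalar $c$ and the element $w$ turn out to be independent of the splitting $r+s=1-a_{ij}$ (the total group-weight $(1-a_{ij})\epsilon_{i}+\epsilon_{j}$ being independent of the split), so applying the linear operator $x\mapsto c\,(w\bi\e)(1\bi x)$ to the identity of Proposition \ref{p3.4} produces the desired relation in $D(A_{q}(\mathfrak{g}))$.

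The hard part is the bookkeeping inside the induction: one must verify that the composite of the reassociator $\Phi$ (which enters whenever products of the $\Gamma^{i}$ are re-bracketed in $M_{q}(\mathfrak{g})$), the $\q$-power coefficients of $\omega$ from Lemma \ref{14.2}, and the coefficients of $\flat_{i}$, telescopes to exactly the scalars $l!_{q^{c_{ii}}}$ and the $q$-binomials appearing in Propositions \ref{p3.3} and \ref{p3.4}. This is the same flavour of floor-function arithmetic used to prove Lemma \ref{14.2}, leaning on Lemma \ref{l4.1} and the identity $[\frac{n-1+z}{n}]+[\frac{n-z}{n}]=1$ for $0\leq z<n$. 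The hypothesis $n\geq4$ is used exactly where it is used in Section 3: to identify $(\Gamma^{i})^{\stackrel{\rightharpoonup}{r}}$ with $(\Gamma^{i})^{\stackrel{\leftharpoonup}{r}}$ and to make the mixed products $(\Gamma^{i})^{r}\cdot\Gamma^{j}\cdot(\Gamma^{i})^{s}$ well defined.
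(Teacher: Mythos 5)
Your proposal follows essentially the same route as the paper: reduce all three relations to the corresponding relations in $M_{q}(\mathfrak{g})$ (Lemma \ref{l3.1}, Proposition \ref{p3.3}, Proposition \ref{p3.4}) via the multiplication formula $(\star)$ fed with the $\omega$ of Lemma \ref{14.2}, with the $\flat_{i}$ factors absorbing the residual roots of unity and the resulting scalar for the Serre products depending only on $r+s$. One harmless caveat: since $(\star)$ reverses the order of the functionals, the reduction actually lands on $(\Gamma^{i})^{s}\cdot\Gamma^{j}\cdot(\Gamma^{i})^{r}$ rather than $(\Gamma^{i})^{r}\cdot\Gamma^{j}\cdot(\Gamma^{i})^{s}$, but the Serre sum changes only by the overall sign $(-1)^{1-a_{ij}}$ under $r\leftrightarrow s$, so the conclusion is unaffected.
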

\begin{proof} For the first part of (4.5), we have:
\begin{eqnarray*}
(1\bi \chi_{i})(1\bi \chi_j)&=&\omega^{(3)}\bi(\omega^{(5)}\rightharpoonup \chi_j\leftharpoonup \omega^{(1)})(\omega^{(4)}\rightharpoonup \chi_i\leftharpoonup \omega^{(2)})\\
&=& \sum_{a=\epsilon_{j},b=\epsilon_{i},c\in (\mathbbm{Z}_{n})^{m},d=(n-1)\epsilon_{i},e=(n-1)\epsilon_{j}}\prod_{s\neq j,t}\q^{c_{st}0}
\\
&&\prod_{s= j,t\neq i}\q^{c_{jt}0}\q^{c_{ji}(-[\frac{n+c_{i}}{n}]-(n-1))}\mathbf{1}_{c}\bi (\chi_i\cdot \chi_j)\\
&=&1\bi (\chi_i\cdot\chi_j).
\end{eqnarray*}
Similarly, one can show that $(1\bi \chi_{j})(1\bi \chi_i)=1\bi (\chi_j\cdot\chi_i)$ and so $(1\bi \chi_{i})(1\bi \chi_j)=(1\bi \chi_{j})(1\bi \chi_i)$.
Applying the proof of Formula (3.8) in \cite[Prop. 3.4]{Liu}, we get $(1\bi \Gamma^{i})^{l_{i}}=0$. By (4.8) of the next Proposition \ref{p4.6}, we have:
$$\flat_{i}\Gamma^{i}\mathbf{1}_{a}=\mathbf{1}_{a-\epsilon_{i}}\flat_{i}\Gamma^{i}\;\;\;\;(\textrm{and so}, \;\;\Gamma^{i}\mathbf{1}_{a}=\mathbf{1}_{a-\epsilon_{i}}\Gamma^{i}).$$
Therefore, $(\flat_{i}\Gamma^{i})^{l_{i}}=0$.

Finally, we show the Serre relation.
\begin{eqnarray*}
(\flat_{i}\Gamma^{i})(\flat_{j}\Gamma^{j})&=&\flat_{i}\sum_{a\in(\mathbbm{Z}_{n})^{m}}\prod_{l=1}^{m}q^{-c_{jl}a_{l}}\mathbf{1}_{a-\epsilon_{i}}
(1\bi \Gamma^i)(1\bi \Gamma^j)\\
&=&\flat_{i}\sum_{a\in(\mathbbm{Z}_{n})^{m}}\prod_{l\neq i}q^{-c_{jl}a_{l}}q^{-c_{ji}(1+a_{i})'}\mathbf{1}_{a}
\\
&&\omega^{(3)}\bi(\omega^{(5)}\rightharpoonup \Gamma^j\leftharpoonup \omega^{(1)})(\omega^{(4)}\rightharpoonup \Gamma^i\leftharpoonup \omega^{(2)})\\
&=&\flat_{i}\sum_{a\in(\mathbbm{Z}_{n})^{m}}\prod_{l\neq i}q^{-c_{jl}a_{l}}q^{-c_{ji}(1+a_{i})'}\mathbf{1}_{a}\\
&&\sum_{a=\epsilon_{j},b=\epsilon_{i},c\in (\mathbbm{Z}_{n})^{m},d=e=0\epsilon_{i}}\q^{-c_{ji}[\frac{1+c_{i}}{n}]}\mathbf{1}_{c}\bi (\Gamma^j\cdot \Gamma^i)\\
&=&\flat_{i}\sum_{a\in(\mathbbm{Z}_{n})^{m}}\prod_{l\neq i}q^{-c_{jl}a_{l}}q^{-c_{ji}(1+a_{i})}\mathbf{1}_{a}\bi (\Gamma^j\cdot \Gamma^i)\\
&=&q^{-c_{ji}}\flat_{i}\flat_{j}(\Gamma^j\cdot \Gamma^i).
\end{eqnarray*}
Similarly, we have $(\flat_{j}\Gamma^{j})(\flat_{i}\Gamma^{i})=q^{-c_{ij}}\flat_{i}\flat_{j}(\Gamma^i\cdot \Gamma^j)$,
and
\begin{eqnarray*}
(\flat_{i}\Gamma^{i})^{2}(\flat_{j}\Gamma^{j})&=&\flat_{i}\sum_{a\in(\mathbbm{Z}_{n})^{m}}\prod_{l\neq i}q^{-c_{il}a_{l}}q^{-c_{ii}(1+a_{i})'}\mathbf{1}_{a}\sum_{b\in(\mathbbm{Z}_{n})^{m}}\prod_{l\neq i}q^{-c_{jl}b_{l}}q^{-c_{ji}(2+b_{i})'}\mathbf{1}_{b}\\
&&(1\bi \Gamma^i)^2(1\bi \Gamma^j)
\\
&=&\flat_{i}\sum_{a\in(\mathbbm{Z}_{n})^{m}}\prod_{l\neq i}q^{-c_{il}a_{l}}q^{-c_{ii}(1+a_{i})'}\mathbf{1}_{a}\sum_{b\in(\mathbbm{Z}_{n})^{m}}\prod_{l\neq i}q^{-c_{jl}b_{l}}q^{-c_{ji}(2+b_{i})'}\mathbf{1}_{b}\\
&& \sum_{c\in(\mathbbm{Z}_{n})^{m}}\q^{-c_{ii}[\frac{1+c_{i}}{n}]-c_{ji}[\frac{2+c_{i}}{n}]}\mathbf{1}_{c}\bi \Gamma^j\cdot(\Gamma^i\cdot\Gamma^i)\\
&=&q^{-2c_{ji}-c_{ii}}\flat_{i}^{2}\flat_{j} \Gamma^j\cdot(\Gamma^i\cdot\Gamma^i).
\end{eqnarray*}
In a similar way, we obtain the following identities:
$$(\flat_{i}\Gamma^{i})(\flat_{j}\Gamma^{j})(\flat_{i}\Gamma^{i})=q^{-c_{ii}-c_{ij}-c_{ji}}\flat_{i}^{2}\flat_{j} \Gamma^i\cdot \Gamma^j\cdot\Gamma^i,$$
$$(\flat_{i}\Gamma^{j})(\flat_{i}\Gamma^{i})^{2}=q^{-c_{ii}-2c_{ij}}\flat_{i}^{2}\flat_{j} \Gamma^i\cdot \Gamma^i\cdot\Gamma^j.$$
In general, we have the following identity:
$$(\flat_{i}\Gamma^{i})^{r}(\flat_{j}\Gamma^{j})(\flat_{i}\Gamma^{i})^{s}=q^{-(r+s)c_{ji}-\frac{(r+s)(r+s-1)}{2}c_{ii}}
\flat_{i}^{r+s}\flat_{j}(\Gamma^i)^{s}\cdot \Gamma^j\cdot(\Gamma^i)^{r}$$
for any two natural numbers $r,s$ satisfying $r+s=1-a_{ij}$.
Therefore, by Proposition \ref{p3.4}, we have:
\begin{eqnarray*}
&&\sum_{r+s=1-a_{ij}}(-1)^{s}\left [
\begin{array}{c} 1-a_{ij}\\s
\end{array}\right]_{d_{i}}((\flat_i\Gamma^i)^{r} (\flat_j\Gamma^{j}) (\flat_i\Gamma^{i})^{s}\\
&&=q^{-(r+s)c_{ji}-\frac{(r+s)(r+s-1)}{2}c_{ii}}
\flat_{i}^{r+s}\flat_{j}\sum_{r+s=1-a_{ij}}(-1)^{s}\left [
\begin{array}{c} 1-a_{ij}\\s
\end{array}\right]_{d_{i}}(\Gamma^i)^{s}\cdot \Gamma^j\cdot(\Gamma^i)^{r}\\
&&=0.
\end{eqnarray*}

\end{proof}

\begin{proposition}\label{p4.6}  In $D(A_{q}(\mathfrak{g}))$, we have the following relations:
\begin{eqnarray}&&(1\bi \chi_{i})h_{j}=h_j(1\bi \chi_{i}),\;\;\;\;(\flat_{i}\chi_{i})^{n}=H_{i}^{-2},\\
&&h_{i}(\flat_j\Gamma^{j})h_{i}^{-1}=\q^{-\delta_{ij}}\flat_j\Gamma^{j},\\
&&(\flat_{i}\chi_{i})e_{j}(\flat_{i}\chi_{i})^{-1}=\q^{c_{ji}}q^{-2c_{ji}}e_{j},\\
&&(\flat_{i}\chi_{i})(\flat_{j}\Gamma^{j})(\flat_{i}\chi_{i})^{-1}=\q^{-c_{ji}}q^{2c_{ji}}(\flat_{j}\Gamma^{j}),\\
&& (\flat_{j}\Gamma^{j})e_{i}-q^{-c_{ji}}e_{i}(\flat_{j}\Gamma^{j})=\delta_{ij}(1\bi \e-H_{i}^{-1}\flat_{i}\chi_i).
\end{eqnarray}
\end{proposition}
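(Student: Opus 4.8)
The plan is to verify the five identities one at a time by explicit computation in $D(A_{q}(\mathfrak{g}))$, using the multiplication rule $(\star)$ of Theorem~\ref{t1}, its special case~\eqref{eq;4}, the explicit formulas for $\Delta_{J},\alpha,S$ from Lemma~\ref{l2}, the closed form of $\omega$ from Lemma~\ref{14.2}, and the floor identity of Lemma~\ref{l4.1}; throughout we identify $\flat_{i}\chi_{i}=\flat_{i}\bowtie\chi_{i}$ and $\flat_{j}\Gamma^{j}=\flat_{j}\bowtie\Gamma^{j}$ by Lemma~\ref{l4.2}. It is convenient to record at the outset the elementary facts $h_{i}e_{j}h_{i}^{-1}=\q^{\delta_{ij}}e_{j}$, $h_{i}\flat_{j}h_{i}^{-1}=\flat_{j}$, $\flat_{i}^{n}=H_{i}^{-1}$, $S(\mathbf{1}_{a})=\mathbf{1}_{-a}$, $e_{i}\flat_{j}=q^{c_{ji}}\flat_{j}\big[1-(1-\q^{-c_{ji}})\mathbf{1}^{i}_{0}\big]e_{i}$, and that $\chi_{i}=(\mathbf{1}_{\epsilon_{i}})^{\ast}$ and $\Gamma^{j}=(\mathbf{1}_{\epsilon_{j}}e_{j})^{\ast}$ vanish on every other basis vector of $A_{q}(\mathfrak{g})$. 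The computations run parallel to the $\mathfrak{sl}_{2}$ case of~\cite[Prop.~3.4]{Liu}; the only new feature is the multi-index bookkeeping coming from the symmetrized Cartan matrix $(c_{ij})$, which is handled everywhere by Lemma~\ref{l4.1}.

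The relations $(1\bowtie\chi_{i})h_{j}=h_{j}(1\bowtie\chi_{i})$ and $h_{i}(\flat_{j}\Gamma^{j})h_{i}^{-1}=\q^{-\delta_{ij}}\flat_{j}\Gamma^{j}$ follow at once from~\eqref{eq;4}: since $h_{j}$ and $h_{i}$ are group-like their relevant iterated coproducts are $h_{j}^{\otimes 3}$ and $h_{i}^{\otimes 3}$, so the products collapse to $h_{j}\bowtie(h_{j}^{-1}\rightharpoonup\chi_{i}\leftharpoonup h_{j})$ and $h_{i}\bowtie(h_{i}^{-1}\rightharpoonup\Gamma^{j}\leftharpoonup h_{i})$, and evaluating these functionals gives $h_{j}^{-1}\rightharpoonup\chi_{i}\leftharpoonup h_{j}=\chi_{i}$ and $h_{i}^{-1}\rightharpoonup\Gamma^{j}\leftharpoonup h_{i}=\q^{\delta_{ij}}\Gamma^{j}$; multiplying the latter by $\flat_{j}$ (which commutes with $h_{i}$) finishes the second. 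For the power relation $(\flat_{i}\chi_{i})^{n}=H_{i}^{-2}$ I would first check by~\eqref{eq;4} that $1\bowtie\chi_{i}$ commutes with $\flat_{i}\bowtie\varepsilon$, and then prove by induction on $k$, using $(\star)$ and the explicit $\omega$, that $(1\bowtie\chi_{i})^{k}=H_{i}^{k-1}(1\bowtie\chi_{i}^{k})$; taking $k=n$ and using $\chi_{i}^{n}=\varepsilon$, $H_{i}^{n}=1$ gives $(1\bowtie\chi_{i})^{n}=H_{i}^{-1}$, which together with $\flat_{i}^{n}=H_{i}^{-1}$ yields $(\flat_{i}\chi_{i})^{n}=\flat_{i}^{n}(1\bowtie\chi_{i})^{n}=H_{i}^{-2}$.

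For the quasi-commutations $(\flat_{i}\chi_{i})e_{j}(\flat_{i}\chi_{i})^{-1}=\q^{c_{ji}}q^{-2c_{ji}}e_{j}$ and $(\flat_{i}\chi_{i})(\flat_{j}\Gamma^{j})(\flat_{i}\chi_{i})^{-1}=\q^{-c_{ji}}q^{2c_{ji}}(\flat_{j}\Gamma^{j})$, I would use that $1\bowtie\chi_{i}$ commutes with $\flat_{i}\bowtie\varepsilon$ to split the conjugation as $\flat_{i}\big[(1\bowtie\chi_{i})(\,\cdot\,)(1\bowtie\chi_{i})^{-1}\big]\flat_{i}^{-1}$, then compute $(1\bowtie\chi_{i})(e_{j}\bowtie\varepsilon)$ by~\eqref{eq;4} with the three-term coproduct $\Delta_{J}(e_{j})=e_{j}\otimes\flat_{j}^{-1}+1\otimes\sum_{k=1}^{n-1}\mathbf{1}^{j}_{k}e_{j}+H_{j}^{-1}\otimes\mathbf{1}^{j}_{0}e_{j}$ of Lemma~\ref{l2} (and, for $\Gamma^{j}$, the coproduct of Lemma~\ref{l3.1}). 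Because $\chi_{i}$ is concentrated in degree $0$ on $\mathbf{1}_{\epsilon_{i}}$, only two summands survive the pairing, yielding $(1\bowtie\chi_{i})e_{j}(1\bowtie\chi_{i})^{-1}=q^{-c_{ji}}\big[\q^{c_{ji}}-(\q^{c_{ji}}-1)\mathbf{1}^{j}_{0}\big]e_{j}$; conjugating this further by $\flat_{i}$ and using $\flat_{i}e_{j}\flat_{i}^{-1}=q^{-c_{ji}}\big[e_{j}+(\q^{c_{ji}}-1)\mathbf{1}^{j}_{0}e_{j}\big]$, the two $\mathbf{1}^{j}_{0}$-corrections cancel and leave the scalar $\q^{c_{ji}}q^{-2c_{ji}}$; the $\Gamma^{j}$-case is entirely analogous.

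The main obstacle is the commutator relation $(\flat_{j}\Gamma^{j})e_{i}-q^{-c_{ji}}e_{i}(\flat_{j}\Gamma^{j})=\delta_{ij}(1\bowtie\varepsilon-H_{i}^{-1}\flat_{i}\chi_{i})$. Here one side is immediate, $e_{i}(\flat_{j}\Gamma^{j})=e_{i}\flat_{j}\bowtie\Gamma^{j}$ by Lemma~\ref{l4.2}; for the other side $(\flat_{j}\Gamma^{j})e_{i}=\flat_{j}\big[(1\bowtie\Gamma^{j})(e_{i}\bowtie\varepsilon)\big]$ I would feed into~\eqref{eq;4} the full two-iterated coproduct
\[
(\Delta_{J}\otimes\mathrm{id})\Delta_{J}(e_{i})=e_{i}\otimes\flat_{i}^{-1}\otimes\flat_{i}^{-1}+1\otimes w_{i}\otimes\flat_{i}^{-1}+H_{i}^{-1}\otimes v_{i}\otimes\flat_{i}^{-1}+1\otimes 1\otimes w_{i}+H_{i}^{-1}\otimes H_{i}^{-1}\otimes v_{i},
\]
with $w_{i}=\sum_{k=1}^{n-1}\mathbf{1}^{i}_{k}e_{i}$, $v_{i}=\mathbf{1}^{i}_{0}e_{i}$, and evaluate the functional $S((e_{i})_{(2)})\rightharpoonup\Gamma^{j}\leftharpoonup(e_{i})_{(1)(1)}$ on each of the five summands. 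The summands $1\otimes w_{i}\otimes\flat_{i}^{-1}$ and $H_{i}^{-1}\otimes v_{i}\otimes\flat_{i}^{-1}$ pair to give, after the factor $\flat_{j}$ and the rewriting of $e_{i}\flat_{j}$ recorded above, exactly $q^{-c_{ji}}e_{i}(\flat_{j}\Gamma^{j})$; for $i\ne j$ the other three summands pair to zero, which proves the relation. For $i=j$ the summand $e_{i}\otimes\flat_{i}^{-1}\otimes\flat_{i}^{-1}$ contributes $1\bowtie\varepsilon$, while $1\otimes1\otimes w_{i}$ and $H_{i}^{-1}\otimes H_{i}^{-1}\otimes v_{i}$ contribute the scalar $\nu_{\epsilon_{i}}$ times $\flat_{i}(1\bowtie\chi_{i})$, resp.\ the scalar $\q^{-c_{ii}}\rho_{\epsilon_{i}}$ times $\flat_{i}H_{i}^{-1}(1\bowtie\chi_{i})$, where $\nu_{\epsilon_{i}}$ and $\rho_{\epsilon_{i}}$ denote the coefficients of $\mathbf{1}_{\epsilon_{i}}e_{i}$ in $S(w_{i})$ and in $S(v_{i})$. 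Evaluating $S(e_{i})$ and $\alpha$ from Lemma~\ref{l2} gives $\nu_{\epsilon_{i}}=0$ and $\rho_{\epsilon_{i}}=-\q^{c_{ii}}$, so the $i=j$ output is $1\bowtie\varepsilon-\flat_{i}H_{i}^{-1}(1\bowtie\chi_{i})=1\bowtie\varepsilon-H_{i}^{-1}\flat_{i}\chi_{i}$, as required. Pinning down these two coefficients and the vanishing of the positive-$e$-degree terms is the technical heart; the $\mathfrak{sl}_{2}$ computation in~\cite[Prop.~3.4]{Liu} is the template, with every floor-function identity handled by Lemma~\ref{l4.1}.
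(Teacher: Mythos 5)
Your proposal is correct and follows essentially the same route as the paper's proof: \eqref{eq;4} for the $h_j$-commutations, induction via $(\star)$ and the explicit $\omega$ to get $(1\bowtie\chi_i)^{k}=H_i^{k-1}\chi_i^{k}$ and hence $(\flat_i\chi_i)^n=H_i^{-2}$, the three-term coproduct of $e_j$ for (4.9), and the five-term iterated coproduct of $e_i$ for the commutator (4.11), with the same surviving summands and the same coefficients (your $\nu_{\epsilon_i}=0$ and $\rho_{\epsilon_i}=-\q^{c_{ii}}$ reproduce exactly the paper's $1\bowtie\e$ and $-H_i^{-1}\flat_i\chi_i$ terms). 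The one place where ``entirely analogous'' undersells the work is (4.10): the product $(1\bowtie\chi_i)(1\bowtie\Gamma^j)$ has two functional factors, so \eqref{eq;4} does not apply there; one must invoke the full multiplication $(\star)$ with the $\omega$ of Lemma \ref{14.2} together with the products $\Gamma^j\cdot\chi_i$ and $\chi_i^{-1}\cdot(\Gamma^j\cdot\chi_i)$ in $M_q(\mathfrak{g})$ coming from the Majid-bimodule structure of Lemma \ref{l3.2} --- which is what the paper does, and which remains consistent with your stated toolkit, but should be spelled out rather than deferred to analogy.
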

\begin{proof} By Formula \eqref{eq;4}, we have $(1\bi \chi_{i})h_{j}=h_{j}\bi (h_{j}^{-1}\rightharpoonup \chi_i\leftharpoonup h_j)=
h_j\bi \chi_{i}=h_{j}(1\bi \chi_{i})$. To show the second equation in (4.7), we use the formula $(\star)$ in Theorem \ref{t1}. We obtain:
\begin{eqnarray*}
&&(1\bi \chi_i)(1\bi \chi_i)\\
&&=\omega^{(3)}\bi(\omega^{(5)}\rightharpoonup \chi_i\leftharpoonup \omega^{(1)})(\omega^{(4)}\rightharpoonup \chi_i\leftharpoonup \omega^{(2)})\\
&&=\sum_{a=b=\epsilon_{i},c\in (\mathbbm{Z}_{n})^{m},d=e=(n-1)\epsilon_{i}}\prod_{k,  j\neq i}\q^{c_{kj}0}\prod_{k\neq i,j=i}\q^{c_{ki}c_{k}[\frac{(n-1)'+(n-1)'}{n}]}\\
&&\;\;\;\;\;\;\;\;\;\;\;\;\;\;\;\;\q^{c_{ii}(-1+c_{i}[\frac{(n-1)'+(n-1)'}{n}]-(n-1))}\mathbf{1}_{c}\bi \chi_{i}^{2}\\
&&=\sum_{c\in (\mathbbm{Z}_{n})^{m}}\prod_{k=1}^{m}\q^{c_{ki}c_{k}}\mathbf{1}_{c}\bi \chi_{i}^{2}\\
&&=H_{i}\chi_{i}^{2}.
\end{eqnarray*}

Here $\chi_i^2$ is the product $\chi_i\cdot \chi_i$ in $M_q(\mathfrak{g})$.
Inductively, we have $(1\bi \chi_i)^{k}=H_{i}^{k-1}\chi_{i}^{k}$ for $1\leq k\leq n$. In particular,
$(1\bi \chi_i)^{n}=H_{i}^{-1}\chi_{i}^{n}=H_{i}^{-1}$. By the first part of (4.7), we obtain: $$(\flat_i\chi_i)^{n}=\flat_i^{n}\chi_i^n=H_{i}^{-1}H_{i}^{-1}=H_{i}^{-2}.$$

For (4.8), it is enough to show that $h_i\Gamma^{j}h_{i}^{-1}=\q^{-\delta_{ij}}(1\bi\Gamma^{j})$ because the elements $h_i$ and $\flat_j$ are commutative.
Indeed, using Formula \eqref{eq;4} we obtain: $$h_i\Gamma^{j}h_{i}^{-1}=h_{i}(h_{i}^{-1}\chi_{j}(h_{i}^{-1})\Gamma^{j})=\q^{-\delta_{ij}}(1\bi\Gamma^{j}).$$

For (4.9), we have the following:
\begin{eqnarray*}
&&(\flat_{i}\chi_{i})e_{j}(\flat_{i}\chi_{i})^{-1}\\
&&=\flat_{i} (e_{j})_{(1)(2)}\omega^{(3)}\bi(\omega^{(5)}\rightharpoonup \e \leftharpoonup \omega^{(1)})(\omega^{(4)}
S((e_j)_{(2)})\rightharpoonup \chi_i\leftharpoonup (e_j)_{(1)(1)}\omega^{(2)})(\flat_{i}\chi_{i})^{-1}\\
&&=\flat_{i}(\sum_{k=1}^{n-1}\mathbf{1}^{j}_{k}e_{j}\bi q^{c_{ji}(n-1)}\chi_i+\mathbf{1}^{j}_{0}e_{j}\bi \q^{-c_{ij}}q^{c_{ji}(n-1)}\chi_i)(\flat_{i}\chi_{i})^{-1}\\
&&=[q^{c_{ji}(n-1)}\sum_{a\in (\mathbbm{Z}_{n})^{m},a_{j}\neq 0}\prod_{l=1}^{m}q^{-c_{il}a_{l}}\mathbf{1}_{a}e_{j}\bi \chi_{i}+\\
&&\;\;\;\;\;\;\;\;\;\;\;\;\;\;\;\;\;\;\;\;\;q^{-c_{ji}}\sum_{a\in (\mathbbm{Z}_{n})^{m},a_{j}= 0}\prod_{l=1}^{m}q^{-c_{il}a_{l}}\mathbf{1}_{a}e_{j}\bi \chi_{i}](\flat_{i}\chi_{i})^{-1}.
\end{eqnarray*}
Now we need the following identities:
\begin{eqnarray*}
&&q^{c_{ji}(n-1)}\sum_{a\in (\mathbbm{Z}_{n})^{m},a_{j}\neq 0}\prod_{l=1}^{m}q^{-c_{il}a_{l}}\mathbf{1}_{a}e_{j}\\
&&=q^{c_{ji}(n-1)}\sum_{a\in (\mathbbm{Z}_{n})^{m},a_{j}\neq 0}\prod_{l=1}^{m}q^{-c_{il}a_{l}}e_{j}\mathbf{1}_{a-\epsilon_j}\\
&&=q^{c_{ji}(n-1)}q^{-c_{ij}}e_{j}\sum_{a\in (\mathbbm{Z}_{n})^{m},a_{j}\neq n-1}\prod_{l=1}^{m}q^{-c_{il}a_{l}}\mathbf{1}_{a},
\end{eqnarray*}
and:
\begin{eqnarray*}
&&q^{-c_{ji}}\sum_{a\in (\mathbbm{Z}_{n})^{m},a_{j}= 0}\prod_{l=1}^{m}q^{-c_{il}a_{l}}\mathbf{1}_{a}e_{j}\\
&&=q^{-c_{ji}}\sum_{a\in (\mathbbm{Z}_{n})^{m},a_{j}= 0}\prod_{l=1}^{m}q^{-c_{il}a_{l}}e_{j}\mathbf{1}_{a-\epsilon_j}\\
&&=q^{-c_{ji}}q^{c_{ij}(n-1)}e_{j}\sum_{a\in (\mathbbm{Z}_{n})^{m},a_{j}= n-1}\prod_{l=1}^{m}q^{-c_{il}a_{l}}\mathbf{1}_{a}.
\end{eqnarray*}
By applying the above identities to the expression of $(\flat_{i}\chi_{i})e_{j}(\flat_{i}\chi_{i})^{-1}$, we obtain:
\begin{eqnarray*}
&&(\flat_{i}\chi_{i})e_{j}(\flat_{i}\chi_{i})^{-1}\\
&&=q^{-c_{ji}}q^{c_{ij}(n-1)}e_{j}(\flat_i\bi \chi_i)(\flat_{i}\chi_{i})^{-1}\\
&&=\q^{c_{ji}}q^{-2c_{ji}}e_{j}.
\end{eqnarray*}
To show (4.10), we only need to verify that $(\flat_{i}\chi_{i})(1\bi\Gamma^{j})(\flat_{i}\chi_{i})^{-1}=\q^{-c_{ji}}q^{2c_{ji}}(1\bi\Gamma^{j})$
since the elements $\chi_i$ and $\flat_j$ are commutative. Note that $(\flat_{i}\chi_{i})^{-1}=H_{i}^{-1}\flat_{i}^{-1}\chi_{i}^{n-1}$. Thus,  we have:
\begin{eqnarray*}
&&(\flat_{i}\chi_{i})(1\bi\Gamma^{j})(\flat_{i}\chi_{i})^{-1}\\
&&=\flat_{i}[\omega^{(3)}\bi(\omega^{(5)}\rightharpoonup \Gamma^{j} \leftharpoonup \omega^{(1)})(\omega^{(4)}
\rightharpoonup \chi_i\leftharpoonup \omega^{(2)})](\flat_{i}\chi_{i})^{-1}\\
&&=(\flat_{i}\q^{-c_{ji}}\bi (\Gamma^{j}\cdot \chi_i))H_{i}^{-1}\flat_{i}^{-1}\chi_{i}^{n-1}\\
&&=\q^{-c_{ji}}\flat_{i}[(1\bi \Gamma^{j}\cdot \chi_i)H_{i}^{-1}\flat_{i}^{-1}\chi_{i}^{n-1}],
\end{eqnarray*}
and: \begin{eqnarray*}
&&(1\bi \Gamma^{j}\cdot \chi_i)(H_{i}^{-1}\flat_{i}^{-1}\bi \chi_{i}^{n-1})\\
&&=\sum_{a,b,c,d,e\in (\mathbbm{Z}_{n})^{m}}\omega_{a,b,c,d,e}\prod_{l=1}^{m}q^{c_{il}(b_l+c_l+d_l)'}H_{i}^{-1}\mathbf{1}_{c}\bi
(S(\mathbf{1}_{e})\rightharpoonup \chi_i^{n-1}\leftharpoonup \mathbf{1}_{a})\\
&&\;\;\;\;\;\;\;\;\;\;\;\;\;\;\;\;\;\;\;(S(H_{i}^{-1}\mathbf{1}_{d})\rightharpoonup \Gamma^{j}\cdot \chi_i\leftharpoonup H_{i}^{-1}\mathbf{1}_{b})\\
&&=\sum_{c\in (\mathbbm{Z}_{n})^{m},a=d=(n-1)\epsilon_{i},b=\epsilon_{i}+\epsilon_{j},e=\epsilon_{i}}\prod_{s\neq i\neq k}\q^{c_{sk}0}
\prod_{s=i\neq k}\q^{c_{ik}(-(n-1)[\frac{b_{k}+c_{k}}{n}])}\\
&&\prod_{s\neq i=k}\q^{c_{si}(b_{s}+c_{s})}\q^{c_{ii}(1+c_{i}-1)}\prod_{l\neq j}q^{c_{il}c_{l}}q^{c_{ij}(1+c_{j})'}\q^{-c_{ji}}H_{i}^{-1}\mathbf{1}_{c}\bi \chi_{i}^{-1}\cdot(\Gamma^{j}\cdot \chi_i)\\
&&=\sum_{c\in (\mathbbm{Z}_{n})^{m}}\q^{c_{ij}[\frac{1+c_{j}}{n}]}\prod_{l\neq j}q^{c_{il}c_{l}}q^{c_{ij}(1+c_{j})'}
\prod_{s=1}^{m}\q^{a_{si}c_{s}}H_{i}^{-1}\mathbf{1}_{c}\bi \chi_{i}^{-1}\cdot(\Gamma^{j}\cdot \chi_i)\\
&&=q^{c_{ij}}\flat^{-1}_{i}\bi \chi_{i}^{-1}\cdot(\Gamma^{j}\cdot \chi_i)\\
&&=q^{c_{ij}+c_{ji}} \flat^{-1}_{i}\bi \Gamma^j,
\end{eqnarray*}
where $\omega_{a,b,c,d,e}$ denotes the coefficient of $\mathbf{1}_{a}\otimes\mathbf{1}_{b}\otimes \mathbf{1}_{c}\otimes S(\mathbf{1}_{d})\otimes S(\mathbf{1}_{e})$ in $\omega$. For the third equality we used $\prod_{s=1}^{m}\q^{a_{si}c_{s}}H_{i}^{-1}\mathbf{1}_{c}=\mathbf{1}_{c}$ and Lemma \ref{l4.1}.  Therefore, $(\flat_{i}\chi_{i})(1\bi\Gamma^{j})(\flat_{i}\chi_{i})^{-1}=\q^{-c_{ji}}q^{2c_{ji}}(1\bi \Gamma^{j})$.

We arrive now at the proof of the last equality, (4.11). Using the comultiplication formula for $e_{i}$
given in Lemma \ref{l2}, we have:
\begin{eqnarray*}
(\Delta\otimes id)\Delta(e_{i})&=&e_{i}\otimes \flat^{-1}_i\otimes \flat^{-1}_i+1\otimes \sum_{k=1}^{n-1}\mathbf{1}^{i}_{k}e_{i}\otimes \flat^{-1}_i
+H_{i}^{-1}\otimes \mathbf{1}^{i}_{0}e_{i} \otimes \flat^{-1}_i\\
&&+1\otimes 1\otimes \sum_{k=1}^{n-1}\mathbf{1}^{i}_{k}e_{i}
+H_{i}^{-1}\otimes H_{i}^{-1}\otimes \mathbf{1}^{i}_{0}e_{i}.
\end{eqnarray*}
Substituting the above comultiplication of $e_i$ in the following equation:
$$(1\bi \Gamma^{j})(e_{i}\bi \e)=(e_{i})_{(1)(2)}\bi S((e_{i})_{(2)})\rightharpoonup \Gamma^{j}\leftharpoonup (e_{i})_{(1)(1)},$$
we obtain:
$$(1\bi \Gamma^{j})(e_{i}\bi \e)=\sum_{k=1}^{n-1}\mathbf{1}^{i}_{k}e_{i}\bi \Gamma^{j}+\q^{-c_{ji}}\mathbf{1}^{i}_{0}e_{i}\bi\Gamma^{j},\  \mathrm{for}\ i\not=j.$$
Now multiplying both sides of the above identity with the element $\flat_j$, we obtain:
\begin{eqnarray*}
(\flat_j \Gamma^{j})e_{i}&=&e_{i}\sum_{a\in (Z)_{n}^{m},a_{i}\neq 0}\prod_{l=1}^{m}q^{-c_{jl}a_{l}}\mathbf{1}_{a-\epsilon_{i}}\Gamma^{j}
+\q^{-c_{ji}}e_{i}\sum_{a\in (Z)_{n}^{m},a_{i}= 0}\prod_{l=1}^{m}q^{-c_{jl}a_{l}}\mathbf{1}_{a-\epsilon_{i}}\Gamma^{j}\\
&=&q^{-c_{ji}}e_{i}\sum_{a\in (Z)_{n}^{m},a_{i}\neq n-1}\prod_{l=1}^{m}q^{-c_{jl}a_{l}}\mathbf{1}_{a}\Gamma^{j}+
q^{-c_{ji}}e_{i}\sum_{a\in (Z)_{n}^{m},a_{i}= n-1}\prod_{l=1}^{m}q^{-c_{jl}a_{l}}\mathbf{1}_{a}\Gamma^{j}\\
&=& q^{-c_{ji}}e_{i}(\flat_{j}\Gamma^j).
\end{eqnarray*}
If $i=j$, then we have:
$$(1\bi \Gamma^{i})(e_{i}\bi \e)=\flat_{i}^{-1}+
\sum_{k=1}^{n-1}\mathbf{1}^{i}_{k}e_{i}\bi\Gamma^{i}+\q^{-c_{ii}}\mathbf{1}^{i}_{0}e_{i}\bi\Gamma^{i}-H_{i}^{-1}\bi\chi_i.$$
Similarly, by multiplying both sides with the element $\flat_i$, we obtain:
$$(\flat_i \Gamma^{i})e_{i}=1\bi\e+q^{-c_{ii}}e_{i}(\flat_{i}\Gamma^i)-H_{i}^{-1}(\flat_i\chi_i).$$
\end{proof}

The next step is to determine the coalgebraic structure of $D(A_q(\mathfrak{g}))$. We divide it in the following two propositions.
\begin{proposition}\label{p4.7} In $D(A_{q}(\mathfrak{g}))$, we have:
     \begin{eqnarray} && \D(h_{i})=h_{i}\otimes h_{i},\;\;\Delta(e_{i})=e_{i}\otimes \flat_{i}^{-1}+1\otimes \sum_{j=1}^{n-1}\mathbf{1}^{i}_{j}e_{i}+H_{i}^{-1}\otimes \mathbf{1}^{i}_{0}e_{i},\\
      &&\e(h_{i})=1,\;\;\;\;\;\;\;\e(e_{i})=0,
      \end{eqnarray}
      for $1\leq i\leq m$.
\end{proposition}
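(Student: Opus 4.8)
The plan is to read the statement off from the fact, built into Theorem \ref{t1}, that $A_q(\mathfrak{g})$ sits inside $D(A_q(\mathfrak{g}))$ as a quasi-Hopf \emph{subalgebra} via $h\mapsto h\bi\e$. Since this embedding is a morphism of quasi-Hopf algebras, it commutes with the comultiplications and the counits, so for every $h\in A_q(\mathfrak{g})$ one has $\D_D(h\bi\e)=(h_{(1)}\bi\e)\otimes(h_{(2)}\bi\e)$ and $\e_D(h\bi\e)=\e(h)$, where on the right $\D=\D_J$ and $\e$ are the comultiplication and counit of $A_q(\mathfrak{g})$. By Lemma \ref{l4.2} we suppress the embedding from the notation and write $h$ for $h\bi\e$; note that every element occurring on the right-hand sides of the identities to be proved --- namely $e_i$, $1$, $H_i^{-1}$, $\flat_i^{-1}$ and the idempotents $\mathbf{1}_j^i$ --- already lies in $A_q(\mathfrak{g})$, so those identities are to be understood verbatim inside $D(A_q(\mathfrak{g}))^{\otimes 2}$ and $D(A_q(\mathfrak{g}))$.

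Granting this reduction, the two comultiplication formulas are exactly the expressions for $\D_J(h_i)$ and $\D_J(e_i)$ recorded in Lemma \ref{l2}, namely $\D_J(h_i)=h_i\otimes h_i$ and $\D_J(e_i)=e_i\otimes\flat_i^{-1}+1\otimes\sum_{j=1}^{n-1}\mathbf{1}_j^i e_i+H_i^{-1}\otimes\mathbf{1}_0^i e_i$. For the counit, recall that $A_q(\mathfrak{g})=H_J$ is a Drinfeld twist of the Hopf algebra $H$, inside which $h_i=g_i^n$ is group-like and $e_i$ is skew-primitive; since a twist does not change the counit, the counit of $A_q(\mathfrak{g})$ still satisfies $\e(h_i)=1$ and $\e(e_i)=0$, and by the previous paragraph these are the values of $\e_D$ on $h_i$ and $e_i$.

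I do not expect any real obstacle: the whole content of the proposition is that the coalgebra structure of $D(A_q(\mathfrak{g}))$ restricts, along the canonical embedding, to that of $A_q(\mathfrak{g})$, which is precisely what the phrasing of Theorem \ref{t1} asserts. The only thing requiring a moment's care is the trivial bookkeeping of identifying $\flat_i^{-1}$, $H_i^{-1}$, $\mathbf{1}_j^i$ with their images under $h\mapsto h\bi\e$; the compatibility of the embedding with the reassociator, $\phi_D=\phi\bi\e$ from Theorem \ref{t1}(ii), is what ensures that no correction terms are needed, so one never has to unravel the double's comultiplication formula $(\star\star)$.
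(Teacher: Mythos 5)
Your proposal is correct and is essentially the paper's own argument: the paper proves Proposition \ref{p4.7} by the one-line observation that $A_q(\mathfrak{g})$ is a quasi-Hopf subalgebra of $D(A_q(\mathfrak{g}))$ via $h\mapsto h\bi\e$, so the stated formulas are just those of Lemma \ref{l2} transported along the embedding. Your additional remarks on the counit and on identifying $\flat_i^{-1}$, $H_i^{-1}$, $\mathbf{1}^i_j$ with their images merely make explicit what the paper leaves implicit.
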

\begin{proof} Due to the fact that
$A_q(\mathfrak{g})$ is a quasi-Hopf subalgebra of $D(A_{q}(\mathfrak{g}))$.
\end{proof}

\begin{proposition}\label{p4.8} In $D(A_{q}(\mathfrak{g}))$, we have:
\begin{eqnarray} && \D(\flat_i\chi_i)=\flat_i\chi_i\otimes \flat_i\chi_i,\;\;\\
&&\Delta(\flat_i\Gamma^{i})=\flat_i\Gamma^{i}\otimes \flat_{i}+H_{i}^{-1}(\flat_i\chi_i)\otimes (\flat_i\Gamma^{i})\sum_{j=1}^{n-1}\mathbf{1}^{i}_{j}+(\flat_i\chi_i)\otimes (\flat_i\Gamma^{i})\mathbf{1}^{i}_{0},\\
      &&\e(\flat_i\chi_i)=1,\;\;\;\;\;\;\;\e(\flat_i\Gamma^{i})=0,
      \end{eqnarray}
      for $1\leq i\leq m$.
\end{proposition}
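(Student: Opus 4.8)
Since $D(A_q(\mathfrak g))$ is a quasi-Hopf algebra its comultiplication $\D$ is an algebra morphism, and since $A_q(\mathfrak g)$ is a quasi-Hopf subalgebra (Theorem \ref{t1}), $\D$ restricted to $A_q(\mathfrak g)$ is the comultiplication already recorded in Proposition \ref{p4.7}; in particular $\flat_i\in\k(\Z_n)^m\subseteq A_q(\mathfrak g)$, so $\D(\flat_i)$ is explicitly known. By Lemma \ref{l4.2} we have $\flat_i\chi_i=(\flat_i\bi\e)(1\bi\chi_i)$ and $\flat_i\Gamma^i=(\flat_i\bi\e)(1\bi\Gamma^i)$, whence
\[
\D(\flat_i\chi_i)=\D(\flat_i)\,\D(1\bi\chi_i),\qquad \D(\flat_i\Gamma^i)=\D(\flat_i)\,\D(1\bi\Gamma^i),
\]
and the whole statement reduces to computing $\D(1\bi\chi_i)$ and $\D(1\bi\Gamma^i)$. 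The counit assertions are of the same nature: $\e(\flat_i)=1$ and it remains to evaluate $\e$ on $1\bi\chi_i$ and $1\bi\Gamma^i$.

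The only available tool for the coproduct of elements coming from $M_q(\mathfrak g)$ is formula $(\star\star)$ of Theorem \ref{t1}, which is stated for $\mathbf T(\psi)$. So the first step is to evaluate the map $\mathbf T$ of \eqref{eq;1.13} on $\chi_i$ and on $\Gamma^i$: because the reassociator $\phi$ of $A_q(\mathfrak g)$ and the element $\alpha$ of Lemma \ref{l2} are both supported on the idempotents $\mathbf{1}_a$, and $\chi_i=(\mathbf{1}_{\epsilon_i})^{\ast}$, $\Gamma^i=(\mathbf{1}_{\epsilon_i}e_i)^{\ast}$, a direct evaluation of \eqref{eq;1.13} shows that $\mathbf T(\chi_i)$ and $\mathbf T(\Gamma^i)$ coincide with $1\bi\chi_i$ and $1\bi\Gamma^i$ up to an explicit scalar and left multiplication by an explicit element of $\k G\subseteq A_q(\mathfrak g)$ (a monomial in the $h_j$ and the $\flat_j$). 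This is the exact analogue of the corresponding computation in \cite[Prop. 3.4]{Liu}. Hence it suffices to compute $\D(\mathbf T(\chi_i))$ and $\D(\mathbf T(\Gamma^i))$ from $(\star\star)$, and similarly $\e_D(\mathbf T(\chi_i))$, $\e_D(\mathbf T(\Gamma^i))$ from the formula $\e_D(\mathbf T(\psi))=\psi(\phi^{(1)}S(\phi^{(2)})\alpha\phi^{(3)})$ of Theorem \ref{t1}(ii).

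Feeding $(\star\star)$ with the explicit $\phi$ and using the $M_q(\mathfrak g)$-coproducts $\D(\chi_i)=\chi_i\otimes\chi_i$ and $\D(\Gamma^i)=\chi_i\otimes\Gamma^i+\Gamma^i\otimes 1$ of Lemma \ref{l3.1}, all the reassociator factors $\tilde\phi^{(2)}(\cdots)\phi^{(-1)}\phi^{(1)}$ and $\tilde\phi^{(3)}\phi^{(-3)}(\cdots)\phi^{(2)}$ stay diagonal in the $\mathbf{1}_a$, and one collapses the accumulated floor exponents using Lemma \ref{l4.1} together with the identity $[\frac{n-1+z}{n}]+[\frac{n-z}{n}]=1$ (exactly as in the computation of $\omega$ above). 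For $\psi=\chi_i$ this yields $\D(\mathbf T(\chi_i))=\mathbf T(\chi_i)\otimes\mathbf T(\chi_i)$ up to the normalizing scalar; multiplying by $\D(\flat_i)$ gives $\D(\flat_i\chi_i)=\flat_i\chi_i\otimes\flat_i\chi_i$, so $\flat_i\chi_i$ is group-like. For $\psi=\Gamma^i$, the two summands of $\D(\Gamma^i)$ together with the action $\Gamma^i\leftharpoonup\mathbf{1}_a$ — which, since $\Gamma^i=(\mathbf{1}_{\epsilon_i}e_i)^{\ast}$, separates the case $\equiv 0\pmod n$ from the rest — produce precisely the three terms $\flat_i\Gamma^i\otimes\flat_i$, $H_i^{-1}(\flat_i\chi_i)\otimes(\flat_i\Gamma^i)\sum_{j=1}^{n-1}\mathbf{1}^{i}_{j}$ and $(\flat_i\chi_i)\otimes(\flat_i\Gamma^i)\mathbf{1}^{i}_{0}$; the structural parallel with the formula for $\D(e_i)$ in Proposition \ref{p4.7} simply reflects the duality $\Gamma^i\leftrightarrow e_i$. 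The two counit statements come out of $\e_D(\mathbf T(\psi))=\psi(\phi^{(1)}S(\phi^{(2)})\alpha\phi^{(3)})$ specialized at $\psi=\chi_i,\Gamma^i$ and the conversion of $\mathbf T$ to $1\bi\psi$.

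The main obstacle is this last computation: faithfully tracking the multi-index, floor-function exponents produced by $(\star\star)$ for $A_q(\mathfrak g)$ and verifying that the leftover reassociator discrepancies cancel so that the expression closes up into the stated finite form — it is exactly this cancellation that forces the normalizing factor $\flat_i$ and the truncations $\mathbf{1}^{i}_{0}$, $\sum_{j\ge 1}\mathbf{1}^{i}_{j}$. As a safeguard I would cross-check the resulting formulas against the relations of Propositions \ref{p4.5} and \ref{p4.6} (for instance, apply $\D$ to $(\flat_i\chi_i)(\flat_j\Gamma^j)(\flat_i\chi_i)^{-1}=\q^{-c_{ji}}q^{2c_{ji}}(\flat_j\Gamma^j)$ and to $(\flat_i\Gamma^i)e_i-q^{-c_{ii}}e_i(\flat_i\Gamma^i)=1\bi\e-H_i^{-1}\flat_i\chi_i$) and against the coassociativity and counit axioms of $D(A_q(\mathfrak g))$.
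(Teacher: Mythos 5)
Your proposal follows essentially the same route as the paper: compute $\mathbf{T}(\chi_i)$ and $\mathbf{T}(\Gamma^i)$ from \eqref{eq;1.13} (the paper finds $\mathbf{T}(\chi_i)=H_i^{-1}\chi_i$ and $\mathbf{T}(\Gamma^i)=1\bi\Gamma^i$), feed them into formula $(\star\star)$, collapse the floor-function exponents via Lemma \ref{l4.1}, and then multiply by $\Delta(\flat_i)$ to obtain the group-likeness of $\flat_i\chi_i$ and the three-term coproduct of $\flat_i\Gamma^i$. The plan is correct and matches the paper's argument in both structure and the key technical steps.
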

\begin{proof} Recall that, for any $\psi\in M_{q}(\mathfrak{g})$, we defined $\textbf{T}(\psi)=\phi^{(1)}_{(2)}\bi S(\phi^{(2)})\alpha \phi^{(3)}\rightharpoonup \psi\leftharpoonup \phi^{(1)}_{(1)}$ (see Equation \eqref{eq;1.13} before Theorem \ref{t1}). Thus:
\begin{eqnarray*}
\textbf{T}(\chi_i)&=& \phi^{(1)}_{(2)}\bi S(\phi^{(2)})\alpha \phi^{(3)}\rightharpoonup \chi_i\leftharpoonup \phi^{(1)}_{(1)}\\
&=& \sum_{a^1,a^2,b,c\in(\mathbbm{Z}_{n})^{m}}\prod_{s,t=1}^{m}\q^{-c_{st}(a^1_s+a^2_s)[\frac{b_t+c_t}{n}]}\mathbf{1}_{a^{2}}\bi
S(\mathbf{1}_{b})\alpha\mathbf{1}_{c}\rightharpoonup \chi_i\leftharpoonup \mathbf{1}_{a^{1}}\\
&=&  \sum_{a^1,a^2,b,c\in(\mathbbm{Z}_{n})^{m}}\prod_{s,t=1}^{m}\q^{-c_{st}(a^1_s+a^2_s)[\frac{b_t+c_t}{n}]}\q^{c_{st}c_{s}[\frac{n-1+c_{t}}{n}]}
\mathbf{1}_{a^{2}}\bi
S(\mathbf{1}_{b})\mathbf{1}_{c}\rightharpoonup \chi_i\leftharpoonup \mathbf{1}_{a^{1}}\\
&=&\sum_{a^1=c=\epsilon_{i},b=(n-1)\epsilon_{i},a^2\in(\mathbbm{Z}_{n})^{m}}
\prod_{s=1}^{m}\q^{-c_{si}a^2_s}\q^{-c_{ii}}\q^{c_{ii}}
\mathbf{1}_{a^{2}}\bi\chi_i\\
&=&\sum_{a\in(\mathbbm{Z}_{n})^{m}}
\prod_{s=1}^{m}\q^{-c_{si}a_s}\mathbf{1}_{a}\bi\chi_i\\
&=&H_{i}^{-1}\chi_i.
\end{eqnarray*}
Applying formula $(\star\star)$ in Theorem \ref{t1}, we have:
\begin{eqnarray*}
&&\Delta(\mathbf{T}(\chi_i))\\
&&= \tilde{\phi}^{(2)}\mathbf{T}(\chi_i\leftharpoonup \tilde{\phi}^{(1)})\phi^{(-1)}\phi^{(1)}
 \otimes\tilde{\phi}^{(3)}
\phi^{(-3)}\mathbf{T}(\phi^{(3)}\rightharpoonup \chi_i\leftharpoonup \phi^{(-2)})\phi^{(2)}\\
&&=\sum_{a^1,a^2,a^3,b^1,b^2,b^3,c^1,c^2,c^3\in(\mathbbm{Z}_{n})^{m}}\prod_{s,t=1}^{m}\q^{-c_{st}a^1_s[\frac{a^{2}_t+a^{3}_t}{n}]
+c_{st}b^1_s[\frac{b^{2}_t+b^{3}_t}{n}]-c_{st}c^1_s[\frac{c^{2}_t+c^{3}_t}{n}]}\\
&& \;\;\;\;\;\;\;\;\;\;\;\;\mathbf{1}_{a^{2}}\mathbf{T}(\chi_i\leftharpoonup \mathbf{1}_{a^{1}})\mathbf{1}_{b^{1}}\mathbf{1}_{c^{1}}\otimes
\mathbf{1}_{a^{3}}\mathbf{1}_{b^{3}}\mathbf{T}(\mathbf{1}_{c^{3}}\rightharpoonup \chi_i\leftharpoonup \mathbf{1}_{b^{2}})\mathbf{1}_{c^{2}}\\
&&=\sum_{a^1,a^2,a^3,b^2,c^3\in(\mathbbm{Z}_{n})^{m}}\prod_{s,t=1}^{m}\q^{-c_{st}a^1_s[\frac{a^{2}_t+a^{3}_t}{n}]
+c_{st}a^2_s[\frac{b^{2}_t+a^{3}_t}{n}]-c_{st}a^2_s[\frac{a^{3}_t+c^{3}_t}{n}]}\\
&& \;\;\;\;\;\;\;\;\;\;\;\;\mathbf{T}(\chi_i\leftharpoonup \mathbf{1}_{a^{1}})\mathbf{1}_{a^{2}}\otimes
\mathbf{T}(\mathbf{1}_{c^{3}}\rightharpoonup \chi_i\leftharpoonup \mathbf{1}_{b^{2}})\mathbf{1}_{a^{3}}\\
&&=\sum_{a^2,a^3\in(\mathbbm{Z}_{n})^{m}}\prod_{t=1}^{m}\q^{-c_{it}[\frac{a^{2}_t+a^{3}_t}{n}]}
\prod_{s=1}^{m}\q^{c_{si}a^2_s[\frac{1+a^{3}_i}{n}]}\prod_{s=1}^{m}\q^{-c_{si}a^2_s[\frac{a^{3}_i+1}{n}]}\\
&&\;\;\;\;\;\;\;\;\;\;\;\;\;
\mathbf{T}(\chi_i)\mathbf{1}_{a^{2}}\otimes \mathbf{T}(\chi_i)\mathbf{1}_{a^{3}}\\
&&=\sum_{a^2,a^3\in(\mathbbm{Z}_{n})^{m}}\prod_{t=1}^{m}\q^{-c_{it}[\frac{a^{2}_t+a^{3}_t}{n}]}\mathbf{T}(\chi_i)\mathbf{1}_{a^{2}}\otimes \mathbf{T}(\chi_i)\mathbf{1}_{a^{3}}.
\end{eqnarray*}
Therefore, we obtain:
\begin{eqnarray*}
\Delta(\flat_i\mathbf{T}(\chi_i))&=&\Delta(\flat_i)\Delta(\mathbf{T}(\chi_i))\\
&=&(\sum_{b,c\in(\mathbbm{Z}_{n})^{m}}\prod_{t=1}^{m}q^{-c_{it}(b_{t}+c_{t})'}\mathbf{1}_{b}\otimes \mathbf{1}_{c})\\
&&(\sum_{b,c\in(\mathbbm{Z}_{n})^{m}}\prod_{t=1}^{m}\q^{-c_{it}[\frac{b_t+c_t}{n}]}\mathbf{T}(\chi_i)\mathbf{1}_{b}\otimes \mathbf{T}(\chi_i)\mathbf{1}_{c})\\
&=&\sum_{b,c\in(\mathbbm{Z}_{n})^{m}}\prod_{t=1}^{m}q^{-c_{it}(b_{t}+c_{t})}\mathbf{1}_{b}\mathbf{T}(\chi_i)\otimes \mathbf{1}_{c}\mathbf{T}(\chi_i)\\
&=&\flat_i\mathbf{T}(\chi_i)\otimes \flat_i\mathbf{T}(\chi_i).
\end{eqnarray*}
This means that $\flat_i\mathbf{T}(\chi_i)$ is a group-like element. Since $\mathbf{T}(\chi_i)=H_{i}^{-1}\chi_i$ and
$H_{i}^{-1}$ is a group-like element, $\flat_i\chi_i$ is group-like too. Thus the proof of (4.14) is done.

Using the same method, one can show that:
$$\mathbf{T}(\Gamma^i)=1\bi \Gamma^i$$
and:
\begin{eqnarray*}
\Delta(\mathbf{T}(\Gamma^i))&=&\sum_{a^2,a^3\in(\mathbbm{Z}_{n})^{m}}\prod_{t=1}^{m}\q^{-c_{it}[\frac{a^{2}_t+a^{3}_t}{n}]}
\mathbf{1}_{a^{2}}\mathbf{T}(\Gamma^i)\otimes \mathbf{1}_{a^{3}}\\
&&+ \sum_{a^2,a^3\in(\mathbbm{Z}_{n})^{m}}\prod_{t=1}^{m}\q^{-c_{it}[\frac{a^{2}_t+a^{3}_t}{n}]}\prod_{s=1}^{m}\q^{c_{si}a^{2}_{s}
[\frac{1+a^{3}_{i}}{n}]}
\mathbf{1}_{a^{2}}\mathbf{T}(\chi_i)\otimes \mathbf{1}_{a^{3}}\mathbf{T}(\Gamma^i).
\end{eqnarray*}
Thus:
\begin{eqnarray*}
\Delta(\flat_i\mathbf{T}(\Gamma^i))&=&\Delta(\flat_i)\Delta(\mathbf{T}(\Gamma^i))\\
&=&\sum_{a^2,a^3\in(\mathbbm{Z}_{n})^{m}}\prod_{t=1}^{m}q^{-c_{it}(a^{2}_{t}+a^{3}_{t})}
\mathbf{1}_{a^{2}}\mathbf{T}(\Gamma^i)\otimes \mathbf{1}_{a^{3}}\\
&&+H_{i}^{-1}\sum_{a^2\in(\mathbbm{Z}_{n})^{m}}\prod_{t=1}^{m}\q^{-c_{it}a^{2}_{t}}\mathbf{1}_{a^{2}}\chi_i
\otimes \sum_{a^3; a^{3}_{i}\neq n-1} \prod_{t=1}^{m}\q^{-c_{it}a^{3}_{t}}\mathbf{1}_{a^{3}}\mathbf{T}(\Gamma^i)\\
&&+\sum_{a^2\in(\mathbbm{Z}_{n})^{m}}\prod_{t=1}^{m}\q^{-c_{it}a^{2}_{t}}\mathbf{1}_{a^{2}}\chi_i
\otimes \sum_{a^3; a^{3}_{i}= n-1} \prod_{t=1}^{m}\q^{-c_{it}a^{3}_{t}}\mathbf{1}_{a^{3}}\mathbf{T}(\Gamma^i)\\
&=&\flat_i\Gamma^{i}\otimes \flat_{i}+H_{i}^{-1}(\flat_i\chi_i)\otimes (\flat_i\Gamma^{i})\sum_{j=1}^{n-1}\mathbf{1}^{i}_{j}+(\flat_i\chi_i)\otimes (\flat_i\Gamma^{i})\mathbf{1}^{i}_{0}.
\end{eqnarray*}

Now (4.16) is clear.
\end{proof}

Finally, we determine the reassociator $\phi$, the elements $\alpha,\beta$ and the antipode $S$ for $D(A_{q}(\mathfrak{g}))$.
\begin{proposition} In $D(A_{q}(\mathfrak{g}))$, the reassociator is given by:
\begin{equation}\phi=\sum_{a,b,c\in (\mathbbm{Z}_{n})^{m}}(\prod_{i,j=1}^{m}\q^{-c_{ij}a_{i}[\frac{b_{j}+c_{j}}{n}]})\mathbf{1}_{a}
 \otimes \mathbf{1}_b\otimes \mathbf{1}_{c}. \end{equation}
 The elements $\alpha,\beta$ can be chosen as:
 \begin{equation} \alpha=\sum_{a\in (\mathbbm{Z}_{n})^{m}}\prod_{s,t=1}^{m}\q^{c_{st}a_{s}[\frac{n-1+a_{t}}{n}]}\mathbf{1}_{a},\;\;\;\;\beta=1.
 \end{equation}
 The antipode $S$ is determined by:
\begin{eqnarray} && S(h_{i})=h_{i}^{-1},\;\;\;\;\;S(\flat_i\chi_i)=(\flat_i\chi_i)^{-1},\\
&&S(e_{i})=-(\alpha\sum_{j=1}^{n-1}\mathbf{1}^{i}_{j}e_{i}+H_{i}\alpha \mathbf{1}^{i}_{0}e_{i})\flat_{i}\alpha^{-1},\\
&&S(\flat_i\Gamma^i)=-(H_{i}(\flat_i\chi_i)^{-1}\alpha (\flat_i\Gamma^i)\sum_{j=1}^{n-1}\mathbf{1}^{i}_{j}+(\flat_i\chi_i)^{-1}\alpha (\flat_i\Gamma^i)\mathbf{1}^{i}_{0})\flat_i^{-1}\alpha^{-1},
      \end{eqnarray}
      for $1\leq i\leq m$.
\end{proposition}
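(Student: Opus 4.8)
The plan is to derive each of the formulas (4.17)--(4.22) directly from Theorem \ref{t1}(ii), reading off the quasi-Hopf data of $D(A_{q}(\mathfrak{g}))$ from the already-computed data of $A_{q}(\mathfrak{g})$ (Lemma \ref{l2}) and the auxiliary elements $\gamma,\mathbf{f},\chi,\omega$ of Lemma \ref{14.2}. For the reassociator and the elements $\alpha,\beta$ this is immediate: Theorem \ref{t1}(ii) states $\phi_{D}=\phi\bi\e$, $\alpha_{D}=\alpha\bi\e$, $\beta_{D}=\beta\bi\e$, so under the identification of $A_{q}(\mathfrak{g})$ with its image in $D(A_{q}(\mathfrak{g}))$ (via $h\hookrightarrow h\bi\e$, justified by Lemma \ref{l4.2}) these are exactly the reassociator $\phi=d(J)$ of Lemma \ref{l1}(1) and the elements $\alpha,\beta$ of Lemma \ref{l2}. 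So (4.17) and (4.18) require nothing beyond quoting the embedding.

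For the antipode, the formulas $S_{D}(h_i)=h_i^{-1}$ and $S(e_i)=-(\alpha\sum_{j=1}^{n-1}\mathbf{1}^{i}_{j}e_{i}+H_{i}\alpha\mathbf{1}^{i}_{0}e_{i})\flat_i\alpha^{-1}$ likewise come from the fact that $A_{q}(\mathfrak{g})$ is a quasi-Hopf \emph{subalgebra}, hence $S_{D}$ restricts to the antipode of $A_{q}(\mathfrak{g})$ already recorded in Lemma \ref{l2}. The genuinely new computations are $S_{D}$ on the ``dual'' generators $\flat_i\chi_i$ and $\flat_i\Gamma^i$. Here I would use the formula $S_{D}(\mathbf{T}(\psi))=\mathbf{f}^{(2)}\mathbf{T}(\mathbf{f}^{(-2)}\rightharpoonup S^{-1}(\psi)\leftharpoonup \mathbf{f}^{(1)})\mathbf{f}^{(-1)}$ from Theorem \ref{t1}(ii), together with the identifications $\mathbf{T}(\chi_i)=H_i^{-1}\chi_i$ and $\mathbf{T}(\Gamma^i)=1\bi\Gamma^i$ established in the proof of Proposition \ref{p4.8}. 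To apply it I need $S^{-1}(\chi_i)$ and $S^{-1}(\Gamma^i)$ in $M_{q}(\mathfrak{g})$, which follow from the coalgebra-antimorphism property and the comultiplication formulas of Lemma \ref{l3.1}: $S^{-1}(\chi_i)=\chi_i^{-1}$ and $S^{-1}(\Gamma^i)$ is proportional to $\chi_i^{-1}\Gamma^i$ up to a power of $q$ coming from the commutation relation there. Then I would plug in the explicit $\mathbf{f}$ from Lemma \ref{14.2}, evaluate the $\rightharpoonup,\leftharpoonup$ actions on the surviving idempotent components, and collect the scalar exponents using Lemma \ref{l4.1} and the identity $[\frac{n-1+z}{n}]+[\frac{n-z}{n}]=1$, exactly as in the $\omega$-computation of Lemma \ref{14.2}. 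Alternatively --- and this is the cleaner route --- once we know (from Proposition \ref{p4.8}) that $\flat_i\chi_i$ is grouplike, its antipode is forced to be its inverse, giving (4.21) for free; and for (4.22) I can avoid the $\mathbf{f}$-formula entirely by applying the defining antipode axiom $\sum a_{(1)}\beta S(a_{(2)})=\beta\e(a)$ with $\beta=1$ to $a=\flat_i\Gamma^i$, using the coproduct (4.15) just computed: this yields $(\flat_i\Gamma^i)S(\flat_i)+\sum_{j=1}^{n-1}H_i^{-1}(\flat_i\chi_i)S((\flat_i\Gamma^i)\mathbf{1}^i_j)+(\flat_i\chi_i)S((\flat_i\Gamma^i)\mathbf{1}^i_0)=0$, which after using $S(\flat_i)=\flat_i^{-1}$-type relations and the known $S(\alpha)$-twist rearranges into (4.22).

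The step I expect to be the main obstacle is bookkeeping of the scalar factors in $S(\flat_i\Gamma^i)$: the formula mixes the reassociator corrections hidden in the quasi-antipode axioms (the $\alpha$, $H_i\alpha$ weights in the analogous formula for $S(e_i)$), the action of $\flat_i$ on idempotents, and the split into the $\mathbf{1}^i_0$ versus $\sum_{j\ge 1}\mathbf{1}^i_j$ pieces inherited from the coproduct. Making sure the power of $q$ (vs.\ $\q$) in front of each idempotent block matches on both sides is where errors creep in; I would cross-check the final expression (4.22) against the $\mathfrak{sl}_2$ case in \cite{Liu} and against the consistency requirement that $S_{D}$ be an algebra antimorphism compatible with the relation $(\flat_i\Gamma^i)^{l_i}=0$ of Proposition \ref{p4.5} and with (4.10). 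Since \cite{Liu} only treats the rank-one case, the new content is purely the appearance of the off-diagonal symmetrized Cartan entries $c_{ij}$ inside $\flat_i$ and $H_i$, and these enter linearly, so no new phenomenon beyond careful exponent-tracking is anticipated.
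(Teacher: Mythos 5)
Your handling of the reassociator, of $\alpha,\beta$, of $S(h_i)$, $S(e_i)$ and $S(\flat_i\chi_i)$ is exactly the paper's argument: everything is read off from Theorem \ref{t1}(ii), the embedding $h\mapsto h\bi\e$, Lemma \ref{l2}, and the fact that $\flat_i\chi_i$ is group-like by Proposition \ref{p4.8}. The only formula with real content is $S(\flat_i\Gamma^i)$, and there the paper does something simpler than either of your two routes: it applies the quasi-antipode axiom $\sum S(a_{(1)})\alpha a_{(2)}=\alpha\e(a)$ (the $\alpha$-axiom, exactly as in the derivation of $S(e_i)$ inside Lemma \ref{l2}) to $a=\flat_i\Gamma^i$ with the coproduct of Proposition \ref{p4.8}. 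Since $H_i^{-1}\flat_i\chi_i$ and $\flat_i\chi_i$ are group-like, this gives $S(\flat_i\Gamma^i)\alpha\flat_i + H_i(\flat_i\chi_i)^{-1}\alpha(\flat_i\Gamma^i)\sum_{j=1}^{n-1}\mathbf{1}^i_j + (\flat_i\chi_i)^{-1}\alpha(\flat_i\Gamma^i)\mathbf{1}^i_0 = 0$, and right-multiplying by $\flat_i^{-1}\alpha^{-1}$ yields the stated expression in one line, with the $\alpha$- and $H_i$-weights landing exactly where the formula has them; no $\mathbf{f}$, no $\mathbf{T}$-formula, and no $\q$-exponent bookkeeping is needed.

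Your ``cleaner route'' instead invokes the other axiom, $\sum a_{(1)}\beta S(a_{(2)})=\e(a)$ with $\beta=1$. That identity holds, but it is the wrong tool here: it produces an $\alpha$-free relation in which the unknown sits under non-invertible idempotents, $S((\flat_i\Gamma^i)\mathbf{1}^i_j)=S(\mathbf{1}^i_j)S(\flat_i\Gamma^i)$, so extracting $S(\flat_i\Gamma^i)$ requires a blockwise inversion, and the assertion that the result ``rearranges into'' the stated $\alpha$-laden formula is precisely the computation you have not carried out. Note also that $S(\flat_i)\neq\flat_i^{-1}$: since $S(\mathbf{1}_a)=\mathbf{1}_{-a}$, one finds $S(\flat_i)=\sum_{b}\prod_{j}q^{-c_{ij}(n-b_j)'}\mathbf{1}_b$, which differs from $\flat_i^{-1}$ by factors $\q^{-c_{ij}}$ on the blocks with $b_j\neq 0$, so the ``$S(\flat_i)=\flat_i^{-1}$-type relations'' you lean on would introduce exactly the stray $\q$-powers you flagged as the danger. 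Your first route (Schauenburg's $S_D(\mathbf{T}(\psi))$ formula with the explicit $\mathbf{f}$ of Lemma \ref{14.2}) would also work but is far heavier than necessary. In short, the proposal is essentially correct and parallel to the paper except at the last formula, where you should simply switch to the $\alpha$-axiom, which closes the argument immediately.
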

\begin{proof} By Theorem \ref{t1} (2) and Lemma \ref{l2}, the reassociator $\phi$ is given by (4.17), and $\alpha, \beta$ can be chosen
as  in (4.18). Since the elements  $h_i$ and $\flat_i\chi_i$ are group-like, (4.19) is obvious. Both (4.20) and (4.21) follow directly from the definition of the antipode and the comultiplication formulas for
$e_{i}$ and $\flat_{i}\Gamma^{i}$.
\end{proof}

\section{Presentation of quasi-Frobenius-Lusztig kernels}
In this section, we present $D(A_{q}(\mathfrak{g}))$ in terms of generators and relations.
Let $\mathfrak{g}$ be a simple Lie algebra of finite type, $A=(a_{ij})_{m\times m}$ its Cartan matrix
and $C=(d_{i}a_{ij})=(c_{ij})$ the symmetrized Cartan matrix. Let $n$ be a natural number $\geq 4$, and $q$ an $n^{2}$-th primitive root of unity, $\q=q^{n}$ and $l_{i}=\ord(q^{c_{ii}})$.

\begin{definition} The quasi-Frobenius-Lusztig kernel ${\Q}\mathbf{u}_{q}(\mathfrak{g})$ is a  quasi-Hopf algebra defined as follows.
  As an associative algebra, it is generated by $E_{i},F_{i},K_{i},\hat{K}_{i}\;(1\leq i\leq m)$ satisfying:
    \begin{gather}\label{eq;5.1} K_{i}K_{j}=K_{j}K_{i},\;\;\hat{K}_{i}\hat{K}_{j}=\hat{K}_{j}\hat{K}_{i},\;\;
    K_{i}\hat{K}_{j}=\hat{K}_{j}K_{i},\\
   \label{eq;5.2} K_i^{n}=1,\;\;\hat{K}_i^{n}=\prod_{l=1}^{m}K_{l}^{-2c_{il}},\\
   \label{eq;5.3} K_i E_j=\q^{\delta_{ij}}E_j K_i,\;\;K_i F_j=\q^{-\delta_{ij}}F_j K_i,\\
   \label{eq;5.4} \hat{K}_i E_j=\q^{c_{ij}}q^{-2c_{ij}} E_j \hat{K}_i,\;\;\hat{K}_i F_j=\q^{-c_{ij}}q^{2c_{ij}} F_j \hat{K}_i,\\
    \label{eq;5.5}F_{j}E_i-q^{-c_{ij}}E_{i}F_j=\delta_{ij}(1-\prod_{l=1}^{m}K_{l}^{-c_{il}}\hat{K}_i),\\
    E_{i}^{l_{i}}=F_{i}^{l_{i}}=0,
\end{gather}
\begin{gather}
    \left \{
\begin{array}{ll} \sum_{r+s=1-a_{ij}}(-1)^{s}\left [
\begin{array}{c} 1-a_{ij}\\s
\end{array}\right]_{d_{i}}E_{i}^{r}E_{j}E_{i}^{s}=0 & \;\;\;\;i\neq j\\
\sum_{r+s=1-a_{ij}}(-1)^{s}\left [
\begin{array}{c} 1-a_{ij}\\s
\end{array}\right]_{d_{i}}F_{i}^{r}F_{j}F_{i}^{s}=0 &
\;\;\;\;i\neq j.
\end{array}\right.
\end{gather}
 for $1\leq i,j\leq m$.

 \emph{Let $\{\mathbf{1}_{a}|a=(a_{1},\ldots,a_{m})\in (\mathbbm{Z}_{n})^{m}\}$ be the set of primitive idempotents of the group algebra of $\langle K_{i}|1\leq i\leq m\rangle\cong (\mathbbm{Z}_{n})^{m}$, $\mathbf{1}^{i}_{k}:=\frac{1}{n}\sum_{j=0}^{n-1}(\q^{n-k})^{j}K_{i}^{j}$, $\flat_i:=\sum_{a\in (\mathbbm{Z}_{n})^{m}}\prod_{j=1}^{m}q^{-c_{ij}a_{j}}\mathbf{1}_{a},\;\;H_{i}:=\prod_{j=1}^{m}K_{j}^{c_{ji}}$.}

 The reassociator $\phi$, the comultiplication $\D$, the counit $\e$, the elements $\alpha,\beta$ and the antipode $S$
    are given by

    \begin{gather} \phi=\sum_{a,b,c\in (\mathbbm{Z}_{n})^{m}}(\prod_{i,j=1}^{m}\q^{-c_{ij}a_{i}[\frac{b_{j}+c_{j}}{n}]})\mathbf{1}_{a}
 \otimes \mathbf{1}_b\otimes \mathbf{1}_{c},\\
    \D(K_{i})=K_{i}\otimes K_{i},\;\;\;\;\D(\hat{K}_{i})=\hat{K}_{i}\otimes \hat{K}_{i},\\
    \Delta(E_{i})=E_{i}\otimes \flat_{i}^{-1}+1\otimes \sum_{j=1}^{n-1}\mathbf{1}^{i}_{j}E_{i}+H_{i}^{-1}\otimes \mathbf{1}^{i}_{0}E_{i},
 \end{gather}

 \begin{gather}
    \D(F_{i})=F_i\otimes \flat_{i}+H_{i}^{-1}\hat{K}_{i}\otimes F_i\sum_{j=1}^{n-1}\mathbf{1}^{i}_{j}+\hat{K}_i\otimes F_i\mathbf{1}^{i}_{0},\\
    \e(K_{i})=\e(\hat{K}_{i})=1,\;\;\;\;\e(E_i)=\e(F_i)=0,\\
    \alpha=\sum_{a\in (\mathbbm{Z}_{n})^{m}}\prod_{s,t=1}^{m}\q^{c_{st}a_{s}[\frac{n-1+a_{t}}{n}]}\mathbf{1}_{a},\;\;\;\;\beta=1\\
    S(K_{i})=K_{i}^{-1},\;\;\;\;S(\hat{K}_{i})=\hat{K}_{i}^{-1},\\
    S(E_{i})=-(\alpha\sum_{j=1}^{n-1}\mathbf{1}^{i}_{j}E_{i}+H_{i}\alpha \mathbf{1}^{i}_{0}E_{i})\flat_{i}\alpha^{-1},\\
    S(F_i)=-(H_{i}\hat{K}_i^{-1}\alpha F_{i}\sum_{j=1}^{n-1}\mathbf{1}^{i}_{j}+\hat{K}_{i}^{-1}\alpha F_{i}\mathbf{1}^{i}_{0})\flat_i^{-1}\alpha^{-1}.
    \end{gather}
    for $1\leq i\leq m$.
\end{definition}

\begin{lemma} \label{l5.2} ${\Q}\mathbf{u}_{q}(\mathfrak{g})$ is finite dimensional and $\dim({\Q}\mathbf{u}_{q}(\mathfrak{g}))=(\dim(A_{q}(\mathfrak{g})))^{2}$.
\end{lemma}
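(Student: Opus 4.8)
The plan is to exhibit $\Q\mathbf{u}_q(\mathfrak{g})$ as a quotient of a presented algebra and then show the quotient map is an isomorphism onto $D(A_q(\mathfrak{g}))$ up to dimension count; since the isomorphism statement itself is Theorem~\ref{t5.3} (proved later), here I only need the dimension equality, so I will argue it directly. First I would recall that $\dim A_q(\mathfrak{g})$ is known: from the basis $\{\mathbf{1}_a e_i^{n_i}\mid a\in(\mathbbm{Z}_n)^m,\ 0\le n_i<l_i,\ 1\le i\le m\}$ remark in Section~3, one has $\dim A_q(\mathfrak{g})=n^m\cdot\prod_{i=1}^{m}l_i\cdot(\text{number of PBW monomials in the }e_i)$; more precisely $A_q(\mathfrak{g})$ is the subalgebra of $\mathbf{u}_q(\mathfrak{b})$ generated by $h_i=g_i^n$ and $e_i$, whose dimension is $n^m|\Phi^+|_q$ where the positive-root part has dimension $\prod_{\alpha\in\Phi^+} l_{\mathrm{ht}(\alpha)}$ in the usual FL-kernel counting. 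Since $D(H)=H\otimes H^\ast$ as a vector space for any finite dimensional quasi-Hopf algebra $H$ (Theorem~\ref{t1}), we have $\dim D(A_q(\mathfrak{g}))=(\dim A_q(\mathfrak{g}))^2$, so it suffices to prove $\dim \Q\mathbf{u}_q(\mathfrak{g})=\dim D(A_q(\mathfrak{g}))$.

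The strategy for the upper bound is a PBW/diamond-lemma argument on the presented algebra. The generators $K_i,\hat K_i$ together with relations \eqref{eq;5.1}, \eqref{eq;5.2}, \eqref{eq;5.3}, \eqref{eq;5.4} show that the group-like part generated by $K_i,\hat K_i$ is a finite abelian group of order $n^{2m}$ (the $K_i$ give $(\mathbbm{Z}_n)^m$ and the $\hat K_i$ give another $(\mathbbm{Z}_n)^m$, with $\hat K_i^n$ expressed in the $K_l$). The relations \eqref{eq;5.3}, \eqref{eq;5.4} let one move all $K_i,\hat K_i$ to one side; the relation \eqref{eq;5.5} is a straightening relation that lets one move every $F_j$ to the right of every $E_i$ at the cost of lower-order terms lying in the group algebra; the truncations $E_i^{l_i}=F_i^{l_i}=0$ and the quantum Serre relations then bound the $E$-part and the $F$-part each by $\dim(\text{positive part of }\mathbf{u}_q(\mathfrak{g}))=\prod_{\alpha\in\Phi^+}l_{\mathrm{ht}(\alpha)}$, exactly as in the classical FL-kernel case. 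Hence $\dim\Q\mathbf{u}_q(\mathfrak{g})\le n^{2m}\bigl(\prod_{\alpha\in\Phi^+}l_{\mathrm{ht}(\alpha)}\bigr)^2=(\dim A_q(\mathfrak{g}))^2$.

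For the matching lower bound I would produce a surjective algebra homomorphism $\pi\colon\Q\mathbf{u}_q(\mathfrak{g})\to D(A_q(\mathfrak{g}))$ by sending $K_i\mapsto h_i$, $\hat K_i\mapsto \flat_i\chi_i$, $E_i\mapsto e_i$, $F_i\mapsto \flat_i\Gamma^i$ (using the identification $h\bi\varepsilon\leftrightarrow h$ and $1\bi\varphi\leftrightarrow\varphi$ afforded by Lemma~\ref{l4.2}), and check that $\pi$ respects all the defining relations: relations \eqref{eq;5.1}--\eqref{eq;5.2} hold by Propositions~\ref{p4.3} and \ref{p4.6} (note $\hat K_i^n\mapsto(\flat_i\chi_i)^n=H_i^{-2}=\prod_l h_l^{-2c_{il}}$), \eqref{eq;5.3}--\eqref{eq;5.4} by Propositions~\ref{p4.3} and \ref{p4.6}, \eqref{eq;5.5} by equation (4.11) of Proposition~\ref{p4.6}, the truncations $E_i^{l_i}=0$, $F_i^{l_i}=0$ by Propositions~\ref{p4.3} and \ref{p4.5}, and the two families of quantum Serre relations by Proposition~\ref{p4.3} (for $E$) and Proposition~\ref{p4.5} (for $F$). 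Surjectivity follows because $D(A_q(\mathfrak{g}))$ is generated as an algebra by $A_q(\mathfrak{g})$ and $\mathbf{T}(M_q(\mathfrak{g})^\ast)$ by Theorem~\ref{t1}(i), and in the computations of Proposition~\ref{p4.8} we saw $\mathbf{T}(\chi_i)=H_i^{-1}\chi_i$ and $\mathbf{T}(\Gamma^i)=1\bi\Gamma^i$, so the images of $K_i,\hat K_i,E_i,F_i$ generate; combining with $\dim\Q\mathbf{u}_q(\mathfrak{g})\le\dim D(A_q(\mathfrak{g}))$ forces $\pi$ to be an isomorphism and in particular $\dim\Q\mathbf{u}_q(\mathfrak{g})=(\dim A_q(\mathfrak{g}))^2$. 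The main obstacle I anticipate is the upper-bound step: one must verify that the straightening relation \eqref{eq;5.5}, together with the commutation relations \eqref{eq;5.3}--\eqref{eq;5.4} and the quantum Serre relations, yields a confluent rewriting system with exactly the expected number of normal-form monomials --- this is the quasi-Hopf analogue of the standard (but nontrivial) PBW theorem for $\mathbf{u}_q(\mathfrak{g})$, and care is needed because the reassociator makes products nonassociative, so one should either reduce to the associative subquotients where the $E$-part and $F$-part live (the reassociator is trivial off the group-likes), or invoke the already-established comultiplication formulas to control the size of the coradical filtration.
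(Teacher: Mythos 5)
Your lower bound is sound and is a genuinely different route from the paper's: you construct the surjection $\pi\colon \Q\mathbf{u}_{q}(\mathfrak{g})\to D(A_{q}(\mathfrak{g}))$ (this is the map $\Upsilon$ of Theorem \ref{t5.3}; its homomorphism property and surjectivity use only Propositions \ref{p4.3}--\ref{p4.8} and Theorem \ref{t1}, so there is no circularity) together with the fact that $D(H)=H\otimes H^{\ast}$ as a vector space. The paper instead argues entirely inside the presented algebra: it invokes the triangular decomposition $\Q\mathbf{u}_{q}(\mathfrak{g})=\mathbf{u}^{+}\mathbf{u}^{0}\mathbf{u}^{-}$ furnished by \eqref{eq;5.3}--\eqref{eq;5.5}, asserts $\dim\mathbf{u}^{0}=n^{2m}$ and $\dim(\mathbf{u}^{\pm})\,n^{m}=\dim A_{q}(\mathfrak{g})$, and multiplies.

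The genuine gap is in your upper bound, which is exactly the nontrivial half. You bound the subalgebra $\mathbf{u}^{+}$ generated by the $E_{i}$ by the dimension of the positive part of $\mathbf{u}_{q}(\mathfrak{g})$ ``exactly as in the classical FL-kernel case'', using only the quantum Serre relations and $E_{i}^{l_{i}}=0$. That inference fails even classically: for rank $\geq 2$ the algebra presented by the quantum Serre relations together with the truncations at the simple roots alone is infinite dimensional, because (De Concini--Kac) the $l_{\alpha}$-th powers of the non-simple root vectors are central in $U_{q}^{+}$, which is free over the polynomial algebra they generate, so these powers do not lie in the ideal generated by the $E_{i}^{l_{i}}$; the count $\prod_{\alpha>0}l_{\alpha}$ for the classical positive part uses the non-simple root-vector relations (equivalently, its realization inside Lusztig's form), not the relations you list. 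Hence $\mathbf{u}^{+}\subseteq\Q\mathbf{u}_{q}(\mathfrak{g})$ is a priori only a quotient of an infinite-dimensional algebra, and the needed bound $\dim\mathbf{u}^{+}\leq \dim A_{q}(\mathfrak{g})/n^{m}$ must come from additional input --- relations forced by the cross relation \eqref{eq;5.5} and the $\hat{K}_{i}$, or an identification of $\mathbf{u}^{\pm}$ with images of the positive parts of $A_{q}(\mathfrak{g})$ and $M_{q}(\mathfrak{g})$ --- which you do not supply: you flag this as the ``main obstacle'', but neither of your two suggested fixes is carried out, and the worry about non-associativity is misplaced since a quasi-Hopf algebra is associative as an algebra (only the coproduct is quasi-coassociative). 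The paper's own proof is admittedly rough at the very same point, but your specific justification of the step is the one that would fail; a further small slip is the exponent $l_{\mathrm{ht}(\alpha)}$, which is not meaningful --- the exponent attached to a positive root $\alpha$ should be $\ord(q^{(\alpha,\alpha)})$, depending on the length of $\alpha$, not its height.
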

\begin{proof} We give a rough proof of this statement.
At first, by the relations (5.3)-(5.5), ${\Q}\mathbf{u}_{q}(\mathfrak{g})$ has an triangle decomposition:
$${\Q}\mathbf{u}_{q}(\mathfrak{g})=\mathbf{u}^+\mathbf{u}^{0}\mathbf{u}^{-}$$
where $\mathbf{u}^+$ (resp. $\mathbf{u}^-$) is the subalgebra generated by $E_{i}$ (resp. $F_{i}$) for $ 1\leq i\leq m$, and
$\mathbf{u}^{0}$ is the subalgebra generated by $K_{i},\hat{K}_{i}$ for $ 1\leq i\leq m$. It is not hard
to see that $\dim(\mathbf{u}^0)=n^{2m}$ and $\dim(\mathbf{u}^+)n^{m}=\dim(\mathbf{u}^-)n^{m}=
\dim(A_{q}(\mathfrak{g}))$. Therefore,
$$\dim({\Q}\mathbf{u}_{q}(\mathfrak{g}))=(\dim(A_{q}(\mathfrak{g})))^{2}.$$
\end{proof}

The following theorem shows  that ${\Q}\mathbf{u}_{q}(\mathfrak{g})$ is a quasi-Hopf algebra though one can also verify that (5.1)-(5.16) define a quasi-Hopf algebra.

\begin{theorem} \label{t5.3} As quasi-Hopf algebras, $D(A_{q}(\mathfrak{g}))\cong {\Q}\mathbf{u}_{q}(\mathfrak{g})$.
\end{theorem}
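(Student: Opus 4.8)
The strategy is to exhibit an explicit algebra map $\Psi\colon {\Q}\mathbf{u}_{q}(\mathfrak{g})\to D(A_{q}(\mathfrak{g}))$ on generators, check that it respects all the defining relations, then verify it is a morphism of quasi-Hopf algebras, and finally show it is bijective by a dimension count. On generators I will set
\[
\Psi(K_{i})=h_{i}\bi\e,\quad \Psi(\hat{K}_{i})=\flat_{i}\chi_{i},\quad \Psi(E_{i})=e_{i}\bi\e,\quad \Psi(F_{i})=\flat_{i}\Gamma^{i},
\]
where $h_{i},e_{i}$ are the generators of the quasi-Hopf subalgebra $A_{q}(\mathfrak{g})\subseteq D(A_{q}(\mathfrak{g}))$ and $\chi_{i},\Gamma^{i}\in M_{q}(\mathfrak{g})$ are as in Section 3, with $\flat_{i}$ the element of~\eqref{eq;1}. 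All of the target relations have already been verified in Section 4: relations \eqref{eq;5.1}--\eqref{eq;5.5} together with the truncation and Serre relations for the $E_{i}$, $F_{i}$ are exactly Propositions \ref{p4.3}, \ref{p4.5} and \ref{p4.6} (note in particular that $\hat K_i^n=\prod_l K_l^{-2c_{il}}$ is the second identity in (4.7), that $\hat K_i E_j = \q^{c_{ij}}q^{-2c_{ij}}E_j\hat K_i$ is (4.9), and that the mixed relation \eqref{eq;5.5} is (4.11)). Hence $\Psi$ is a well-defined algebra homomorphism.

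Next I would check that $\Psi$ intertwines the quasi-Hopf structures. The reassociator, counit, $\alpha$, $\beta$ and antipode of ${\Q}\mathbf{u}_{q}(\mathfrak{g})$ were defined in (5.11), (5.15)--(5.20) precisely so as to match the formulas obtained for $D(A_{q}(\mathfrak{g}))$ in Propositions \ref{p4.7}, \ref{p4.8} and the final Proposition of Section 4: the comultiplication of $K_i,\hat K_i,E_i,F_i$ matches (4.12), (4.14), (4.15); the reassociator matches (4.17); the counit matches (4.13), (4.16); and $\alpha,\beta,S$ match (4.18)--(4.21). So once $\Psi$ is known to be a well-defined algebra map, compatibility with $\D$, $\e$, $\phi$, $\alpha$, $\beta$, $S$ is a term-by-term comparison of these displayed formulas, requiring no new computation. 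This shows $\Psi$ is a morphism of quasi-Hopf algebras.

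It remains to prove $\Psi$ is an isomorphism. Surjectivity is immediate: by Theorem \ref{t1}(i), $D(A_{q}(\mathfrak{g}))$ is generated as an algebra by $A_{q}(\mathfrak{g})$ together with $\mathbf{T}(M_{q}(\mathfrak{g}))$, and the proof of Proposition \ref{p4.8} shows $\mathbf{T}(\chi_i)=H_i^{-1}\chi_i$ and $\mathbf{T}(\Gamma^i)=1\bi\Gamma^i$; since $M_{q}(\mathfrak{g})$ is generated by the $\chi_i$ and $\Gamma^i$ (it is coradically graded pointed with this Gabriel quiver, by Proposition \ref{p3.3} and Proposition \ref{p3.4}), and $H_i$, $\flat_i$ lie in $\k\langle h_i\rangle\subseteq\operatorname{im}\Psi$, the image of $\Psi$ contains all generators of $D(A_{q}(\mathfrak{g}))$. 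For injectivity I invoke Lemma \ref{l5.2}: $\dim {\Q}\mathbf{u}_{q}(\mathfrak{g})=(\dim A_{q}(\mathfrak{g}))^{2}=\dim\bigl(A_{q}(\mathfrak{g})\otimes A_{q}(\mathfrak{g})^{\ast}\bigr)=\dim D(A_{q}(\mathfrak{g}))$, so a surjection between spaces of equal finite dimension is an isomorphism.

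The main obstacle is Lemma \ref{l5.2} itself, i.e. establishing that the abstractly presented algebra ${\Q}\mathbf{u}_{q}(\mathfrak{g})$ really has dimension $(\dim A_{q}(\mathfrak{g}))^{2}$ and not something larger — this is where one must show the triangular decomposition ${\Q}\mathbf{u}_{q}(\mathfrak{g})=\mathbf{u}^{+}\mathbf{u}^{0}\mathbf{u}^{-}$ holds and that no further collapsing occurs, so that the relations \eqref{eq;5.1}--(5.10) are not over-determined; the subtlety is that $\Psi$ a priori only gives the inequality $\dim{\Q}\mathbf{u}_{q}(\mathfrak{g})\ge\dim D(A_{q}(\mathfrak{g}))$ after surjectivity, so one genuinely needs the independent upper bound from Lemma \ref{l5.2}. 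Once that bound is in hand, the argument closes. (A variant avoiding the upper bound: build the inverse map $D(A_{q}(\mathfrak{g}))\to{\Q}\mathbf{u}_{q}(\mathfrak{g})$ directly by sending $h_i\mapsto K_i$, $\mathbf{T}(\chi_i)\mapsto H_i^{-1}\hat K_i$, $\mathbf{T}(\Gamma^i)\mapsto \flat_i^{-1}F_i$ and checking the $D(H)$-relations of Theorem \ref{t1}; but this reproves essentially the same computations of Section 4 in the other direction and is less economical.)
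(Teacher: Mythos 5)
Your proposal is correct and follows essentially the same route as the paper: the same assignment $K_i\mapsto h_i$, $\hat K_i\mapsto \flat_i\chi_i$, $E_i\mapsto e_i$, $F_i\mapsto \flat_i\Gamma^i$, with the relations checked via Propositions \ref{p4.3}, \ref{p4.5}, \ref{p4.6}, the quasi-coalgebra structure via Propositions \ref{p4.7} and \ref{p4.8}, surjectivity from Theorem \ref{t1}(i), and bijectivity by the dimension count of Lemma \ref{l5.2}. Your added remarks (spelling out why $\mathbf{T}(M_q(\mathfrak{g}))$ lies in the image, and flagging that the independent upper bound in Lemma \ref{l5.2} is what makes the dimension count close) are accurate refinements of the same argument, not a different proof.
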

\begin{proof} Define a map
\begin{eqnarray}\Upsilon:\;{\Q}\mathbf{u}_{q}(\mathfrak{g})\To D(A_{q}(\mathfrak{g})),
&&K_{i}\mapsto h_{i},\;\; \hat{K}_{i}\mapsto \flat_i\chi_i,\notag\\
&&E_i\mapsto e_i,\;\;F_i\mapsto \flat_i\Gamma^i.\notag
\end{eqnarray}
By Propositions \ref{p4.3}, \ref{p4.5} and \ref{p4.6}, $\Upsilon$ is an algebra morphism. By Propositions \ref{p4.7} and
 \ref{p4.8}, $\Upsilon$  preserves the comultiplication. Thanks to  Theorem \ref{t1} (1), $\Upsilon$ is surjective, and hence bijective as the dimensions of the two algebras are equal (see Lemma \ref{l5.2}).
\end{proof}

\section{Twist equivalence}

In this section, we determine when the  quasi-Hopf algebra ${\Q}\mathbf{u}_{q}(\mathfrak{g})$ is not twisted equivalent to a Hopf algebra.

\begin{definition} \emph{(1)} We call a quasi-Hopf algebra $H$  \emph{twist equivalent} to another quasi-Hopf algebra $K$ if there is a twist $J$ of $H$ such that $K\cong H_{J}$ as quasi-bialgebras.

 \emph{(2)}  A quasi-Hopf algebra $H$ is said to be \emph{genuine}  if $H$ is not twist equivalent to any ordinary Hopf algebra.
 \end{definition}

We give various sufficient conditions for ${\Q}\mathbf{u}_{q}(\mathfrak{g})$  to be genuine.

\begin{theorem}\label{t6.1} Assume $\mathfrak{g}$ is of type $A_{m}$ for $m\geq 2$.

\emph{(1)} If $(m+1)|n$, then ${\Q}\mathbf{u}_{q}(\mathfrak{g})$ is a genuine quasi-Hopf algebra.

\emph{(2)} If $m$ is odd and $4|n$, then ${\Q}\mathbf{u}_{q}(\mathfrak{g})$ is a genuine quasi-Hopf algebra.
\end{theorem}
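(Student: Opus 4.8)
The plan is to detect genuineness through the tensor subcategory of invertible objects of $\Rep({\Q}\mathbf{u}_q(\mathfrak{g}))$, which reduces the assertion to showing that a specific $3$-cocycle on a group of characters is not a $3$-coboundary; the latter is then settled with the machinery of Subsection 2.4.

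The first step is the reduction. By Theorem \ref{t5.3} we may replace ${\Q}\mathbf{u}_q(\mathfrak{g})$ by $D(A_q(\mathfrak{g}))$. Since $E_i,F_i$ are nilpotent, every one-dimensional module of ${\Q}\mathbf{u}_q(\mathfrak{g})$ kills them, hence factors through the group algebra $\k\Gamma$ of the finite abelian group $\Gamma=\langle K_i,\hat K_i:1\le i\le m\rangle$, and conversely each character of $\Gamma$ gives a one-dimensional (hence invertible) module. Using the coproduct formulas of Propositions \ref{p4.7} and \ref{p4.8} one checks that under $\otimes$ these one-dimensional modules form the group $\widehat\Gamma$ and that the associativity constraint on the resulting pointed tensor subcategory $\mathrm{Vec}_{\widehat\Gamma}^{\,\omega}$ is the $3$-cocycle $\omega$ attached to the reassociator $\phi$ of $D(A_q(\mathfrak{g}))$: writing $\phi\in(\k\langle K_i\rangle)^{\otimes 3}\subseteq(\k\Gamma)^{\otimes 3}$, its Fourier transform is the inflation, along the surjection $\pi\colon\widehat\Gamma\to\widehat{\langle K_i\rangle}\cong(\mathbbm{Z}_n)^m$, of the cocycle $\hat\phi_0(a,b,c)=\prod_{i,j}\q^{-c_{ij}a_i[\tfrac{b_j+c_j}{n}]}$. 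If now ${\Q}\mathbf{u}_q(\mathfrak{g})$ were twist equivalent to a Hopf algebra $H'$, then $\Rep({\Q}\mathbf{u}_q(\mathfrak{g}))\simeq\Rep(H')$ as tensor categories, and restricting the equivalence to invertible objects would identify $\mathrm{Vec}_{\widehat\Gamma}^{\,\omega}$ with the category of one-dimensional $H'$-modules, a pointed category with \emph{trivial} associator; hence $[\omega]=0$ in $H^3(\widehat\Gamma,\k^{\ast})$. This is exactly the mechanism used in the Remark following Lemma \ref{l2}. So it is enough to prove that $\omega$ is not a $3$-coboundary when $(m+1)\mid n$, and when $m$ is odd and $4\mid n$.

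The second step is the cohomological computation. From the relations \eqref{eq;5.1}--\eqref{eq;5.2}, $\Gamma\cong\mathbbm{Z}^{2m}/M\mathbbm{Z}^{2m}$ with $M=\left(\begin{smallmatrix}nI_m & 2C\\ 0& nI_m\end{smallmatrix}\right)$, i.e. $\widehat\Gamma$ is an extension of $(\mathbbm{Z}_n)^m$ by $(\mathbbm{Z}_n)^m$ whose class is the reduction of $2C$ modulo $n$. Because $C$ is the $A_m$ Cartan matrix with $\det C=m+1$, the Smith normal form of $M$ gives a cyclic decomposition of $\widehat\Gamma$ whose orders are $\gcd(n,2)$, $n^2/\gcd(n,2)$ (each with multiplicity $m-1$) and $\gcd(n,2(m+1))$, $n^2/\gcd(n,2(m+1))$; fixing generators adapted to it, together with the resulting formula for $\pi$, one evaluates $\omega$ on the free generators $\Psi_{r,r,r},\Psi_{r,r,s},\Psi_{r,s,s},\Psi_{r,s,t}$ of the resolution $K_{\bullet}$ by means of the chain map $F_{\bullet}$ of Lemma \ref{l1.6} (the floor terms requiring Lemma \ref{l4.1}), and applies Corollary \ref{c1.7} together with Lemma \ref{l1.5}. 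This turns ``$\omega$ is a $3$-coboundary'' into a finite list of multiplicative identities among those values. Under hypothesis (1) the cyclic factor of order $\gcd(n,2(m+1))\ge m+1\ge 3$ — where the diagonal entry $c_{ii}=2$, the off-diagonal entries $c_{ij}=-1$ and the relation $\hat K_i^{\,n}=\prod_l K_l^{-2c_{il}}$ conspire — carries a value of $\omega$ that cannot be cancelled modulo $n$, so one of the Lemma \ref{l1.5} identities breaks. Under hypothesis (2), when $4\mid n$ the $\mathbbm{Z}_2$-type factors persist and the contributions of the off-diagonal entries along the $A_m$ chain telescope to a residue that is non-zero precisely because the number $m$ of nodes is odd; again a single $\Psi$-generator witnesses $\omega\notin B^3(\widehat\Gamma,\k^{\ast})$.

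The reduction and the appeal to Corollary \ref{c1.7}/Lemma \ref{l1.5} are formal; the real work, and the main obstacle, is the second step: performing the Smith normal form of $M$ \emph{while retaining an explicit description of the projection} $\pi$, selecting generators of $\widehat\Gamma$ compatible with $\pi$, and then evaluating all the required values of $\omega$ on the $\Psi$'s — where the interplay of the change of basis with the floor functions $[\tfrac{b_j+c_j}{n}]$ is genuinely delicate — and finally identifying which identity in Lemma \ref{l1.5} fails and checking that the arithmetic hypotheses $(m+1)\mid n$ and ($m$ odd, $4\mid n$) are exactly the conditions under which it breaks.
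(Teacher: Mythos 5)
Your overall strategy --- detecting genuineness by restricting the associator to the pointed subcategory of invertible objects and then invoking Corollary \ref{c1.7} and Lemma \ref{l1.5} --- is the same as the paper's, but the execution has two genuine problems. First, the group of invertible objects is \emph{not} $\widehat\Gamma$. A character $\rho$ of $\Gamma=\langle K_i,\hat K_i\rangle$ extends to a one-dimensional ${\Q}\mathbf{u}_{q}(\mathfrak{g})$-module only if it is compatible with relation \eqref{eq;5.5}: since $E_i$ and $F_i$ necessarily act by $0$ (already $K_iE_i=\q E_iK_i$ with $\q\neq 1$ forces this), the right-hand side of \eqref{eq;5.5} must vanish, i.e. $\rho(\hat K_i)=\rho(H_i)$; combined with $\hat K_i^{\,n}=H_i^{-2}$ from \eqref{eq;5.2} this forces $\rho(H_i)=\pm 1$. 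Hence the invertible objects form the \emph{proper} subgroup of characters $\rho_0$ of $\langle K_i\rangle\cong(\Z_n)^m$ satisfying $\rho_0(H_i)^2=1$, with $\hat K_i$ forced to act by $\rho_0(H_i)$ --- this is exactly the ``principle'' the paper states before treating the other types. Your entire second step, the Smith normal form of $M=\left(\begin{smallmatrix}nI_m&2C\\0&nI_m\end{smallmatrix}\right)$ and the resulting cyclic decomposition of $\widehat\Gamma$, is therefore a computation on the wrong group.

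Second, and more decisively, the computation that actually proves the theorem is never carried out. The claims that the cyclic factor of order $\gcd(n,2(m+1))$ ``carries a value of $\omega$ that cannot be cancelled'' and that the off-diagonal contributions ``telescope to a residue that is non-zero precisely because $m$ is odd'' are precisely what must be proved, and you concede as much in your closing paragraph. The paper avoids all of this machinery by exhibiting, in each case, one explicit one-dimensional module $X$ (for instance $K_i\mapsto\zeta_{m+1}^{\,i}$ with $\zeta_{m+1}=\q^{n/(m+1)}$ when $(m+1)\mid n$, and a $\q^{n/4}$-valued character on the odd nodes in case (2)), restricting $\phi$ to the subcategory $\langle X\rangle$, and evaluating a single quantity via the chain map $F_3$ of Lemma \ref{l1.6}: one finds $f_{1,1,1}=\zeta_{m+1}^{-c_{11}}=\zeta_{m+1}^{-2}\neq 1$ since $m+1\geq 3$, which already violates the coboundary condition $f_{l,l,l}=1$ of Lemma \ref{l1.5}. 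Until you either perform your cohomological computation on the correct group or replace it by such an explicit witness, the argument is an outline rather than a proof.
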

\begin{proof}
 (1) Let $d= \frac{n}{m+1}$ and $\zeta_{m+1}=\q^{d}$. Let $G:=\langle K_{i}|1\leq i\leq m\rangle$ be the subgroup generated by $K_i$'s in ${\Q}\mathbf{u}_{q}(\mathfrak{g})$. Consider the following $1$-dimensional representation of $G$:
 $$\rho:\; G\To \k,\;\;K_{i}\mapsto \zeta_{m+1}^{i}.$$
  We show that $\rho$ can be extended to a $1$-dimensional representation of ${\Q}\mathbf{u}_{q}(\mathfrak{g})$, still denoted by $\rho$. Indeed, we may define:
 $$\rho:\; {\Q}\mathbf{u}_{q}(\mathfrak{g})\To \k,\;\;K_{i}\mapsto \zeta_{m+1}^{i},\;\;\hat{K}_{i}\mapsto 1,\;\;
 E_{i}\mapsto 0,\;\; F_{i}\mapsto 0,$$
 for $1\leq i\leq m$. We need to show that $\rho$ is a well-defined algebra morphism. By our choice, we have $\rho(H_{i})=\rho(\prod_{j=1}^{m}(K_j)^{c_{ji}})=1$. Therefore, the relations \eqref{eq;5.2} and \eqref{eq;5.5} are preserved by $\rho$. The other relations can be checked easily. Thus $\rho$  is  well-defined.

 Now let $X$ be this $1$-dimensional  ${\Q}\mathbf{u}_{q}(\mathfrak{g})$-module and $\langle X\rangle$ be the tensor subcategory generated by $X$.
   Define:
\begin{gather*}X^{\stackrel{\rightharpoonup}{\otimes l}}=:\stackrel{l}{\overbrace{(\cdots(X\otimes X)\otimes X)\cdots)}}.\end{gather*}
Then the objects of $\langle X\rangle$ are direct sums of elements  in $\{X^{\stackrel{\rightharpoonup}{\otimes l}}|0\leq l< m+1\}$.
   Now assume that ${\Q}\mathbf{u}_{q}(\mathfrak{g})$  is twist equivalent to a Hopf algebra. By the general principle of Tannaka-Krein duality (see, e.g., \cite{CE}), there is a fiber functor
   from the category Rep-${\Q}\mathbf{u}_{q}(\mathfrak{g})$ to the category of $\k$-spaces. Thus its restriction to $\langle X\rangle$ is still a fiber functor. This implies that
   the restriction of $\phi$ to $\langle X\rangle$ should come from a 3-coboundary of $(\mathbbm{Z}_{m+1})^{m}$.
   In fact, by the definition of $\rho$, $\mathbf{1}_{a}X\neq 0$ if and only if $kd|a_{k}$ for $1\leq k\leq m$,  and hence
 $$\phi|_{\langle X\rangle}=\sum_{a,b,c\in (\mathbbm{Z}_{m+1})^{m}} \prod_{s,t=1}^{m}\zeta_{m+1}^{-c_{st}sa_{s}}[\frac{tb_{t}+t c_{t}}{m+1}]\mathbf{1}_{a}\otimes \mathbf{1}_{b}\otimes \mathbf{1}_{c}.$$
 Here $\mathbf{1}_{x},\; x=a,b,c$, denotes a primitive element in $\k((\mathbbm{Z}_{m+1})^{m})$. This corresponds to a $3$-cocycle $\Phi$ over
 $((\mathbbm{Z}_{m+1})^{m})^{\wedge}$, the character group of $(\mathbbm{Z}_{m+1})^{m}$. By definition, $\Phi(\chi_{a},\chi_{b},\chi_{c})=\prod_{s,t=1}^{m}\zeta_{m+1}^{-c_{st}sa_{s}[\frac{tb_{t}+t c_{t}}{m+1}]}$
 where $\chi_{x}, x=a,b,c$, is the dual element of $\mathbf{1}_{x}$.

 Now we show that $\Phi$ is not a coboundary and thus we get a contradiction. By Corollary \ref{c1.7}, it is enough to
 compute $F_{3}^{\ast}(\Phi)$. We use the same notations as in Subsection 1.4. We have the following:
 \begin{eqnarray*}
f_{1,1,1}&=&F_{3}^{\ast}(\Phi)(\Psi_{1,1,1})\\
&=&\prod_{l=0}^{m}\Phi(\chi_{\epsilon_{1}},\chi_{l\epsilon_{1}},\chi_{\epsilon_{1}})\\
&=&\zeta_{m+1}^{-c_{11}}\neq 1.
\end{eqnarray*}
By Corollary \ref{c1.7} and Lemma \ref{l1.5},  $\Phi$ is not a coboundary.

(2) Consider the following $1$-dimensional representation of $G$:

\begin{equation}\label{onerep}
\rho:\;G\To \k,\;\; \left\{ \begin{array}{ll}
K_{i}\mapsto 1, & \textrm{if}\ i\ \textrm{is even}\\
K_i\mapsto \q^{\frac{n}{4}}, & \textrm{if}\ i \ \textrm{is odd}.
\end{array}\right.
\end{equation}

It is not hard to see that $\rho(H_i)=\pm 1$, and that $\rho$ can be also extended to a ${\Q}\mathbf{u}_{q}(\mathfrak{g})$-module by setting:
\begin{equation}\label{oneact}
\rho(E_i)=\rho(F_i)=0,\;\;\rho(\hat{K}_i)=\rho(H_{i})
\end{equation}
for $1\leq i\leq m$. Thus using the same argument developed in the proof of (1), we get the desired result.
  \end{proof}

  \begin{theorem} The following hold.

  \emph{(1)} Assume $\mathfrak{g}$ is of type $B_{m}$. If either $2|m$ and $4|n$ or $2\nmid m$ and $8|n$, then ${\Q}\mathbf{u}_{q}(\mathfrak{g})$ is genuine;

  \emph{(2)} Assume $\mathfrak{g}$ is of type $C_{m}$, or $D_{m}$, or $E_{7}$. If $4|n$, then ${\Q}\mathbf{u}_{q}(\mathfrak{g})$ is genuine;

  \emph{(3)} Assume $\mathfrak{g}$ is of type $E_{6}$. If $3|n$, then ${\Q}\mathbf{u}_{q}(\mathfrak{g})$ is genuine.

 \end{theorem}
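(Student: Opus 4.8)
The plan is to establish all three cases by the argument of Theorem~\ref{t6.1}. If ${\Q}\mathbf{u}_{q}(\mathfrak{g})$ were twist equivalent to a Hopf algebra, then $\Rep$-${\Q}\mathbf{u}_{q}(\mathfrak{g})$ would admit a fiber functor, and hence so would the tensor subcategory $\langle X\rangle$ generated by any one-dimensional module $X$. Thus it suffices, for each type, to exhibit a one-dimensional $X$ for which the restriction $\phi|_{\langle X\rangle}$ of the reassociator is a $3$-cocycle of a finite abelian group that is \emph{not} a $3$-coboundary; this produces the desired contradiction and shows ${\Q}\mathbf{u}_{q}(\mathfrak{g})$ is genuine.

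First I would write down, for each type, a one-dimensional representation $\rho\colon{\Q}\mathbf{u}_{q}(\mathfrak{g})\to\k$ by setting $\rho(E_i)=\rho(F_i)=0$, $\rho(\hat{K}_i)=\rho(H_i)$, and choosing the scalars $\rho(K_i)$ among roots of unity of order dividing $4$ (respectively $8$, respectively $3$). With $\rho(E_i)=\rho(F_i)=0$ the relations \eqref{eq;5.1}--\eqref{eq;5.4}, the nilpotency relations and the Serre relations hold automatically; relation \eqref{eq;5.5} holds because $\rho$ kills $1-H_i^{-1}\hat{K}_i$; and relation \eqref{eq;5.2} reduces to $\rho(H_i)^{n+2}=1$. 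Hence everything comes down to choosing the $\rho(K_i)$ so that every $\rho(H_i)=\prod_{j}\rho(K_j)^{c_{ij}}$ equals $\pm1$ (or $1$ in the $E_6$ case), which is then compatible with \eqref{eq;5.2} precisely because $4\mid n$ (resp.\ $3\mid n$). Reading off the symmetrized Cartan matrices, one checks that this is achievable with fourth roots of unity for $C_m$, $D_m$, $E_7$ and for $B_m$ with $m$ even, and with a cube root of unity for $E_6$, while for $B_m$ with $m$ odd the constraints linking consecutive $\rho(K_i)$ force one up to eighth roots of unity --- this is exactly where the dichotomy $4\mid n$ versus $8\mid n$ (and the hypothesis $3\mid n$ for $E_6$) comes from.

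Since $X$ is one-dimensional, each iterated tensor power $X^{\otimes l}$ (with any bracketing) is again one-dimensional, and $\mathbf{1}_aX^{\otimes l}\neq0$ only when each $a_i$ lies in the cyclic subgroup of $\mathbbm{Z}_n$ of order $\ord(\rho(K_i))$. Consequently the objects of $\langle X\rangle$ are direct sums of the $X^{\otimes l}$, the tensor group is a product of copies of $\mathbbm{Z}_r$ with $r\in\{3,4,8\}$, and $\phi|_{\langle X\rangle}$ is the explicit $3$-cocycle $\Phi$ obtained from the formula for $\phi$ by replacing $\q$ with the relevant primitive $r$-th root of unity $\zeta=\q^{n/r}$, just as in Theorem~\ref{t6.1}. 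To see that $\Phi$ is not a coboundary, I would apply Corollary~\ref{c1.7} to reduce to $F_3^{\ast}(\Phi)$ and then Lemma~\ref{l1.5}: it is enough that one coordinate of $F_3^{\ast}(\Phi)$ fail a coboundary condition. Using the chain map $F_3$ of Lemma~\ref{l1.6}, for a well-chosen node $i_0$ one computes
\[
f_{i_0,i_0,i_0}=\prod_{l=0}^{r-1}\Phi(\chi_{\epsilon_{i_0}},\chi_{l\epsilon_{i_0}},\chi_{\epsilon_{i_0}})
\]
and finds it to be a nontrivial root of unity (for instance $-1$ in the simply-laced cases, where $c_{i_0i_0}=2$); when $i_0$ must be a long node with $c_{i_0i_0}$ divisible by $r$, one instead exhibits a mixed coordinate $f_{i,i,j}$, $f_{i,j,j}$ that cannot be of the form $g^{m_i}$, $g^{-m_j}$ demanded by Lemma~\ref{l1.5}. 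Since the divisibility hypothesis on $n$ makes $\zeta$ have the stated order, this value is $\neq1$, so $\Phi$ is not a coboundary and ${\Q}\mathbf{u}_{q}(\mathfrak{g})$ is genuine.

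The main obstacle is the compatibility bookkeeping in the first and last steps. The diagonal entries of the symmetrized Cartan matrix are $2$ on short nodes but $4$ on long nodes, so the obstruction coordinate has to be localized at a node whose self-pairing survives the $r$-th power, while at the same time every $\rho(H_i)$ must be kept a square root (or cube root) of unity; reconciling these two demands is precisely what pins down the numerical conditions in the statement, and in particular forces the passage from fourth to eighth roots for odd-rank $B_m$. Once the correct representation and obstruction coordinate are in hand, the remaining computations run parallel to the proof of Theorem~\ref{t6.1} and are routine.
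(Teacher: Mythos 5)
Your proposal follows the paper's proof exactly: the paper likewise reduces all three cases to the argument of Theorem~\ref{t6.1} together with the observation that a one-dimensional $\k G$-module with every $\rho(H_i)=\pm 1$ extends to ${\Q}\mathbf{u}_{q}(\mathfrak{g})$ via $\rho(E_i)=\rho(F_i)=0$ and $\rho(\hat{K}_i)=\rho(H_i)$, and its entire remaining content is the explicit list of characters $\rho(K_i)$ for each type (e.g.\ $K_m\mapsto \q^{n/8}$ for $B_m$ with $m$ odd, $K_m\mapsto \q^{n/4}$ for $C_m$, $K_{m-1},K_m\mapsto \q^{n/4}$ for $D_m$, cube roots of unity on the nodes of $E_6$, and $\q^{n/4}$ on nodes $2,5,7$ of $E_7$). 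Your outline is correct and isolates exactly the right constraints (including why odd-rank $B_m$ forces eighth roots of unity); the one step you defer --- ``one checks that this is achievable'' --- is precisely the data the paper writes down, while your sketch of the final non-coboundary coordinate is actually more detail than the paper gives, since it simply points back to Theorem~\ref{t6.1}.
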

  \begin{proof} The proof is almost the same as the proof of Theorem \ref{t6.1}. Therefore, we only provide the construction of the $1$-dimensional modules.

 The principle for the construction of such a $1$-dimensional module is: For a $1$-dimensional ${\Q}\mathbf{u}_{q}(\mathfrak{g})$-module, the actions of $E_{i}$ and $F_{i}$
  must be trivial since they are nilpotent. Thus Relation \eqref{eq;5.5} implies that the action of $H_{i}^{-1}\hat{K}_i$ is trivial as well. Applying Relation \eqref{eq;5.2}, the action of
  $H_{i}^{-2}$ is also trivial and hence the action of $H_{i}$ must be $\pm 1$. So a necessary condition for a $1$-dimensional $\k G$-module to be extended
   to a ${\Q}\mathbf{u}_{q}(\mathfrak{g})$-module is: the action of $H_{i}$ is $\pm 1$.  Conversely, given a $1$-dimensional $\k G$-module $(V,\rho)$ satisfying $\rho(H_{i})=\pm 1$,  it can be extended to a ${\Q}\mathbf{u}_{q}(\mathfrak{g})$-module if we set
  $\rho(E_i)=\rho(F_{i})=0$ and  $\rho(\hat{K}_i)=\rho(H_{i})$.  \\[1.5mm]
 \textbf{ Type $B_{m}$}:  If $m$ is even and $4|n$, the 1-dim representation $\rho$ in (\ref{onerep}) extends to a ${\Q}\mathbf{u}_{q}(\mathfrak{g})$-module as in the proof of Theorem \ref{t6.1}.

If $m$ is odd and $8|n$, define:
$$
\rho:\;G\To \k,\;\; \left\{ \begin{array}{ll}
K_{i}\mapsto 1, & \textrm{if}\ i\ \textrm{is even}\\
K_i\mapsto \q^{\frac{n}{4}}, & \textrm{if}\ i \ \textrm{is odd and}\ i\neq m,\\
K_{m}\mapsto \q^{\frac{n}{8}}.&
\end{array}\right.
$$
 Then $\rho$ extends to a ${\Q}\mathbf{u}_{q}(\mathfrak{g})$-module by adding (\ref{oneact}).

\textbf{Type $C_{m}$}: If $4|n$,   define:
$$\rho:\;G\To \k,\;\; \left\{ \begin{array}{ll}
K_{i}\mapsto 1, & \mathrm{for}\ i< m,\\
K_m\mapsto \q^{\frac{n}{4}}. &
\end{array}\right.$$
Then $\rho$ extends to a ${\Q}\mathbf{u}_{q}(\mathfrak{g})$-module by adding (\ref{oneact}).

\textbf{Type $D_{m}$}: If $4|n$,   define:
$$\rho:\;G\To \k,\;\; \left\{ \begin{array}{ll}
K_{i}\mapsto 1, & \mathrm{if}\ i<  m-1,\\
K_i\mapsto \q^{\frac{n}{4}}, & \mathrm{if}\ i=m-1, m.
\end{array}\right.$$
Then $\rho$ extends to a ${\Q}\mathbf{u}_{q}(\mathfrak{g})$-module by adding (\ref{oneact}).

\textbf{Type $E_{6}$}: If $3|n$,   define:
$$\rho:\;G\To \k,\;\; \left\{ \begin{array}{ll}
K_{i}\mapsto 1, & \mathrm{if}\ i=2,4,\\
K_{i}\mapsto \q^{\frac{n}{3}}, & \mathrm{if}\ i=1,5,\\
K_i\mapsto \q^{\frac{2n}{3}}, & \mathrm{if}\ i=3,6.
\end{array}\right.$$
Then $\rho$ extends to a ${\Q}\mathbf{u}_{q}(\mathfrak{g})$-module by adding (\ref{oneact}).

\textbf{Type $E_{7}$}: If $4|n$,   define:
$$\rho:\;G\To \k,\;\; \left\{ \begin{array}{ll}
K_{i}\mapsto 1, & \mathrm{if}\ i=1,3,4,6,\\
K_i\mapsto \q^{\frac{n}{4}}, & \mathrm{if}\ i=2,5,7.
\end{array}\right.$$
Then $\rho$ extends to a ${\Q}\mathbf{u}_{q}(\mathfrak{g})$-module by adding (\ref{oneact}).

\end{proof}

\begin{remark} \emph{(1) Except type $G_{2}$, Etingof and Gelaki proved that $D(A_{q}(\mathfrak{g}))$ is always twist equivalent to
a Hopf algebra provided $n$ is odd and $(n,|(a_{ij})|)=1$ (for type $G_2$, they need one more condition, that
is, $3\nmid n$), where $|(a_{ij})|$ is the determinant of the Cartan matrix. It is well-known that
 the determinant of the Cartan matrix of type $A_m$ (resp. $E_6$) is $m+1$ (resp. $3$). Therefore, our results imply that the condition `` $n$ is odd and $(n,|(a_{ij})|)=1$" can not be removed in general. One could even ask whether such a condition is a necessary condition. But it is not. For example, let $n=2$. One can use Corollary \ref{c1.7} and Lemma \ref{l1.5} to show that $\phi$ is already a coboundary in $A_{q}(\mathfrak{g})$ and thus
 $A_{q}(\mathfrak{g})$ is twist equivalent to a Hopf algebra. Therefore, $D(A_{q}(\mathfrak{g}))$ is twist equivalent to a Hopf algebra too.}

 \emph{(2) Our methods cannot be applied to Lie algebras of type $E_{8}, F_{4}$ and $G_{2}$. We do not know whether there is an $n$ such that
 ${\Q}\mathbf{u}_{q}(\mathfrak{g})$ is genuine when $\mathfrak{g}$ is one of these types. }
\end{remark}

\begin{problem} \emph{(1) For type $E_{8}$, is ${\Q}\mathbf{u}_{q}(\mathfrak{g})$  twist equivalent to a Hopf algebra?
For type $F_{4}$, is ${\Q}\mathbf{u}_{q}(\mathfrak{g})$ genuine when $4|n$? For type $G_{2}$, is ${\Q}\mathbf{u}_{q}(\mathfrak{g})$ genuine when $6|n$?}

 \emph{(2) Give a complete list of genuine quasi-Frobenius-Lusztig kernels.}

 \emph{(3) How can one determine whether a given finite dimensional quasi-Hopf algebra $H$ over $\k$ is twist equivalent to some Hopf algebra or not?}
\end{problem}

\section*{Acknowledgments}
The first author would like thank Department of Mathematics, University
 of Antwerp for its hospitality. This work is supported by the NSF of China (No. 11071111) and by a FWO grant.

\end{document}